\def\section{\@startsection{section}{1}\z@{.9\linespacing\@plus\linespacing}%
  {.7\linespacing} {\fontsize{13}{15}\selectfont\scshape\centering}}
\def\paragraph{\@startsection{paragraph}{4}%
  \z@{0.3em}{-.5em}%
  {$\bullet$ \ \normalfont\itshape}}
\newtheorem{theo}{Theorem}[section]
\newtheorem{prop}[theo]{Proposition}
\newtheorem{lem}[theo]{Lemma}
\newtheorem{cor}[theo]{Corollary}
\theoremstyle{definition}
\newtheorem{notation}[theo]{Notation}
\theoremstyle{remark}
\newtheorem{rem}[theo]{Remark}
\newcommand\got[1]{{\bm{\mathfrak{#1}}}}
\definecolor{gr}{rgb}   {0.,   0.69,   0.23 }
\definecolor{bl}{rgb}   {0.,   0.5,   1. }
\definecolor{mg}{rgb}   {0.85,  0.,    0.85}
\definecolor{yl}{rgb}   {0.8,  0.7,   0.}
\definecolor{webred}{rgb}{0.75,0,0}
\definecolor{webgreen}{rgb}{0,0.75,0}
\renewcommand{\d}{\, {\rm d}}
\newcommand{\R}{\mathbb{R}}
\newcommand{\Rp}{\R_{+}}
\newcommand{\Hplow}{\mathcal{H}^{-}}
\newcommand{\Hslow}{\Pi^{-}}
\newcommand{\Hpup}{\mathcal{H}^{+}}
\newcommand{\Hsup}{\Pi^{+}}
\newcommand{\Hs}{\Pi}
\newcommand{\spectre}{\lambda}
\newcommand{\spec}{\got{S}}
\newcommand{\specess}{\got{S}_{\mathrm{ess}}}
\newcommand{\dom}{\operatorname{Dom}}
\newcommand{\supp}{\operatorname{Supp}}
\newcommand{\curl}{\operatorname{curl}}
\newcommand{\dist}{\operatorname{dist}}
\newcommand{\sinc}{\operatorname{sinc}}
\newcommand{\Add}{{\bf A}^{\parallel}}
\newcommand{\uB}{{\bf B}^{\perp}}
\newcommand{\bB}{{\bf B}}
\newcommand{\bA}{{\bf A}}
\newcommand{\bt}{{\bf t}}
\newcommand{\dS}{{\mathbb{S}}}
\newcommand\cC{\mathcal{C}}
\newcommand\cW{\mathcal{W}}
\newcommand\cS{\mathcal{S}}
\newcommand\cE{\mathcal{E}}
\newcommand\cF{\mathcal{F}}
\renewcommand\gg{\got{g}}
\newcommand\gq{\got{q}}
\newcommand{\mudg}{\mu}
\newcommand{\Secteur}{\mathcal{S}}
\newcommand{\Los}{\rm R}
\newcommand{\sse}{s}
\newcommand{\ssess}{s_{\, \rm ess}}
\newcommand{\FQ}{\mathcal{Q}}
\newcommand{\FQpol}{\widetilde{\mathcal{Q}}}
\newcommand{\Vpol}{\widetilde{V}}
\newcommand{\sinf}{\sse^{\infty}}
\title[Magnetic Laplacian on wedges]{\large The model magnetic Laplacian on wedges}
\author{Nicolas Popoff}
\address{Laboratoire IRMAR, UMR 6625 du CNRS, Campus de Beaulieu, 35042 Rennes cedex, France
\\
 {\it E-mail address:} nicolas.popoff@univ-rennes1.fr}
\date{\today}
\begin{document}
\begin{abstract}
We study a model Schr\"odinger operator with constant magnetic field on an infinite wedge with natural boundary conditions. This problem is related to the semiclassical magnetic Laplacian on 3d domains with edges. We show that the ground energy is lower than the one coming from the regular part of the wedge and is continuous with respect to the geometry. We provide an upper bound for the ground energy on wedges of small opening. Numerical computations enlighten the theoretical approach.
\end{abstract} 
\maketitle

\section{Introduction}

\subsection{The magnetic Laplacian on model domains}
\label{S:motivation}
\paragraph{Motivation from the semiclassical problem}
Let $(-ih\nabla-{\bf A})^2$ be the magnetic Schr\"odinger operator (also called the magnetic Laplacian) on an open simply connected subset $\Omega$ of $\R^3$. The magnetic potential ${\bf A}:\R^3\mapsto \R^3 $ satisfies $\curl {\bf A}={\bf B}$ where ${\bf B}$ is a regular magnetic field and $h>0$ is a semiclassical parameter. For $\Omega$ bounded with Lipschitz boundary, the operator $(-ih\nabla-{\bf A})^2$ assorted with its natural Neumann boundary condition is an essentially self-adjoint operator with compact resolvent. Due to gauge invariance, the spectrum depends on $\bA$ only through the magnetic field $\bB$. We denote by $\spectre({\bf B},\Omega,h)$ the first eigenvalue of $(-ih\nabla-{\bf A})^2$.
   
 Many works have been dedicated to understanding the influence of the geometry (given by the domain $\Omega$ and the magnetic field ${\bf B}$) on the asymptotics of $\spectre({\bf B},\Omega,h)$ and on the localization of the associated eigenfunctions in the semiclassical limit $h\to0$. In dimension 2, let us cite the works \cite{BeSt, LuPan99-2, HeMo01, FouHel06} when $\Omega$ is regular and \cite{Bon06,BonDau06, BoDauMaVial07} when $\Omega$ is polygonal. In dimension 3, the regular case is studied in \cite{LuPan00, HeMo04, Ray3d09}.  When the boundary of $\Omega\subset\R^3$ has singularities, only few particular cases have been treated (\cite{Pan02, PofRay13}). 
  
 In order to find the main term of the asymptotics of $\spectre({\bf B},\Omega,h)$, we are led to study the magnetic Laplacian without semiclassical parameter ($h=1$) on unbounded model domains with a constant magnetic field. More precisely to each point $x\in \overline{\Omega}$ we associate its tangent cone $\Pi_{x}$, for example if $x$ belongs to the regular boundary of $\Omega$, $\Pi_{x}$ is a half-space. We also introduce $\bB_{x}:=\bB(x)$ the magnetic field ``frozen" at $x$ and $\bA_{x}$ an associated linear potential satisfying $\curl \bA_{x}=\bB_{x}$. We define
$$ H({\bf A}_{x}, \, \Pi_{x}):=(-i\nabla-{\bf A}_{x})^2$$
the tangent magnetic Laplacian on the model domain $\Pi_{x}$ with its natural Neumann boundary condition. We denote by
\begin{equation}
\label{D:spectremodel}
  E({\bf B}_{x},\Pi_{x}) \ \ \mbox{ the bottom of the spectrum of} \ \ H({\bf A}_{x}, \, \Pi_{x}) \ .
\end{equation}
When $\Omega$ is polyhedral \footnote{This analysis has its interest for a more general class of corner domains described for example in \cite[Section 2]{Dau88}.} and
 if the magnetic field does not vanish, we expect that $\spectre({\bf B},\Omega,h)$ behaves like $h \inf_{x\in\overline{\Omega}}E({\bf B}_{x},\Pi_{x})$ at first order when $h\to0$. A work with M. Dauge and V. Bonnaillie-No\"el is in progress to prove this rigorously with an estimate of the remainder (\cite{BoDauPof13}). Therefore we are led to find the points $x\in \overline{\Omega}$ whose tangent model problem minimizes $E(\bB_{x},\Pi_{x})$, in particular it is natural to investigate to continuity properties of $x\mapsto E(\bB_{x},\Pi_{x})$ on $\overline{\Omega}$. It is known that the restriction of this application to the regular part of the boundary of $\Omega$ is continuous. When $x$ is in the singular part of the boundary of $\Omega$ (edge or corner), it has already been proved for particular cases that $E(\bB_{x},\Pi_{x})$ can be strictly lower that the ground energy of model problems associated to  regular points close to $x$ (faces). In this article we investigate the model problems associated to edges: When $x$ belongs to an edge, we compare $E(\bB_{x},\Pi_{x})$ and the other spectral model quantities in a neighborhood of $x$.

\paragraph{The magnetic Laplacian on wedges}
The tangent cone to an edge is an infinite wedge. Let us denote by $(x_{1},x_{2},x_{3})$ the cartesian coordinates of $\R^3$. Let $\alpha\in(0,\pi)\cup(\pi,2\pi)$ be the opening angle, we denote by $\cW_{\alpha}$ the model wedge of opening $\alpha$: 
\begin{equation}
\label{D:Walpha}
\cW_{\alpha}:=\Secteur_{\alpha}\times \R 
\end{equation}
where 
$\Secteur_{\alpha}$ is the infinite sector defined by $\{ (x_{1},x_{2})\in \R^2, \, |x_{2}| \leq x_{1}\tan\tfrac{\alpha}{2}\}$ when $\alpha\in (0,\pi)$ and $\{ (x_{1},x_{2})\in \R^2, \, |x_{2}| \geq x_{1}\tan\tfrac{\alpha}{2} \}$ when $\alpha\in (\pi,2\pi)$. We extend these notations by using $\cW_{\pi}$ (respectively $\cS_{\pi}$) for the model half-space (respectively the model half-plane). For $\alpha\neq \pi$ the $x_{3}$-axis defines the edge of $\cW_{\alpha}$.

 Due to an elementary scaling, it is sufficient to consider unitary magnetic fields when dealing with tangent model problems, therefore in the following the magnetic field will always be constant and unitary. Let $\bB\in \dS^2$ and $\bA$ be an associated linear potential satisfying $\curl \bA=\bB$. In this article we investigate the bottom of the spectrum of the operator $H(\bA,  \cW_{\alpha})$ and the influence of the geometry given by $(\bB,\alpha)$. This operator has already been introduced in particular cases (see subsection \ref{S:statesoftheart}). Our results recover some of these particular cases in a more general context.

\subsection{Problematics}
\label{SS:modelproblems}
\paragraph{Singular chains}
For $\alpha\neq \pi$, the wedge $\cW_{\alpha}$ is a cone of $\R^3$ with singular chains (also called {\it strata}) corresponding to its structure far from its edge (see \cite{MazyaPlamenevskii77} or \cite{Dau88}). There are three singular chains: The half-space $\Hsup_{\alpha}$ corresponding to the upper face, the half-space  $\Hslow_{\alpha}$ corresponding to the lower face and the space $\R^3$ corresponding to interior points. When $\alpha\in(0,\pi)$ (convex case) we have $\Hsup_{\alpha}=\{ (x_{1},x_{2},x_{3})\in \R^3, \, x_{2} \leq x_{1}\tan\tfrac{\alpha}{2} \}$ and $\Hslow_{\alpha}=\{ (x_{1},x_{2},x_{3})\in \R^3, \, x_{2} \geq -x_{1}\tan\tfrac{\alpha}{2} \}$. Similar expressions can be found for $\alpha\in(\pi,2\pi)$ (non convex case). 

When the model domain is a half-space ($\alpha=\pi$), there is only one singular chain: The whole space $\R^3$. The bottom of the spectrum of the magnetic Laplacian on $\R^3$ is well known: 
\begin{equation}
\label{E:landau}
E(\bB,\R^3)=1 \ . 
\end{equation} 
For $\alpha\neq\pi$ we introduce the spectral quantity 
\begin{equation}
\label{D:lambdastar}
E^{*}(\bB,\cW_{\alpha}):=\min\left\{E(\bB,\Hsup_{\alpha}),E(\bB,\Hslow_{\alpha}),E(\bB,\R^3)\right\} \ . 
\end{equation}
When $\alpha=\pi$, we let $E^{*}(\bB,\cW_{\pi}):=E(\bB,\R^3)=1$.
\paragraph{The operator on half-spaces}
Before describing the meaning of $E^{*}$, we recall known result about the magnetic Laplacian on half-spaces and we exhibit the influence of the geometry on $E^{*}(\bB,\cW_{\alpha})$ . Let $\Hs\subset \R^3$ be a half-space. The bottom of the spectrum of the magnetic Laplacian on $\Hs$ depends only on the unoriented angle between the magnetic field $\bB$ and the boundary of $\Hs$. We denote by $\theta\in [0,\frac{\pi}{2}]$ this angle. We denote by $\sigma(\theta)$ the bottom of the spectrum of the operator $H(\bA,\Hs)$. This function has already been studied in \cite{LuPan00}, \cite{HeMo02} or more recently \cite{BoDauPopRay12}. In particular $\theta\mapsto \sigma(\theta)$ is increasing over $[0,\frac{\pi}{2}]$ with $\sigma(0)=\Theta_{0}$ and $\sigma(\frac{\pi}{2})=1$ (see \cite{LuPan00}) where the universal constant $\Theta_{0}\approx 0.59$ is a spectral quantity associated to a unidimensional operator on a half-axis (see \cite{SaGe63, BolHe93, DauHe93} and Subsection \ref{SS:modelproblemregular}). 

Let us denote by $\theta^{+}$ (respectively $\theta^{-}$) the unoriented angle between the magnetic field $\bB$ and $\Hsup_{\alpha}$ (respectively $\Hslow_{\alpha}$). We have $E(\bB,\Hsup_{\alpha})=\sigma(\theta^{+})$, $E(\bB,\Hslow_{\alpha})=\sigma(\theta^{-})$ and $E(\bB,\R^3)=1$. Since $\sigma$ is increasing we get 
\begin{equation}
\label{E:lambdastarexplicit}
E^{*}(\bB,\cW_{\alpha})=\sigma(\min\{\theta^{+},\theta^{-}\}) \ . 
\end{equation}

\paragraph{Main goals and consequences}
When $\alpha\neq \pi$, the quantity $E^{*}(\bB,\cW_{\alpha})$ can be interpreted as the lowest energy of the magnetic Laplacian far from the $x_{3}$-axis. Remind the semiclassical problem on a bounded domain $\Omega$ with edges described in Subsection \ref{S:motivation}. If $x\in \overline{\Omega}$ belongs to an edge and if $\cW_{\alpha}$ is the tangent cone to $\Omega$ at $x$, the quantity $E^{*}(\bB,\cW_{\alpha})$ corresponds to the lowest energy among the $E(\bB_{y},\Pi_{y})$ for all regular points $y$ near $x$.  In this article we prove
\begin{equation}
\label{E:ineqwedge}
\forall \alpha\in (0,2\pi), \quad E(\bB,\cW_{\alpha}) \leq E^{*}(\bB,\cW_{\alpha}) \ ,
\end{equation}
roughly speaking that means that the ground energy associated to an edge is lower than the one of regular adjacent model problems.
\begin{rem}
When $\alpha=\pi$, we have $\theta^{-}=\theta^{+}=\theta$ and $E(\bB,\cW_{\pi})=\sigma(\theta)$. Since $\sigma(\theta)\leq1$ with equality if and only if $\theta=\frac{\pi}{2}$, we notice that \eqref{E:ineqwedge} is already known for $\alpha=\pi$ with equality if and only if $\bB$ is normal to the boundary of the half-space $\cW_{\pi}$.
\end{rem}

 Relation \eqref{E:ineqwedge} may either be strict or be an equality. It has been shown on examples that both cases are possible, see Subsection \ref{S:statesoftheart}. When $\eqref{E:ineqwedge}$ is strict the singularity makes the energy lower than in the regular cases close to the edge, moreover this would bring more precise asymptotics and localization properties for the lowest eigenpairs of the magnetic Laplacian in the semiclassical limit (see \cite{Pan02}, \cite{Bon06}, \cite{BonDau06} and \cite{PofRay13}). In Section \ref{S:HL} we will give a generic geometrical condition for which \eqref{E:ineqwedge} is strict.

As we will see, the operator $H(\bA, \cW_{\alpha})$ is fibered and reduces to a family of 2d operators after Fourier transform along the axis of the wedge. We will Link $E^{*}(\bB,\cW_{\alpha})$ and spectral quantities associated to the reduced operator family. As a consequence when the inequality \eqref{E:ineqwedge} is strict, this will provide existence of {\em generalized} eigenpairs for $H(\bA, \cW_{\alpha})$ with energy $E(\bB,\cW_{\alpha})$, moreover these generalized eigenfunctions are localized near the edge (see Corollary \ref{C:generalizedef}). 

We will also prove the continuity of $(\bB,\alpha)\mapsto E(\bB,\cW_{\alpha})$ on $\dS^2\times (0,2\pi)$. Let us remark that the continuity is proven even for the degenerate case $\alpha=\pi$. When $\Omega$ is polyhedral, this fact and \eqref{E:ineqwedge} are key ingredients in order to prove that $x\mapsto E(\bB_{x},\Pi_{x})$ is lower semi-continuous on $\overline{\Omega}$ (see \cite{BoDauPof13}) in order to find a minimizer for $E(\bB_{x},\Pi_{x})$ on $\overline{\Omega}$. 

\subsection{State of the art on wedges}
\label{S:statesoftheart}
The model operator on infinite wedges has already been explored for particular cases:

 In \cite{Pan02}, X. B. Pan studies the case of wedges of opening $\frac{\pi}{2}$ and applies his results to the semiclassical problem on a cuboid. In particular he shows that the inequality \eqref{E:ineqwedge} is strict if the magnetic field is tangent to a face of the wedge but not to the axis. These results can hardly be extended to the general case.
 
 The case of the magnetic field $\bB_{0}:=(0,0,1)$ tangent to the edge reduces to a magnetic Laplacian on the sector $\cS_{\alpha}$. This case is studied in \cite{Bon06} (see also \cite{Ja01} for $\alpha=\frac{\pi}{2}$): There holds $E^{*}(\bB,\cW_{\alpha})=\sigma(0)=\Theta_{0}$ and it is proved that the inequality \eqref{E:ineqwedge} is strict at least for $\alpha\in (0,\frac{\pi}{2}]$. V. Bonnaillie shows in particular that $E(\bB,\cW_{\alpha})\sim\frac{\alpha}{\sqrt{3}}$ when $\alpha\to0$ and gives a complete expansion of $E(\bB,\cW_{\alpha})$ in power of $\alpha$. 
 
  In \cite{Pof13T}, the author considers a magnetic field tangent to a face of the wedge. In that case he proves \eqref{E:ineqwedge} with $E^{*}(\bB,\cW_{\alpha})=\Theta_{0}$. He shows that the inequality \eqref{E:ineqwedge} is strict for $\alpha$ small enough but he also exhibits cases of equality. 
  
  In \cite{PofRay13}, the authors deal with a magnetic field normal to the plane of symmetry of the wedge and show that \eqref{E:ineqwedge} is strict at least for $\alpha$ small enough.

The results of this article recover these particular cases and give a more general approach about the model problem on wedges. 

\subsection{Organization of the article}
In Section \ref{S:reductiontosector} we reduce the operator $H(\bA,\cW_{\alpha})$ to a family of fibers on the sector $\cS_{\alpha}$. In Section \ref{S:link}, we link the problem on the wedge with model operators on half-spaces corresponding to the two faces and we deduce \eqref{E:ineqwedge}. In section \ref{S:continuity} we prove that $E(\bB,\cW_{\alpha})$ is continuous with respect to the geometry given by $(\bB,\alpha)\in \dS^2\times (0,2\pi)$. In Section \ref{S:UbSa} we use a 1d operator to construct quasimodes for $\alpha$ small and we exhibit cases where the inequality \eqref{E:ineqwedge} is strict. In Section \ref{S:numerique} we give numerical computation of the eigenpairs of the reduced operator on the sector.

\section{From the wedge to the sector}
\label{S:reductiontosector}
\subsection{Reduction to a sector}
\label{Reductiontoasector}
Due to the symmetry of the problem (see \cite[Proposition 3.14]{Popoff} for the detailed proof) we have the following:
\begin{prop} 
\label{P:symmetry}
Let $\bB=(b_{1},b_{2},b_{3})$ be a constant magnetic field and $\bA$ an associated potential. The operator $H(\bA, \cW_{\alpha})$ is unitary equivalent to $H({\widetilde{\bA}},\cW_{\alpha})$ where $\widetilde{\bA}$ satisfies $\curl \widetilde{\bA}=(|b_{1}|,|b_{2}|,|b_{3}|)$. 
\end{prop}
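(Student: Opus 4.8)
The plan is to realize $H(\widetilde{\bA},\cW_\alpha)$ (up to a final gauge transform) as the image of $H(\bA,\cW_\alpha)$ under a combination of the metric symmetries of the wedge and of complex conjugation, each of which preserves the natural Neumann form domain $H^1(\cW_\alpha)$ and acts in a controlled way on the constant field $\bB$. First I would record the two reflections leaving $\cW_\alpha=\Secteur_\alpha\times\R$ invariant. Since $\Secteur_\alpha$ is symmetric about the $x_1$-axis, the reflection $R_2:=\mathrm{diag}(1,-1,1)$ across $\{x_2=0\}$ preserves $\Secteur_\alpha$, hence $\cW_\alpha$; by the product structure the reflection $R_3:=\mathrm{diag}(1,1,-1)$ across $\{x_3=0\}$ preserves $\cW_\alpha$ as well. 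For any orthogonal $R$ with $R(\cW_\alpha)=\cW_\alpha$, the map $(U_R f)(x):=f(Rx)$ is unitary on $L^2(\cW_\alpha)$, carries $H^1(\cW_\alpha)$ and the Neumann form domain onto themselves, and pulling $-i\nabla$ through $U_R$ gives
\begin{equation*}
 U_R^{-1}\,H(\bA,\cW_\alpha)\,U_R=H(\bA_R,\cW_\alpha),\qquad \bA_R:=R\,(\bA\circ R^{-1}),\qquad \curl\bA_R=(\det R)\,R\,\bB .
\end{equation*}

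The curl transformation law above is exactly that of a pseudovector, and this is the source of the one genuine subtlety. Applied to $R_2,R_3$ (both of determinant $-1$) it yields $R_2:\bB\mapsto(-b_1,b_2,-b_3)$, $R_3:\bB\mapsto(-b_1,-b_2,b_3)$ and $R_2R_3:\bB\mapsto(b_1,-b_2,-b_3)$. Thus the four metric symmetries $\{\Id,R_2,R_3,R_2R_3\}$ realise precisely the sign changes of $(b_1,b_2,b_3)$ that affect an \emph{even} number of components; no orthogonal transformation preserving $\cW_\alpha$ can flip a single component, since that would force a diagonal sign matrix of determinant $-1$ to be realised as $(\det R)R$ with $R$ itself a sign matrix, which is impossible.

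To overcome this I would bring in complex conjugation $\cC f:=\overline{f}$, which is antiunitary and preserves $H^1(\cW_\alpha)$. As $\bA$ is real, $\cC\,(-i\nabla-\bA)\,\cC=-(-i\nabla+\bA)$, whence $\cC\,H(\bA,\cW_\alpha)\,\cC=H(-\bA,\cW_\alpha)$, corresponding to the simultaneous reversal $\bB\mapsto-\bB$ of all three components, i.e.\ an \emph{odd} sign change. Composing $\cC$ with the metric symmetries therefore produces every one of the $2^3$ sign patterns of $(b_1,b_2,b_3)$; in particular we can reach the field $(|b_1|,|b_2|,|b_3|)$. The resulting intertwining is unitary when an even number of the $b_i$ are negative and antiunitary otherwise, but in both cases it maps the self-adjoint operator to a self-adjoint operator with the \emph{same} spectrum, which is all that is used in the sequel.

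Finally I would close the argument by gauge invariance: the transformation above produces some linear potential whose curl is $(|b_1|,|b_2|,|b_3|)$, and any two linear potentials sharing this curl differ by a gradient, so the corresponding operators are intertwined by multiplication by a unit-modulus function $e^{i\phi}$ (with $\phi$ a quadratic polynomial), which is unitary and preserves the Neumann form domain. This identifies the operator with $H(\widetilde{\bA},\cW_\alpha)$ and finishes the proof. I expect the only non-formal point to be exactly the pseudovector obstruction: geometry alone flips components in pairs, so flipping a lone negative component genuinely requires the antiunitary conjugation $\cC$; the remaining ingredients (invariance of $\cW_\alpha$ and of its boundary, hence of the Neumann condition and of the form domain) are routine.
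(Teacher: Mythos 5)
Your strategy --- realizing the sign changes of $\bB$ through the isometries of the wedge, complex conjugation, and a final gauge transformation --- is the same symmetry argument the paper relies on: the paper itself gives no proof of Proposition \ref{P:symmetry}, deferring to \cite[Proposition~3.14]{Popoff} with the phrase ``due to the symmetry of the problem'', and your write-up is a correct reconstruction of that argument. The load-bearing computations are right: the pseudovector law $\curl \bA_{R}=(\det R)\,R\,\bB$ for $\bA_{R}=R(\bA\circ R^{-1})$, the fact that the wedge symmetries $\{\Id,R_{2},R_{3},R_{2}R_{3}\}$ realize exactly the \emph{even} sign patterns of $(b_{1},b_{2},b_{3})$, the identity $\cC H(\bA,\cW_{\alpha})\cC=H(-\bA,\cW_{\alpha})$ supplying the odd ones, and the closing gauge step (legitimate since $\cW_{\alpha}$ is simply connected, so two potentials with the same constant curl differ by a gradient).

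There is, however, one gap between what you prove and what the proposition asserts. When an odd number of the $b_{i}$ are negative your intertwiner is antiunitary, and you settle for the observation that it preserves the spectrum, ``which is all that is used in the sequel''. That is weaker than the stated conclusion of \emph{unitary} equivalence. The repair is standard and should be stated: an antiunitary equivalence between self-adjoint operators can always be upgraded to a unitary one. Indeed, by the spectral theorem $H(\bA,\cW_{\alpha})$ is unitarily equivalent to multiplication by a real function on some $L^{2}$ space, so there exists an antiunitary involution $\cC_{0}$ (complex conjugation in that representation) commuting with it; if $\cK$ denotes your antiunitary intertwiner, then $\cK\cC_{0}$ is unitary (a product of two antilinear isometries is linear) and intertwines the same pair of operators, which proves the proposition as stated. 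Two further cosmetic points: the form domain of $H(\bA,\cW_{\alpha})$ is the magnetic space $\{u\in L^{2}(\cW_{\alpha}):(-i\nabla-\bA)u\in L^{2}(\cW_{\alpha})\}$ rather than $H^{1}(\cW_{\alpha})$ (the linear potential is unbounded); what your maps actually do is carry the form domain of $H(\bA_{R},\cW_{\alpha})$ onto that of $H(\bA,\cW_{\alpha})$ while preserving the values of the quadratic forms, which is what the argument needs. And your side remark classifying the symmetries of $\cW_{\alpha}$ as sign matrices is unproven as written (one should first note that any isometry of $\cW_{\alpha}$ must preserve the edge, hence splits along the $x_{3}$-axis and its orthogonal plane), but since it only motivates the need for $\cC$ and is not used, this does not affect the proof.
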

Therefore we can restrict ourselves to the case $b_{i}\geq0$.

We assume that the magnetic potential ${\bf A}=(a_{1},a_{2},a_{3})$ satisfies $\curl {\bf A}={\bf B}$ and the magnetic Schr\"odinger operator writes: 
$$H({\bf A}, \, \cW_{\alpha})=\sum_{j=1}^{3}(D_{x_{j}}-a_{j})^2$$
with $D_{x_{j}}=-i\partial_{x_{j}}$. Due to gauge invariance, the spectrum of $H({\bf A}, \, \cW_{\alpha})$ does not depend on the choice of ${\bf A}$ as soon as it satisfies $\curl {\bf A}={\bf B}$. Moreover we can choose $\bA$ independent of the $x_{3}$ variable. The magnetic field will be chosen explicitly later, see \eqref{D:A}.

We denote by $\spec(P)$ (respectively $\specess(P)$) the spectrum (respectively the essential spectrum) of an operator $P$. Due to the invariance by translation in the $x_{3}$-variable, the spectrum of $H({\bf A}, \, \cW_{\alpha})$ is absolutely continuous and we have $\spec(H({\bf A}, \, \cW_{\alpha}))=\specess(H({\bf A}, \, \cW_{\alpha}))$.

\subsubsection{Partial Fourier transform}
 Let $\tau\in \R$ be the Fourier variable dual to $x_{3}$ and $\cF_{x_{3}}$ the associated Fourier transform. We recall that $\bA$ has been chosen independent of the $x_{3}$ variable and we introduce the operator
 $$\widehat{H}^{\tau}(\bA,\cW_{\alpha}):=(D_{x_{1}}-a_{1})^2+(D_{x_{2}}-a_{2})^2+(a_{3}-\tau)^2 $$
 acting on $L^2(\cS_{\alpha})$ with natural Neumann boundary condition. We have the following direct integral decomposition (see \cite{ReSi78}): 
\begin{equation}
\label{E:decompintdir1}
\cF_{x_{3}}H({\bf A},\, \cW_{\alpha})\cF_{x_{3}}^{*} =\int_{\tau\in\R}^{\bigoplus} \widehat{H}^{\tau}(\bA,\cW_{\alpha}) \d \tau \ .
\end{equation}
The operator $H({\bf A},\, \cW_{\alpha})$ is a fibered operator (see \cite{GeNi98}) whose fibers are the 2d operators $\widehat{H}^{\tau}(\bA,\cW_{\alpha})$ with $\tau\in \R$. Let 
 $$\sse({\bf B},\cS_{\alpha};\tau):=\inf\spec(\widehat{H}^{\tau}(\bA,\cW_{\alpha}))$$ 
be the bottom of the spectrum of $\widehat{H}^{\tau}(\bA,\cW_{\alpha})$, also called the \em band function.\rm 
Thanks to \eqref{E:decompintdir1} we have the following fundamental relation: 
\begin{equation}
\label{E:relationfond}
E({\bf B},\cW_{\alpha})=\inf_{\tau\in\R} \sse({\bf B},\cS_{\alpha};\tau) \ .
\end{equation}
As a consequence we are reduced to study the spectrum of a 2d family of Schr\"odinger operators. We denote by
$$ \ssess(\bB,\cS_{\alpha};\tau):=\inf \specess(\widehat{H}^{\tau}(\bA,\cW_{\alpha})) \  $$
the bottom of the essential spectrum.

\subsubsection{Description of the reduced operator}
We write $$\bB=\uB+\bB^{\parallel}$$
where $\uB=(b_{1},b_{2},0)$ and $\bB^{\parallel}=(0,0,b_{3})$. We take for the magnetic potential
\begin{equation}
\label{D:A}
{\bf A}(x_{1},x_{2},x_{3})=(\Add(x_{1},x_{2}),a^{\perp}(x_{1},x_{2}))
\end{equation}
 with $\Add(x_{1},x_{2}):=(0,b_{3}x_{1})$ and $a^{\perp}(x_{1},x_{2})=x_{2}b_{1}-x_{1}b_{2}$. The magnetic potentiel $\bA$ is linear, does not depend on $x_{3}$ and satisfies $\curl \bA=\bB$. We introduce the reduced electric potential on the sector: 
$$  V_{\uB}^{\tau}(x_{1},x_{2}):=(x_{1}b_{2}-x_{2}b_{1}-\tau)^2 \ .$$
We have 
\begin{equation}
\label{}
\widehat{H}_{\tau}(\bA,\cW_{\alpha})=H(\Add,\cS_{\alpha})+V_{\uB}^{\tau} \ .
\end{equation}
The quadratic form of $H(\Add,  \Secteur_{\alpha})+ V_{\uB}^{\tau}$ is 
$$ \FQ_{\bB,\alpha}^{\tau}(u):=\int_{\Secteur_{\alpha}}|(-i\nabla-\Add)u|^2+V_{\uB}^{\tau}|u|^2 \d x_{1} \d x_{2}  $$
defined on the form domain
\begin{equation}
\label{D:DomFQ}
\dom(\FQ_{\bB,\alpha}^{\tau})= \{u\in L^2(\Secteur_{\alpha}), \, (-i\nabla-\Add)u\in L^2(\Secteur_{\alpha}), \, |x_{1}b_{2}-x_{2}b_{1}-\tau|u\in L^2(\Secteur_{\alpha}) \}  \ .
\end{equation}
The form domain coincides with: 
$$\{u\in L^2(\Secteur_{\alpha}), \, (-i\nabla-\Add)u\in L^2(\Secteur_{\alpha}), \, |x_{1}b_{2}-x_{2}b_{1}|u\in L^2(\Secteur_{\alpha}) \} \ , $$
therefore it does not depend on $\tau$. Kato's perturbation theory (see \cite{kato}) provides the following:
\begin{prop}
The function $\tau\mapsto\sse({\bf B},\cS_{\alpha};\tau)$ is continuous on $\R$. 
\end{prop}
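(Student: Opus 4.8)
The plan is to run the min-max characterization of the band function in tandem with the fact, already recorded above, that the form domain $\dom(\FQ^{\tau}_{\bB,\alpha})=:\mathcal{D}$ is independent of $\tau$. Expanding the reduced potential,
\begin{equation*}
V^{\tau}_{\uB}=(x_{1}b_{2}-x_{2}b_{1})^{2}-2\tau(x_{1}b_{2}-x_{2}b_{1})+\tau^{2},
\end{equation*}
shows that for each fixed $u\in\mathcal{D}$ the map $\tau\mapsto\FQ^{\tau}_{\bB,\alpha}(u)$ is a polynomial of degree two, hence holomorphic. Together with the fact that the forms are closed, symmetric and bounded below by $0$ on the common domain $\mathcal{D}$, this places us exactly in the setting of a holomorphic family of type (B) in the sense of Kato. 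The conceptual backbone is therefore Kato's perturbation theory, which gives that the resolvent of $\widehat{H}^{\tau}(\bA,\cW_{\alpha})$ depends holomorphically on $\tau$ away from the spectrum.

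The one point demanding care is that $\sse(\bB,\cS_{\alpha};\tau)$ need not be an isolated eigenvalue: it may belong to the essential spectrum, so the classical analyticity of a discrete eigenvalue branch does not apply to it directly. To reach continuity of the infimum of the whole spectrum I would establish upper and lower semicontinuity separately by min-max. For upper semicontinuity, fix $\tau_{0}$ and $\varepsilon>0$, choose a normalized quasi-minimizer $u_{\varepsilon}\in\mathcal{D}$ with $\FQ^{\tau_{0}}_{\bB,\alpha}(u_{\varepsilon})\leq\sse(\bB,\cS_{\alpha};\tau_{0})+\varepsilon$, and use the continuity of $\tau\mapsto\FQ^{\tau}_{\bB,\alpha}(u_{\varepsilon})$ to conclude $\limsup_{\tau\to\tau_{0}}\sse(\bB,\cS_{\alpha};\tau)\leq\sse(\bB,\cS_{\alpha};\tau_{0})$.

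For lower semicontinuity the key is a uniform relative bound. Writing $w:=x_{1}b_{2}-x_{2}b_{1}$ and $\delta:=\tau-\tau_{0}$, a direct computation gives, for normalized $u\in\mathcal{D}$,
\begin{equation*}
\FQ^{\tau_{0}}_{\bB,\alpha}(u)-\FQ^{\tau}_{\bB,\alpha}(u)=\delta\int_{\Secteur_{\alpha}}\bigl(2(w-\tau)+\delta\bigr)|u|^{2}.
\end{equation*}
Since the magnetic contribution is nonnegative we have the a priori bound $\|(w-\tau)u\|^{2}\leq\FQ^{\tau}_{\bB,\alpha}(u)$, so by Cauchy--Schwarz the right-hand side is controlled in absolute value by $|\delta|\bigl(2\,\FQ^{\tau}_{\bB,\alpha}(u)^{1/2}+|\delta|\bigr)$. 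Applying this to a quasi-minimizer of $\FQ^{\tau}_{\bB,\alpha}$ and using that $\sse(\bB,\cS_{\alpha};\tau)$ stays bounded as $\tau\to\tau_{0}$, one obtains $\sse(\bB,\cS_{\alpha};\tau_{0})\leq\sse(\bB,\cS_{\alpha};\tau)+o(1)$, which is the desired lower semicontinuity. Combined with the previous paragraph this yields continuity at $\tau_{0}$, and $\tau_{0}$ was arbitrary.

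The main obstacle is precisely the essential-spectrum issue: one cannot simply invoke analyticity of an isolated eigenvalue. The relative-boundedness estimate above is what makes the argument robust to this, and it is the only genuinely quantitative ingredient; everything else is a formal application of the type (B) framework.
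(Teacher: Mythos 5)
Your proof is correct. It starts from the same two observations as the paper --- the form domain \eqref{D:DomFQ} is independent of $\tau$, and $\tau\mapsto\FQ_{\bB,\alpha}^{\tau}(u)$ is a quadratic polynomial for each fixed $u$ --- but the paper stops there and simply invokes Kato's perturbation theory, whereas you extract the conclusion by a self-contained variational argument. Two remarks on the comparison. First, your concern that the bottom of the spectrum may be essential is legitimate (the paper itself raises it at the start of Section \ref{S:continuity}), but it does not actually defeat the Kato route for \emph{this} proposition: a self-adjoint holomorphic family of type (B) is norm-resolvent continuous in $\tau$, and since all the operators satisfy $\widehat{H}^{\tau}(\bA,\cW_{\alpha})\geq 0$, continuity of $\inf\spec$ follows for any fixed real $\lambda<0$ from $\dist\big(\lambda,\spec(\widehat{H}^{\tau}(\bA,\cW_{\alpha}))\big)=\|(\widehat{H}^{\tau}(\bA,\cW_{\alpha})-\lambda)^{-1}\|^{-1}$, with no discreteness of the bottom needed; so the paper's terse proof is not lacunary, it just hides this step. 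Second, your replacement of that step --- two-sided semicontinuity via quasi-minimizers together with the relative bound $\|(w-\tau)u\|^{2}\leq\FQ_{\bB,\alpha}^{\tau}(u)$, where $w=x_{1}b_{2}-x_{2}b_{1}$ --- is correct and entirely elementary, and it buys slightly more than stated: both of your estimates are linear in $|\delta|$ with constants controlled by local bounds on $\sse$, so they give local Lipschitz continuity of $\tau\mapsto\sse(\bB,\cS_{\alpha};\tau)$, not just continuity. One detail you should make explicit: the boundedness of $\sse(\bB,\cS_{\alpha};\tau)$ for $\tau$ near $\tau_{0}$, used in the lower-semicontinuity step, follows either from your upper-semicontinuity paragraph or, even more simply, from $\sse(\bB,\cS_{\alpha};\tau)\leq\FQ_{\bB,\alpha}^{\tau}(u_{*})$ for a single fixed normalized $u_{*}$ in the common form domain, the right-hand side being a polynomial in $\tau$; as written it is asserted without justification, but this is a citation issue, not a gap.
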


\subsection{Model problems on regular domain}
\label{SS:modelproblemregular}

We describe here the case $\alpha=\pi$ where $\cW_{\pi}$ is a half-space. The operator $H(\Add,  \cS_{\pi})+V_{\uB}^{\tau}$ can be analyzed using known results about regular domain. We have $E(\bB,\cW_{\pi})=\sigma(\theta)$ (see Subsection \ref{SS:modelproblems} and \cite{HeMo02}) where $\theta\in [0,\frac{\pi}{2}]$ is the angle between the magnetic field and the boundary. We recall that we have $E^{*}(\bB,\cW_{\pi})=1$. 

When $\theta\neq0$, $H(\Add,  \cS_{\pi})+V_{\uB}^{\tau}$ is unitary equivalent to $H(\Add,  \cS_{\pi})+V_{\uB}^{0}$ and
 $\ssess(\bB,\cS_{\pi};0)=1$ (\cite[Proposition 3.4]{HeMo02}). There holds $\sse(\bB,\cS_{\pi};0)=\sigma(\theta)\leq1$. If $\theta\neq \frac{\pi}{2}$, $\sigma(\theta)<1$ and therefore the operator $H(\Add,  \cS_{\pi})+V_{\uB}^{0}$ has an eigenfunction associated to $\sigma(\theta)$ with exponential decay (see \cite{BoDauPopRay12}). 

When $\theta=0$, there holds $\ssess(\bB,\cS_{\pi};\tau)=\sse(\bB,\cS_{\pi};\tau)$. A partial Fourier transform can be performed and shows that $\inf_{\tau\in \R}\sse(\bB,\cS_{\pi};\tau)=\Theta_{0}$.

In Subsection \ref{SS:essential} and Section \ref{S:link} we will focus on $\alpha\in(0,\pi)\cup(\pi,2\pi)$. Most of the results can be compared and extended to $\alpha=\pi$ using the results recalled above.

\subsection{Link between the geometry and the essential spectrum of the reduced problem}
\label{SS:essential}
In this section  we give the essential spectrum of the operator $H(\Add,  \Secteur_{\alpha})+ V_{\uB}^{\tau}$ depending on the geometry.
 Let $\Upsilon:=(V_{\uB}^{\tau})^{-1}(\{0\})$ be the line where the electric potential vanishes. Let us notice that $V_{\uB}^{\tau}(x)$ is the square of the distance from $x$ to $\Upsilon$. Let $(\gamma,\theta)$ be the spherical coordinates of the magnetic field where $\gamma$ is the angle between the magnetic field and the $x_{3}$-axis and $\theta$ is the angle between the projection $(b_{1},b_{2})$ and the $x_{2}$-axis: 
$$ \bB=(\sin\gamma\sin\theta,\sin\gamma\cos\theta,\cos\gamma) \ .$$
Due to symmetries we restrict ourselves to $(\gamma,\theta)\in[0,\frac{\pi}{2}]\times[0,\frac{\pi}{2}]$. We will use the following terminology: 
\begin{itemize}
\item The magnetic field is outgoing if $\alpha\in (0,\pi)$ and $\theta\in[0,\frac{\pi-\alpha}{2})$. 
\item The magnetic field is tangent if either $\gamma=0$ or $\theta=\frac{|\pi-\alpha|}{2}$.
\item The magnetic field is ingoing in the other cases. 
\end{itemize}
The ongoing case corresponds to a magnetic field pointing outward the wedge (this can happen only if the wedge is convex). The tangent case corresponds to a magnetic field tangent to a face of the wedge and has already been explored for convex wedges in \cite{Pof13T}. The ingoing case corresponds to a magnetic field pointing inward the wedge, in that case the intersection between $\Upsilon$ and $\cS_{\alpha}$ is always unbounded. The essential spectrum of $H(\Add,  \Secteur_{\alpha})+ V_{\uB}^{\tau}$ depends on the situation as described below: 
\begin{prop}
Let $\alpha\in(0,\pi)$ and $\bB\in \dS^2$ be an outgoing magnetic field. Then for all $\tau\in\R$ the operator $H(\Add,  \Secteur_{\alpha})+ V_{\uB}^{\tau}$ has compact resolvent.
\end{prop}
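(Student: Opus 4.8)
The plan is to derive compactness of the resolvent from a confinement property of the electric potential $V_{\uB}^{\tau}$. Since the magnetic term in $\FQ_{\bB,\alpha}^{\tau}$ is nonnegative, the operator is bounded below by multiplication by $V_{\uB}^{\tau}$, so it suffices to prove that $V_{\uB}^{\tau}(x)\to+\infty$ as $|x|\to\infty$ inside $\Secteur_{\alpha}$, and then to invoke the standard criterion that a nonnegative magnetic Schr\"odinger form with a confining potential has compact resolvent.

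\emph{First step: the confinement.} Write $x=r(\cos\phi,\sin\phi)$ with $r\ge0$ and $|\phi|\le\frac{\alpha}{2}$, and recall that $b_{1}=\sin\gamma\sin\theta$ and $b_{2}=\sin\gamma\cos\theta$. A direct computation gives
\[
V_{\uB}^{\tau}(x)=\bigl(b_{2}x_{1}-b_{1}x_{2}-\tau\bigr)^{2}=\bigl(r\sin\gamma\cos(\theta+\phi)-\tau\bigr)^{2}.
\]
In the outgoing case $\gamma\neq0$ and $\theta<\frac{\pi-\alpha}{2}$, so for every admissible $\phi$ one has $\theta+\phi\in[\theta-\tfrac{\alpha}{2},\theta+\tfrac{\alpha}{2}]\subset(-\tfrac{\pi}{2},\tfrac{\pi}{2})$, with both endpoints bounded away from $\pm\frac{\pi}{2}$ because $\theta+\frac{\alpha}{2}<\frac{\pi}{2}$ and $\theta-\frac{\alpha}{2}>-\frac{\pi}{2}$ (the latter since $\alpha<\pi$). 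Hence $\cos(\theta+\phi)\ge c_{0}:=\min\{\cos(\theta+\tfrac{\alpha}{2}),\cos(\theta-\tfrac{\alpha}{2})\}>0$ uniformly in $\phi$, and $\sin\gamma>0$, so $V_{\uB}^{\tau}(x)\ge(c_{0}\sin\gamma\,r-|\tau|)^{2}\to+\infty$ as $r\to\infty$. This is the only place where the outgoing hypothesis is used; geometrically it says that the line $\Upsilon$ leaves the sector and that $\dist(x,\Upsilon)\to\infty$ within $\Secteur_{\alpha}$.

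\emph{Second step: confinement yields a compact resolvent.} Let $(u_{n})$ be bounded in the form norm, so that $\|(-i\nabla-\Add)u_{n}\|_{L^2}$, $\int_{\Secteur_{\alpha}}V_{\uB}^{\tau}|u_{n}|^{2}$ and $\|u_{n}\|_{L^2}$ are all bounded. On a bounded truncated sector $\omega_{R}:=\Secteur_{\alpha}\cap\{|x|<R\}$ the potential $\Add=(0,b_{3}x_{1})$ is bounded, so writing $\nabla u_{n}=i\Add u_{n}+i(-i\nabla-\Add)u_{n}$ shows that $(u_{n})$ is bounded in $H^{1}(\omega_{R})$; as $\omega_{R}$ is a bounded Lipschitz domain ($\Secteur_{\alpha}$ being convex for $\alpha<\pi$), Rellich--Kondrachov gives precompactness in $L^{2}(\omega_{R})$. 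The first step provides the tail estimate
\[
\int_{\Secteur_{\alpha}\setminus\omega_{R}}|u_{n}|^{2}\le\frac{1}{m(R)}\int_{\Secteur_{\alpha}}V_{\uB}^{\tau}|u_{n}|^{2}\le\frac{C}{m(R)},\qquad m(R):=\inf_{x\in\Secteur_{\alpha},\,|x|\ge R}V_{\uB}^{\tau}(x)\xrightarrow[R\to\infty]{}+\infty,
\]
uniformly in $n$. A diagonal extraction over an increasing sequence $R\to\infty$, combined with this uniform smallness of the tails, yields a subsequence that is Cauchy in $L^{2}(\Secteur_{\alpha})$. Thus the embedding $\dom(\FQ_{\bB,\alpha}^{\tau})\hookrightarrow L^{2}(\Secteur_{\alpha})$ is compact, whence $H(\Add,\Secteur_{\alpha})+V_{\uB}^{\tau}$ has compact resolvent.

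I expect the genuine content to lie entirely in the \emph{uniformity} of the confinement in the first step: the bookkeeping of angles guaranteeing $c_{0}>0$ and the necessity of $\gamma\neq0$ are exactly where the outgoing geometry enters, and the strictness of $\theta<\frac{\pi-\alpha}{2}$ is what prevents $\cos(\theta+\phi)$ from degenerating. Once $V_{\uB}^{\tau}\to+\infty$ is established, the compactness argument is routine; the only mild technical care is that $\Add$ is merely locally bounded, so the $H^{1}$ estimate must be carried out on the truncated sectors $\omega_{R}$ rather than globally.
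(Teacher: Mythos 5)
Your proof is correct and follows essentially the same route as the paper: the paper simply remarks that $V_{\uB}^{\tau}\to+\infty$ within $\cS_{\alpha}$ (which is exactly your first step, where the outgoing hypothesis enters) and then cites Reed--Simon for the compactness of the embedding of the form domain into $L^{2}(\cS_{\alpha})$, which is your second step carried out by hand. You have merely supplied the details (the angle bookkeeping and the Rellich-plus-tails argument) that the paper delegates to a remark and a reference.
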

\begin{proof}
We remark that 
$$\forall \tau \in \R, \quad \lim_{\underset{(x_{1},x_{2})\in \cS_{\alpha}}{|(x_{1},x_{2})|\to+\infty}}V_{\uB}^{\tau}(x_{1},x_{2})=+\infty \ .$$
This implies that the injection from the form domain \eqref{D:DomFQ} into $L^2(\cS_{\alpha})$ is compact, see for example \cite{ReSi78}. We deduce that the operator $H(\Add,  \Secteur_{\alpha})+ V_{\uB}^{\tau}$ has compact resolvent.
\end{proof}

The following proposition shows that the essential spectrum is much more different when the magnetic field is ingoing:
\begin{prop}
Let $\alpha\in(0,\pi)\cup(\pi,2\pi)$ and $\bB\in \dS^2$ be an ingoing magnetic field. Then 
$$\forall \tau\in \R, \quad \ssess(\bB,\cS_{\alpha};\tau)=1 \ . $$
\end{prop}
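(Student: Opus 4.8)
The strategy is to read off the bottom of the essential spectrum from the behaviour of $H(\Add,\cS_{\alpha})+V_{\uB}^{\tau}$ at infinity. Since $V_{\uB}^{\tau}(x)=\dist(x,\Upsilon)^2$, the electric potential tends to $+\infty$ at infinity in every direction \emph{except} along a tube around the line $\Upsilon$. When $\bB$ is ingoing the line $\Upsilon$ meets $\cS_{\alpha}$ in an unbounded set and, being non-tangent, escapes to infinity in the interior of the sector with $\dist(\,\cdot\,,\partial\cS_{\alpha})\to+\infty$; hence far from the vertex the only channel with bounded potential is an interior tube in which $\cS_{\alpha}$ looks locally like $\R^2$. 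I would therefore show that $\ssess(\bB,\cS_{\alpha};\tau)$ equals the bottom of the whole-plane model $H(\Add,\R^2)+V_{\uB}^{\tau}$, and that this bottom equals $1$. The main tool is a Persson-type characterization of $\ssess$ together with an IMS localization.

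First I would establish that the whole-plane model has bottom $1$, independently of $\tau$. On $\R^2$ the operator $H(\Add,\R^2)+V_{\uB}^{\tau}$ is exactly the fiber over $\tau$ of the Landau operator $H(\bA,\R^3)$ in the analogue of the direct integral decomposition \eqref{E:decompintdir1}; since $\spec\bigl(H(\bA,\R^3)\bigr)=[1,+\infty)$ by \eqref{E:landau}, we get $\inf_{\tau}\inf\spec\bigl(H(\Add,\R^2)+V_{\uB}^{\tau}\bigr)=1$. Because the field is ingoing we have $\gamma\neq0$, so $(b_{1},b_{2})\neq0$, and a translation in the direction normal to $\Upsilon$ shifts $\tau$ to any prescribed value while modifying $\Add$ only by a constant, which is removed by a gauge transformation. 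Thus the whole-plane fibers are mutually unitarily equivalent, their common bottom does not depend on $\tau$, and it therefore equals $1$.

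For the lower bound $\ssess\geq1$, I would use that $\ssess(\bB,\cS_{\alpha};\tau)$ is the limit as $R\to+\infty$ of the infimum of $\FQ_{\bB,\alpha}^{\tau}(u)/\|u\|^2$ over $u\in\dom(\FQ_{\bB,\alpha}^{\tau})$ supported in $\{|x|>R\}$. Fix a large $M$ and split such a $u=u_{1}+u_{2}$ by an IMS partition of unity, with $u_{1}$ supported in the tube $\{\dist(x,\Upsilon)\leq\sqrt{M}\}$ and $u_{2}$ in its complement. On $\supp u_{2}$ one has $V_{\uB}^{\tau}\geq M$, so this piece contributes at least $M\|u_{2}\|^2$. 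For $R$ large enough (depending on $M$), the set $\supp u_{1}\cap\{|x|>R\}$ lies in the interior of $\cS_{\alpha}$, so $u_{1}$ extends by zero to a test function on $\R^2$ and its energy is at least the whole-plane bottom $1$ times $\|u_{1}\|^2$. Adding the two contributions and absorbing the $O(R^{-2})$ localization error, the Rayleigh quotient is at least $1-o_{R}(1)$; letting $R\to+\infty$ gives $\ssess\geq1$.

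For the upper bound $\ssess\leq1$ I would construct a singular Weyl sequence at energy $1$ from a truncated generalized ground state of the whole-plane model, cut off at scale $n$ along $\Upsilon$ and centred at a point of $\Upsilon$ far enough from the vertex that the whole bump lies in the interior; the ingoing hypothesis guarantees that such points exist with transverse distance to $\partial\cS_{\alpha}$ tending to $+\infty$, so the transversally decaying profile stays inside $\cS_{\alpha}$, while the cutoff costs only $O(n^{-2})$ in energy. This yields a normalized sequence tending weakly to $0$ on which $\FQ_{\bB,\alpha}^{\tau}$ tends to $1$, hence $\ssess\leq1$ and equality follows. I expect the main obstacle to be making the localization of the $\Upsilon$-channel uniform: one must quantify, in terms of the non-tangency angle between $\Upsilon$ and the faces, how large $R$ (resp.\ how far along $\Upsilon$) must be taken so that the interior tube is genuinely separated from $\partial\cS_{\alpha}$, and control the transverse decay of the whole-plane ground state accordingly.
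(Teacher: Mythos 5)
Your proposal is correct and takes essentially the same route as the paper: the paper's proof (which defers the details to \cite{Popoff}) rests exactly on Persson's lemma combined with Weyl quasimodes built from the whole-plane operator $H(\Add,\R^2)+V_{\uB}^{\tau}$, whose ground energy is $1$, translated far from the origin along the line $\Upsilon$, which the ingoing hypothesis keeps inside $\cS_{\alpha}$. Your IMS splitting for the lower bound (large potential away from $\Upsilon$, whole-plane comparison in the interior tube) is the standard complement that the paper's sketch leaves implicit.
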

When $\alpha\in (0,\pi)$, the detailed proof can be found in \cite[Subsection 4.2.2]{Popoff}. The proof for $\alpha\in(\pi,2\pi)$ is rigorously the same. The idea is to construct a Weyl's quasimode for $\FQ_{\bB,\alpha}^{\tau}$ far from the origin and near the line $\Upsilon$ using the operator $H(\Add,\R^2)+V_{\uB}^{\tau}$ whose first eigenvalue is 1. The persson's lemma (see \cite{Pers60}) provides the result.

In the tangent case, the essential spectrum depends on the parameters and can be expressed using the first eigenvalue of the classical 1d de Gennes operator (see the proof below). The bottom of the essential spectrum is given explicitly in \eqref{F:infsurxi2} however we will only need the following:
\begin{prop}
\label{P:specess}
Let $\alpha\in(0,\pi)\cup(\pi,2\pi)$ and $\bB\in \dS^2$ be a magnetic field tangent to $\cW_{\alpha}$. Then we have 
$$\inf_{\tau\in\R}\ssess(\bB,\cS_{\alpha};\tau)=\Theta_{0} \ . $$
\end{prop}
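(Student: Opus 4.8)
The plan is to compute $\ssess(\bB,\cS_{\alpha};\tau)$ by localizing $H(\Add,\Secteur_{\alpha})+V_{\uB}^{\tau}$ at infinity, to reduce the relevant end to an explicit one--dimensional family built on the de Gennes operator, and to minimize over $\tau$. First I would analyse the three ends of $\Secteur_{\alpha}$ at infinity, namely the interior bulk and the two boundary half--lines. By a Persson--type argument, as in \cite[Subsection 4.2.2]{Popoff}, $\ssess(\bB,\cS_{\alpha};\tau)$ is the minimum of the energies carried to infinity along these ends. Since $V_{\uB}^{\tau}$ grows quadratically with the distance to the line $\Upsilon$, only the ends along which $V_{\uB}^{\tau}$ stays bounded can carry energy below the bulk value, which is governed by $H(\Add,\R^2)+V_{\uB}^{\tau}$ and equals $1$. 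Under the tangency assumption $\Upsilon$ is parallel to (at least) one face, say the upper one; in the coordinates $(s,t)$ adapted to that face, with $t\geq0$ the distance to the face and $s$ the abscissa along it, $V_{\uB}^{\tau}$ then depends on $t$ only, and up to a translation of $\tau$ it reads $(t\sin\gamma-\tau)^2$.

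Next I would use that the corresponding face model is invariant by translation in $s$. A partial Fourier transform in $s$, with dual variable $\xi$, together with a Landau gauge adapted to $\{t=0\}$, reduce it to the one--dimensional fibers
\begin{equation*}
\got{h}_{\xi,\tau}:=-\partial_{t}^2+(\xi-t\cos\gamma)^2+(t\sin\gamma-\tau)^2 \quad\text{on } L^2(\Rp),
\end{equation*}
with Neumann condition at $0$, so that this end contributes $\inf_{\xi\in\R}\inf\spec(\got{h}_{\xi,\tau})$. Completing the square in $t$ gives
\begin{equation*}
\got{h}_{\xi,\tau}=\dg_{\rho}+(\xi\sin\gamma-\tau\cos\gamma)^2,\qquad \rho:=\xi\cos\gamma+\tau\sin\gamma,
\end{equation*}
where $\dg_{\rho}=-\partial_{t}^2+(t-\rho)^2$ is the de Gennes operator on $L^2(\Rp)$ with Neumann condition, whose lowest eigenvalue $\mudg(\rho)$ satisfies $\inf_{\rho\in\R}\mudg(\rho)=\Theta_{0}$; this computation is uniform in $\gamma\in[0,\frac{\pi}{2}]$, the value $\gamma=0$ being the degenerate subcase where $V_{\uB}^{\tau}$ is constant. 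Hence $\inf\spec(\got{h}_{\xi,\tau})=\mudg(\rho)+(\xi\sin\gamma-\tau\cos\gamma)^2\geq\Theta_{0}$ for every $(\xi,\tau)$, which together with the bulk value $1$ gives $\ssess(\bB,\cS_{\alpha};\tau)\geq\Theta_{0}$ for all $\tau$.

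It then remains to reach this value. Since $(\xi,\tau)\mapsto(\rho,\kappa):=(\xi\cos\gamma+\tau\sin\gamma,\ \xi\sin\gamma-\tau\cos\gamma)$ is an orthogonal change of variables of $\R^2$, one gets
\begin{equation*}
\inf_{\tau\in\R}\inf_{\xi\in\R}\inf\spec(\got{h}_{\xi,\tau})=\inf_{(\rho,\kappa)\in\R^2}\bigl(\mudg(\rho)+\kappa^2\bigr)=\inf_{\rho\in\R}\mudg(\rho)=\Theta_{0}.
\end{equation*}
Concretely, taking $\rho=\rho_{0}$ the minimizer of $\mudg$ and $\kappa=0$, that is $\xi=\rho_{0}\cos\gamma$ and $\tau=\rho_{0}\sin\gamma$, the function $e^{i\xi s}\vpdg(t)$, with $\vpdg$ the de Gennes ground state, is a generalized eigenfunction at energy $\Theta_{0}$ of the face model; cutting it off in $s$ far from the edge of $\Secteur_{\alpha}$ and sliding the cut--off to infinity along the face yields a Weyl sequence, so that $\ssess(\bB,\cS_{\alpha};\rho_{0}\sin\gamma)\leq\Theta_{0}$. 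Combining the two bounds gives $\inf_{\tau\in\R}\ssess(\bB,\cS_{\alpha};\tau)=\Theta_{0}$.

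The delicate point is the first step: justifying that $\ssess$ is exactly the minimum of the contributions of the three ends and that the tangent face is faithfully represented by the $s$--invariant model $\got{h}_{\xi,\tau}$. This requires an IMS partition of unity separating the faces from the bulk with controlled localization errors for the lower bound, and a careful construction of singular Weyl sequences localized near infinity along the tangent face for the upper bound; the transverse quadratic growth of $V_{\uB}^{\tau}$ and the exponential decay of $\vpdg$ make these errors negligible, exactly as for the ingoing case treated in \cite[Subsection 4.2.2]{Popoff}.
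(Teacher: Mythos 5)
Your proof is correct, and its endgame is exactly the paper's: completing the square in $t$, the orthogonal change of variables $(\xi,\tau)\mapsto(\rho,\kappa)$, and the choice $\rho=\xi_{0}$, $\kappa=0$ (i.e.\ $\xi=\xi_{0}\cos\gamma$, $\tau=\xi_{0}\sin\gamma$) reproduce word for word how the paper deduces both $\ssess\geq\Theta_{0}$ and the attainment of $\Theta_{0}$. The genuine difference is upstream: the paper takes the identity \eqref{F:infsurxi2} as a black box from \cite[Proposition 3.6]{Pof13T}, whereas you sketch its derivation (Persson/IMS localization at infinity plus partial Fourier transform along the tangent face), which buys self-containedness at the price of the technical work you defer to your last paragraph. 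One caveat on that sketch: your intermediate characterization of $\ssess(\bB,\cS_{\alpha};\tau)$ as the minimum of the energies of the three ends \emph{with the bulk contributing the value $1$} is not correct in the tangent case. Precisely because $\Upsilon$ is parallel to the tangent face, it stays at bounded distance from that face (or lies outside the sector), so a state localized near $\Upsilon$ far from the origin belongs to the face end, not to the bulk end; there is no Weyl sequence at energy $1$ supported far from both faces. Consistently, \eqref{F:infsurxi2} shows that $\ssess(\bB,\cS_{\alpha};\tau)$ exceeds $1$ for $\tau\to-\infty$ (the infimum over $\xi$ blows up), while your claimed identity would cap it at $1$. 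This does not damage your proof of the proposition, since you use the claim only in the lower-bound direction, $\ssess\geq\min\{1,\inf_{\xi}(\mudg(\rho)+\kappa^{2})\}\geq\Theta_{0}$, and $\Theta_{0}<1$; but it would matter if one wanted the exact formula \eqref{F:infsurxi2} rather than just $\inf_{\tau}\ssess=\Theta_{0}$.
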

\begin{proof}
We introduce the first eigenvalue $\mudg(\tau)$ of the 1d de Gennes operator
$$ -\partial_{t}^2+(t-\tau)^2$$ 
defined on the half-line $\{t>0\}$ with a Neumann boundary condition. This classical spectral quantity has already been investigated, see \cite{SaGe63, BolHe93,DauHe93}.  In particular $\mudg(\tau)$ reaches a unique minimum $\Theta_{0}\approx 0.59$ for $\xi_{0}=\sqrt{\Theta_{0}}$. We recall the result from \cite[Proposition 3.6]{Pof13T}: 
\begin{equation}
\label{F:infsurxi2}
\ssess({\bf B},\cS_{\alpha};\tau)=\inf_{\xi\in\R}\left(\mudg(\xi\cos\gamma+\tau\sin\gamma)+(\xi\sin\gamma-\tau\cos\gamma)^2\right) \ . 
\end{equation}
where $\gamma\in [0,\frac{\pi}{2}]$ is the angle between the magnetic field and the axis of the wedge. Note that the proof of this relation is done in \cite{Pof13T} for $\alpha\in (0,\pi)$ and the extension to $\alpha\in (\pi,2\pi)$ does not need any additional work. We deduce from \eqref{F:infsurxi2} that 
\begin{equation}
\label{E:compcastangentsess}
\forall \tau\in \R, \quad \ssess(\bB,\cS_{\alpha};\tau)\geq \Theta_{0}  \ . 
\end{equation}
Choosing $\xi=\xi_{0}\cos\gamma$ in \eqref{F:infsurxi2} we get $\ssess(\bB,\cS_{\alpha},\xi_{0}\sin\gamma)=\mu(\xi_{0})=\Theta_{0}$ and the proposition is proven.
\end{proof}

\begin{rem}
We have $\sigma(0)=\Theta_{0}$ where the function $\sigma$ is defined in Subsection \ref{SS:modelproblems}.
\end{rem}
Since $\sse(\bB,\cS_{\alpha};\tau) \leq \ssess(\bB,\cS_{\alpha};\tau)$, the relation \eqref{E:relationfond} provides for a tangent magnetic field:
\begin{equation}
\label{E:compcastangent}
\forall \alpha \in (0,2\pi)\setminus \pi, \quad E(\bB,\cW_{\alpha})\leq \Theta_{0} \ . 
\end{equation}
Therefore we have proved \eqref{E:ineqwedge} for a tangent magnetic field.

\section{Link with problems on half-planes}
\label{S:link}
In this section we will investigate the link between the model operator on a wedge of opening $\alpha\in(0,\pi)\cup(\pi,2\pi)$ and the model operator on the half-spaces $\Hsup_{\alpha}$, $\Hslow_{\alpha}$ and the space $\R^3$ (see Subsection \ref{SS:modelproblems}). These domains are the singular chains of $\cW_{\alpha}$. We recall that $E^{*}(\bB,\cW_{\alpha})$ is the lowest energy of the magnetic Laplacian $(-i\nabla-\bA)^2$ acting on these singular chains and is given by 
$$E^{*}(\bB,\cW_{\alpha})=\sigma(\min\{\theta^{+},\theta^{-}\})$$
where $\theta^{\pm}$ is the angle between $\bB$ and $\Pi_{\alpha}^{\pm}$ and $\sigma(\cdot)$ is defined in Subsection \ref{SS:modelproblems}.  In this section we prove the inequality \eqref{E:ineqwedge}. Moreover when this inequality is strict we show that the band function $\tau\mapsto \sse(\bB,\cS_{\alpha};\tau)$ reaches its infimum and that this infimum is a discrete eigenvalue for the reduced operator on the sector.  Let us remark that these questions were investigated in \cite{Pan02} and \cite{Pof13T} for particular cases.
 
We denote by $\Hpup_{\alpha}$ and $\Hplow_{\alpha}$ the half-planes such that $\Hsup_{\alpha}=\R\times \Hpup_{\alpha}$ and $\Hslow_{\alpha}=\R\times \Hplow_{\alpha}$.  Let $H(\Add,  \Hpup_{\alpha})+ V_{\uB}^{\tau}$ be the reduced operator defined on $\Hpup_{\alpha}$ with a Neumann boundary condition. When $\bB$ is not tangent to $\Hsup_{\alpha}$ we deduce from Subsection \ref{SS:modelproblemregular}: 
\begin{equation}
\label{E:infspecPup}
\forall \tau \in \R, \quad \inf \spec(H(\Add,  \Hpup_{\alpha})+ V_{\uB}^{\tau})=\sigma(\theta^{+})
\end{equation}
 Similarly when the magnetic field is not tangent to $\Hslow_{\alpha}$ we have:
\begin{equation}
\label{E:infspecPlow}
\forall \tau \in \R, \quad\inf \spec(H(\Add,  \Hplow_{\alpha})+ V_{\uB}^{\tau})=\sigma(\theta^{-})
\end{equation}

\subsection{Limits for large Fourier parameter}
In this section we investigate the behavior of $\sse(\bB,\cS_{\alpha};\tau)$ when the Fourier parameter $\tau$ goes to $\pm\infty$. We introduce the quantity 
\begin{equation}
\label{D:sinf}
\sinf(\bB,\cS_{\alpha}):=\min\left\{\liminf_{\tau\to-\infty}\sse(\bB,\cS_{\alpha};\tau),\liminf_{\tau\to+\infty}\sse(\bB,\cS_{\alpha};\tau) \right\} \ . 
\end{equation}
In the tangent case, we recall the results from \cite[Section 4]{Pof13T}: 
\begin{prop}
\label{P:liminfTcase}
Let $\alpha\in(0,\pi)\cup(\pi,2\pi)$ and let $\bB\in \dS^2$ be a magnetic field tangent to a face of the wedge $\cW_{\alpha}$. 
Then we have
$$\sinf(\bB,\cS_{\alpha})= \sigma(\max(\theta^{-},\theta^{+}))\ . $$
\end{prop}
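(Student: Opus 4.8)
The plan is to read off each one-sided limit from the geometry of the potential well and reduce it to a half-plane problem. By the reflection symmetry of Proposition~\ref{P:symmetry} and the terminology of Subsection~\ref{SS:essential}, I may assume $\bB$ is tangent to the upper face, so that $\theta^{+}=0$ and $\gamma\in(0,\tfrac{\pi}{2}]$; since $\sigma$ is increasing and $\theta^{+}=0$, the claim becomes $\sinf(\bB,\cS_{\alpha})=\sigma(\theta^{-})$. The decisive observation is that the vanishing line $\Upsilon=\{x_{1}b_{2}-x_{2}b_{1}=\tau\}$ has direction $(b_{1},b_{2})$, which is parallel to the upper face; for $\tau=0$ it is exactly the upper face line, and for $\tau\neq0$ it is the parallel line at distance $|\tau|/\sin\gamma$. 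For the sign of $\tau$ that pushes $\Upsilon$ to the far side of the upper face (outside $\cS_{\alpha}$) one has $V_{\uB}^{\tau}\geq \tau^{2}/\sin^{2}\gamma$ on the whole sector, whence $\sse(\bB,\cS_{\alpha};\tau)\geq \tau^{2}/\sin^{2}\gamma\to+\infty$, so that one-sided liminf in \eqref{D:sinf} is $+\infty$. For the opposite sign, $\Upsilon\cap\cS_{\alpha}$ is a ray meeting the lower face at a point $P_{\tau}$ that recedes to infinity while the well stays at distance $|\tau|/\sin\gamma\to\infty$ from the upper face; the whole analysis reduces to this regime, and $\sinf$ equals the finite liminf coming from it.

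For the upper bound I would transplant a near-ground-state of the lower face. By \eqref{E:infspecPlow} the reduced half-plane operator $H(\Add,\Hplow_{\alpha})+V_{\uB}^{\tau}$ has ground energy $\sigma(\theta^{-})$, with a ground state (or, when $\theta^{-}=\tfrac{\pi}{2}$, a Weyl sequence) concentrated near $\Upsilon\cap\Hplow_{\alpha}$ and decaying exponentially in the transverse direction (as recalled from \cite{BoDauPopRay12}). Near the receding point $P_{\tau}$, which lies far from the edge, $\cS_{\alpha}$ coincides locally with $\Hplow_{\alpha}$; cutting this state off away from $P_{\tau}$, in particular beyond the upper face where its distance to $\Upsilon$ is at least $|\tau|/\sin\gamma$, produces an admissible test function for $\FQ_{\bB,\alpha}^{\tau}$ whose Rayleigh quotient differs from $\sigma(\theta^{-})$ by a quantity exponentially small in $\tau$. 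This gives $\limsup \sse(\bB,\cS_{\alpha};\tau)\leq\sigma(\theta^{-})$ in the relevant one-sided limit.

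For the matching lower bound I would use an IMS partition of unity $\chi_{1}^{2}+\chi_{2}^{2}+\chi_{3}^{2}=1$ splitting $\cS_{\alpha}$ into a tubular neighborhood of the lower face excluding a ball of radius $\sim|\tau|^{1/2}$ about the edge, the bulk, and a high-potential region (near the upper face and near the edge), with widths tending to infinity slowly with $\tau$. On the lower-face piece a function extends by zero to $\Hplow_{\alpha}$, so its energy is $\geq\sigma(\theta^{-})$ by \eqref{E:infspecPlow}; on the bulk piece it extends to $\R^{2}$, where $H(\Add,\R^{2})+V_{\uB}^{\tau}\geq E(\bB,\R^{3})=1\geq\sigma(\theta^{-})$; on the high-potential region $V_{\uB}^{\tau}\geq \tfrac{1}{4}\tau^{2}/\sin^{2}\gamma\to+\infty$. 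The IMS localization error is controlled pointwise by $\sum_{j}|\nabla\chi_{j}|^{2}=O(w^{-2})$, with $w\to\infty$ the common width, so it tends to $0$, giving $\liminf \sse(\bB,\cS_{\alpha};\tau)\geq\sigma(\theta^{-})$. Combining the three steps yields $\sinf(\bB,\cS_{\alpha})=\sigma(\theta^{-})=\sigma(\max(\theta^{+},\theta^{-}))$, and the non-convex case $\alpha\in(\pi,2\pi)$ follows by the same argument after relabelling the faces (using the local coincidence of $\cS_{\alpha}$ with $\Hplow_{\alpha}$ near $P_{\tau}$, which holds irrespective of convexity). I expect the lower bound to be the main obstacle: one must choose the partition scales so that, uniformly as $|\tau|\to\infty$, the lower-face tube captures the well near $P_{\tau}$ yet stays disjoint from the upper face and from the edge, and so that the half-plane comparison near the receding point $P_{\tau}$ remains valid far from the singular corner.
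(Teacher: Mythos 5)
Your overall strategy --- reduce by the symmetry of Proposition~\ref{P:symmetry} to a field tangent to the upper face, read the one-sided limits off the position of the zero line $\Upsilon$ of $V_{\uB}^{\tau}$, transplant a half-plane near-ground-state at the point $P_{\tau}$ where $\Upsilon$ crosses the lower face for the upper bound, and run a three-region IMS partition for the lower bound --- is sound, and for $\alpha\in(0,\pi)$ it is essentially the argument of \cite[Proposition 4.1]{Pof13T}, which is all the paper itself offers (its ``proof'' is that citation plus the assertion that it extends to $\alpha\in(\pi,2\pi)$). Two harmless slips first: since $|(b_{1},b_{2})|=\sin\gamma$, one has $V_{\uB}^{\tau}(x)=\sin^{2}\gamma\,\dist(x,\Upsilon)^{2}$, so your lower bounds should read $V_{\uB}^{\tau}\geq\tau^{2}$ (resp.\ $\geq\tau^{2}/4$), not $\tau^{2}/\sin^{2}\gamma$; and the case $\gamma=0$ (field along the edge, tangent to both faces, where $V_{\uB}^{\tau}\equiv\tau^{2}$ and both one-sided liminfs are $+\infty$) must be excluded, as you implicitly do by taking $\gamma\in(0,\tfrac{\pi}{2}]$.

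The genuine gap is the non-convex case, which you dismiss with ``the same argument after relabelling the faces''. For $\alpha\in(\pi,2\pi)$ the sector is not contained in one side of the upper-face line, so the full line $\Upsilon$ meets $\cS_{\alpha}$ for \emph{every} $\tau$: there is no sign of $\tau$ for which $V_{\uB}^{\tau}\geq\tau^{2}$ on the sector, and the step ``one one-sided liminf is $+\infty$'' is simply false. Concretely, when $\Upsilon$ is displaced toward the complement of the sector (the empty-intersection side in the convex case), it now crosses the lower face at a receding point $P_{\tau}$ --- this is where your transplant applies, i.e.\ the crossing side switches sign --- whereas when $\Upsilon$ is displaced into the sector it is an entire line in the interior, at distance $|\tau|/\sin\gamma$ from both faces and from the corner, and on that side the band function tends to $E(\bB,\R^{3})=1$, not $+\infty$. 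The proposition survives only because $\sigma(\theta^{-})\leq 1$, but that requires proving $\liminf\sse(\bB,\cS_{\alpha};\tau)\geq\sigma(\theta^{-})$ on \emph{both} sides, which ``relabelling the faces'' does not do. The repair is within your own toolkit: run the three-region IMS bound for both signs of $\tau$. In every configuration (either sign, either convexity) the lower-face tube is $\geq\sigma(\theta^{-})$ by the half-plane comparison \eqref{E:infspecPlow} (valid since the field is not tangent to the lower face), the bulk piece extends to $\R^{2}$ and is $\geq 1\geq\sigma(\theta^{-})$, and the upper-face-plus-corner region has $V_{\uB}^{\tau}\geq\tau^{2}/4$ for large $|\tau|$ because $\Upsilon$ always stays at distance $|\tau|/\sin\gamma$ from the upper-face line and from the origin. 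Combined with the transplant upper bound on the crossing side, this yields $\sinf(\bB,\cS_{\alpha})=\sigma(\theta^{-})=\sigma(\max(\theta^{+},\theta^{-}))$ in both cases.
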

Note that in \cite{Pof13T}, this result is proved only for $\alpha\in(0,\pi)$. The proof of \cite[Proposition 4.1]{Pof13T} is mimicked to the case $\alpha\in (\pi,2\pi)$. 

We recall the useful {\em IMS localization formula} (see \cite[Theorem 3.2]{CyFrKiSi87}): 
\begin{lem}
\label{L:IMS}
Let $(\chi_{j})$ be a finite regular partition of the unity satisfying $\sum\chi_{j}^2=1$. We have for $u\in \dom(\FQ_{\bB,\alpha}^{\tau})$:
$$\FQ_{\bB,\alpha}^{\tau}(u)=\sum_{j}\FQ_{\bB,\alpha}^{\tau}(\chi_{j}u)-\sum_{j}\| \nabla\chi_{j}u\|_{L^2}^2  \ .$$
\end{lem}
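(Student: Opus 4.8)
The plan is to establish the identity at the level of the integrand and then integrate over $\Secteur_{\alpha}$, exploiting the normalisation $\sum_{j}\chi_{j}^2=1$ at two separate places. Write $P:=-i\nabla-\Add$ for the magnetic gradient, so that $\FQ_{\bB,\alpha}^{\tau}(u)=\int_{\Secteur_{\alpha}}\bigl(|Pu|^2+V_{\uB}^{\tau}|u|^2\bigr)\d x_{1}\d x_{2}$. First I would dispose of the potential term: since each $\chi_{j}$ is real and $\sum_{j}\chi_{j}^2=1$, one has $\sum_{j}V_{\uB}^{\tau}|\chi_{j}u|^2=V_{\uB}^{\tau}|u|^2$ pointwise, so the electric contributions to $\sum_{j}\FQ_{\bB,\alpha}^{\tau}(\chi_{j}u)$ reassemble exactly into the electric contribution to $\FQ_{\bB,\alpha}^{\tau}(u)$, with no remainder.

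The core of the argument is the magnetic kinetic term. Because $\chi_{j}$ is a real scalar function, the magnetic Leibniz rule gives $P(\chi_{j}u)=\chi_{j}Pu-i(\nabla\chi_{j})u$. Expanding the squared modulus of this complex vector yields
\begin{equation*}
|P(\chi_{j}u)|^2=\chi_{j}^2|Pu|^2+|\nabla\chi_{j}|^2|u|^2-2\chi_{j}\,\Im\bigl((\nabla\chi_{j})\cdot(Pu)\,\overline{u}\bigr),
\end{equation*}
where the last term is the genuine cross term (the factor $-i$ in front of $\nabla\chi_{j}$ is what turns the mixed product into an imaginary part). Summing over $j$, the first contribution gives $\bigl(\sum_{j}\chi_{j}^2\bigr)|Pu|^2=|Pu|^2$, and the second gives exactly $\sum_{j}|\nabla\chi_{j}|^2|u|^2=\sum_{j}|\nabla\chi_{j}\,u|^2$.

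The key step is that the cross terms vanish after summation. Indeed each cross term can be written as $-2\,\Im\bigl((Pu)\,\overline{u}\bigr)\cdot\chi_{j}\nabla\chi_{j}$, and since $\sum_{j}\chi_{j}\nabla\chi_{j}=\tfrac12\nabla\bigl(\sum_{j}\chi_{j}^2\bigr)=\tfrac12\nabla(1)=0$, the sum over $j$ of these terms is identically zero. Collecting the three contributions gives the pointwise identity
\begin{equation*}
\sum_{j}|P(\chi_{j}u)|^2=|Pu|^2+\sum_{j}|\nabla\chi_{j}\,u|^2.
\end{equation*}
Adding the already-treated potential term and integrating over $\Secteur_{\alpha}$ produces $\sum_{j}\FQ_{\bB,\alpha}^{\tau}(\chi_{j}u)=\FQ_{\bB,\alpha}^{\tau}(u)+\sum_{j}\|\nabla\chi_{j}u\|_{L^2}^2$, which is the claimed formula after rearrangement.

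The only point requiring care is that the manipulation is legitimate for every $u\in\dom(\FQ_{\bB,\alpha}^{\tau})$: since the partition is finite and regular, each $\chi_{j}$ and $\nabla\chi_{j}$ is bounded, so $\chi_{j}u$ again lies in the form domain \eqref{D:DomFQ} and every integral above is absolutely convergent. I do not expect a serious analytic obstacle here; the entire content of the lemma is the algebraic cancellation $\sum_{j}\chi_{j}\nabla\chi_{j}=0$ forced by $\sum_{j}\chi_{j}^2=1$, and the presence of the magnetic potential $\Add$ is irrelevant to this cancellation because it enters $P$ identically on both sides.
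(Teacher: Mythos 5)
Your proof is correct and complete. Note that the paper itself does not prove this lemma at all: it simply cites \cite[Theorem 3.2]{CyFrKiSi87}, so your argument is a self-contained replacement for that citation rather than a variant of an in-paper proof. The route you take --- expanding $|(-i\nabla-\Add)(\chi_j u)|^2$ via the magnetic Leibniz rule and killing the cross terms with the identity $\sum_j\chi_j\nabla\chi_j=\tfrac12\nabla\bigl(\sum_j\chi_j^2\bigr)=0$ --- is the standard one; in \cite{CyFrKiSi87} the same cancellation is packaged at the operator level as the double-commutator identity $\tfrac12\sum_j[\chi_j,[\chi_j,H]]=-\sum_j|\nabla\chi_j|^2$. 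Working at the quadratic-form level as you do is actually the better fit here, since the lemma is stated for $u\in\dom(\FQ_{\bB,\alpha}^{\tau})$ and not for $u$ in the operator domain, and your closing remark that boundedness of $\chi_j$ and $\nabla\chi_j$ keeps $\chi_j u$ inside the form domain \eqref{D:DomFQ} is exactly the point needed to make the form-level manipulation legitimate. One cosmetic remark: your displayed cross term $-2\chi_j\Im\bigl((\nabla\chi_j)\cdot(Pu)\overline{u}\bigr)$ is correct, and since $\nabla\chi_j$ is real it factors as $-2\Im\bigl((Pu)\overline{u}\bigr)\cdot\chi_j\nabla\chi_j$, which is precisely the form in which the summation kills it; you state this, so nothing is missing.
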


The following lemma gives a lower bound on the energy of a function supported far from the corner of the sector. This lemma will also be useful in Section \ref{S:continuity}. We denote by $B(0,R)$ the ball centered at the origin of radius $R>0$ and $\complement B(0,R)$ its complementary.

\begin{lem}
\label{L:estinfty1}
There exists $C_{1}>0$ and $R_{0}>0$ such that for all $\alpha\in (0,\pi)\cup(\pi,2\pi)$ and for all $\bB\in\dS^2$, for all $R \geq R_{0}$, for all $\tau\in\R$, for all $u\in \dom(\FQ_{\bB,\alpha}^{\tau})$ such that $\supp(u)\subset\complement \overline{B(0,R)}$: 
$$\FQ_{\bB,\alpha}^{\tau}(u) \geq  \left(E^{*}(\bB,\cW_{\alpha})-\frac{C_{1}}{\alpha^2R^2}\right)\| u \|^2_{L^2} \ .$$  
\end{lem}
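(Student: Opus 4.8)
The plan is to localise $u$ with the IMS formula of Lemma \ref{L:IMS}, using an angular partition of unity that separates the three singular chains of $\cW_{\alpha}$, and then to compare each localised piece with the corresponding model operator on a half-plane or on $\R^2$.

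Write $(r,\phi)$ for the polar coordinates centred at the corner of $\cS_{\alpha}$, with the two faces located at $\phi=\pm\frac{\alpha}{2}$. I choose three cut-off functions $\chi^{+},\chi^{-},\chi^{0}$ depending only on $\phi$, with $(\chi^{+})^2+(\chi^{-})^2+(\chi^{0})^2\equiv 1$, where $\chi^{+}$ is supported near the upper face $\phi=\frac{\alpha}{2}$, $\chi^{-}$ near the lower face $\phi=-\frac{\alpha}{2}$ and $\chi^{0}$ near the bisector $\phi=0$, the two transition zones (around $\phi=\pm\frac{\alpha}{4}$) having angular width a fixed fraction of $\alpha$. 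In particular $\chi^{+}$ vanishes near the lower face and $\chi^{-}$ near the upper one. This choice gives $|\partial_{\phi}\chi^{\bullet}|\le C/\alpha$, hence $|\nabla\chi^{\bullet}|=r^{-1}|\partial_{\phi}\chi^{\bullet}|\le C/(\alpha r)$. Since $\supp u\subset\complement\overline{B(0,R)}$ we have $r\ge R\ge R_{0}$ on $\supp u$, so $|\nabla\chi^{\bullet}|\le C/(\alpha R)$ there, which is exactly the scale producing the announced remainder: by Lemma \ref{L:IMS},
\begin{equation*}
\FQ_{\bB,\alpha}^{\tau}(u)=\sum_{\bullet}\FQ_{\bB,\alpha}^{\tau}(\chi^{\bullet}u)-\sum_{\bullet}\|(\nabla\chi^{\bullet})u\|_{L^2}^2\ \ge\ \sum_{\bullet}\FQ_{\bB,\alpha}^{\tau}(\chi^{\bullet}u)-\frac{C_{1}}{\alpha^2R^2}\|u\|_{L^2}^2 .
\end{equation*}

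It then remains to bound each localised energy from below by $E^{*}(\bB,\cW_{\alpha})$. The interior piece $\chi^{0}u$ vanishes near $\partial\cS_{\alpha}$, so its extension by zero lies in the form domain on $\R^2$; since the bottom of the spectrum of $H(\Add,\R^2)+V_{\uB}^{\tau}$ equals $E(\bB,\R^3)=1$ (recalled in the treatment of the ingoing case), one gets $\FQ_{\bB,\alpha}^{\tau}(\chi^{0}u)\ge \|\chi^{0}u\|_{L^2}^2\ge E^{*}(\bB,\cW_{\alpha})\|\chi^{0}u\|_{L^2}^2$. For the face pieces, the key point is that $\supp\chi^{+}$ lies within angular distance $<\pi$ of the upper face, so that $\chi^{+}u$, viewed on the half-plane $\Hpup_{\alpha}$, belongs to the form domain of $H(\Add,\Hpup_{\alpha})+V_{\uB}^{\tau}$ and the two quadratic forms coincide on it. Since, by the same fibered reduction \eqref{E:relationfond} applied to $\Hsup_{\alpha}=\R\times\Hpup_{\alpha}$, one has $\inf\spec(H(\Add,\Hpup_{\alpha})+V_{\uB}^{\tau})\ge E(\bB,\Hsup_{\alpha})=\sigma(\theta^{+})$ for every $\tau$ (cf. \eqref{E:infspecPup}), this yields $\FQ_{\bB,\alpha}^{\tau}(\chi^{+}u)\ge\sigma(\theta^{+})\|\chi^{+}u\|_{L^2}^2\ge E^{*}(\bB,\cW_{\alpha})\|\chi^{+}u\|_{L^2}^2$, and symmetrically for $\chi^{-}u$ with $\sigma(\theta^{-})$; here $\sigma(\theta^{\pm})\ge E^{*}(\bB,\cW_{\alpha})$ and $1\ge E^{*}(\bB,\cW_{\alpha})$ are immediate from \eqref{E:lambdastarexplicit} and the monotonicity of $\sigma$. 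Summing the three inequalities and using $\sum_{\bullet}(\chi^{\bullet})^2\equiv1$, so that $\sum_{\bullet}\|\chi^{\bullet}u\|_{L^2}^2=\|u\|_{L^2}^2$, concludes the proof.

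The main obstacle is the geometric compatibility in the non-convex range $\alpha\in(\pi,2\pi)$. There $\cS_{\alpha}$ is no longer contained in a half-plane (in fact $\Hpup_{\alpha}\subset\cS_{\alpha}$), so I must ensure $\supp(\chi^{+}u)\subset\Hpup_{\alpha}$ by keeping the support of $\chi^{+}$ within angular distance $<\pi$ of the upper face, and likewise for $\chi^{-}$; in the convex case the analogous role is played by the fact that $\chi^{+}u$ vanishes near the lower face, which is interior to $\Hpup_{\alpha}$, making the zero-extension admissible. This angular constraint competes with the gradient requirement, which forces the transition width to be of order $\alpha$. The two constraints can be met simultaneously for every $\alpha<2\pi$ precisely because the interior cut-off $\chi^{0}$ absorbs a central wedge of angular width comparable to $\alpha$, leaving each $\chi^{\pm}$ supported within angular distance at most of order $\alpha/2<\pi$ of its face. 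Verifying these inclusions and the resulting form-domain membership of the restrictions (or zero-extensions) is the technical heart of the argument; the hypothesis $R\ge R_{0}$ serves only to keep $r$ bounded away from $0$ on $\supp u$, making the polar gradient estimate uniform.
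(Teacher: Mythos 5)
Your proposal is correct and follows essentially the same route as the paper's proof: an angular IMS partition into three pieces (two face pieces and one interior piece) whose gradients are of order $1/(\alpha r)\le 1/(\alpha R)$, followed by zero-extension of each piece to the half-planes $\Hpup_{\alpha}$, $\Hplow_{\alpha}$ and to $\R^2$, and comparison with $\sigma(\theta^{\pm})$ and $E(\bB,\R^3)=1$ via the min-max principle. Your extra care on two points (the angular-distance-less-than-$\pi$ inclusion needed when $\alpha\in(\pi,2\pi)$, and using the inequality $\inf\spec(H(\Add,\Hpup_{\alpha})+V_{\uB}^{\tau})\geq\sigma(\theta^{+})$ rather than the equality, which the paper's \eqref{E:infspecPup} states only for non-tangent fields) only makes explicit what the paper leaves implicit, and does not change the argument.
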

\begin{proof}
Let $(\chi_{j})_{j=1,2,3}$ be a partition of unity satisfying $\chi_{j}\in C_{0}^{\infty}([-\frac{1}{2},\frac{1}{2}],[0,1])$, $\supp(\chi_{j})\subset [\frac{j-3}{4},\frac{j-1}{4}]$ and $\sum_{j}\chi_{j}^2=1$. We defined the cut-off functions $\chi_{j,\alpha}^{\rm pol}(r,\psi):=\chi_{j}(\frac{\psi}{\alpha})$ where $(r,\psi)\in \R_{+}\times (-\frac{\alpha}{2},\frac{\alpha}{2})$ are the polar coordinates. We denote by $\chi_{j,\alpha}$ the associated functions in cartesian coordinates. Since the $\chi_{j,\alpha}$ do not depend on $r$, there exists $C_{1}>0$ such that 
$$\forall \alpha \in (0,2\pi),\forall R>0, \quad \forall (x_{1},x_{2})\in \complement B(0,R), \quad \sum_{j=1}^{3}|\nabla\chi_{j,\alpha}(x_{1},x_{2})|^2 \leq \frac{C_{1}}{R^2\alpha^2} \ . $$
Let $u\in \dom \FQ_{\bB,\alpha}^{\tau}$ such that $\supp(u)\subset\complement B(0,R)$. The IMS formula (see Lemma \ref{L:IMS}) provides 
\begin{equation}
\label{IMS:partitionangle}
\FQ_{\bB,\alpha}^{\tau}(u) \geq \sum_{j=1}^{3}\FQ_{\bB,\alpha}^{\tau}(\chi_{j,\alpha}u) - \frac{C_{1}}{\alpha^2R^2}\| u \|_{L^2}^2 \ . 
\end{equation}
Moreover $\chi_{1}u$ and $\chi_{3}u$ are extended to functions of $L^2(\Hpup_{\alpha})$ and $L^2(\Hplow_{\alpha})$ with the suitable Neumann boundary conditions by setting $\chi_{j}u=0$ outside $\supp(\chi_{j})$. We deduce from \eqref{E:infspecPup} and the min-max principle that $\FQ_{\bB,\alpha}^{\tau}(\chi_{1,\alpha}u) \geq \sigma(\theta^{+}) \|\chi_{1,\alpha}u\|^2_{L^2}$. Similarly we prove $\FQ_{\bB,\alpha}^{\tau}(\chi_{3,\alpha}u) \geq \sigma(\theta^{-}) \|\chi_{3,\alpha}u\|^2_{L^2}$. The function $\chi_{2,\alpha}u$ is extended in the same way to a function of $\R^2$. It is elementary that 
$$\forall \tau \in \R, \quad\inf \spec(H(\Add,  \R^2)+ V_{\uB}^{\tau})=E(\bB,\R^3)=1 \ , $$
therefore $\FQ_{\bB,\alpha}^{\tau}(\chi_{2,\alpha}u) \geq \|\chi_{2,\alpha}u\|^2_{L^2}$. We conclude with \eqref{IMS:partitionangle} and the definition of $E^{*}$ (see \eqref{D:lambdastar}).
\end{proof}

\begin{prop}
\label{P:liminfsigma}
 Let $\alpha\in(0,\pi)\cup(\pi,2\pi)$ and let $\bB\in \dS^2$ be a magnetic field which is not tangent to a face of the wedge $\cW_{\alpha}$. We have
\begin{equation}
\label{E:sinfEstar}
\sinf(\bB,\cS_{\alpha})= E^{*}(\bB,\cW_{\alpha}) \ .
\end{equation}
\end{prop}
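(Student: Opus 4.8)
The plan is to prove the two inequalities $\sinf(\bB,\cS_{\alpha})\geq E^{*}(\bB,\cW_{\alpha})$ and $\sinf(\bB,\cS_{\alpha})\leq E^{*}(\bB,\cW_{\alpha})$ separately, recalling that $E^{*}(\bB,\cW_{\alpha})=\sigma(\min\{\theta^{+},\theta^{-}\})$ by \eqref{E:lambdastarexplicit} and that $\sinf$ is defined in \eqref{D:sinf}. The lower bound combines the energy estimate far from the edge of Lemma~\ref{L:estinfty1} with the fact that, in the non-tangent case, the electric potential $V_{\uB}^{\tau}$ becomes large near the origin as $|\tau|\to\infty$. The upper bound is obtained by transplanting an approximate minimizer of the half-plane operator into the sector, far from the edge and near the face realizing $\min\{\theta^{+},\theta^{-}\}$; this is the step I expect to be the \emph{main obstacle}.

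For the lower bound, choose $R$ large and introduce a radial partition of unity $\chi_{1}^2+\chi_{2}^2=1$ with $\chi_{1}$ supported in $B(0,2R)$ and $\chi_{2}$ supported in $\complement B(0,R)$, so that $|\nabla\chi_{1}|^2+|\nabla\chi_{2}|^2\leq CR^{-2}$. For $u\in\dom(\FQ_{\bB,\alpha}^{\tau})$ the IMS formula of Lemma~\ref{L:IMS} gives $\FQ_{\bB,\alpha}^{\tau}(u)=\FQ_{\bB,\alpha}^{\tau}(\chi_{1}u)+\FQ_{\bB,\alpha}^{\tau}(\chi_{2}u)-\sum_{j}\|\nabla\chi_{j}u\|^2$. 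Lemma~\ref{L:estinfty1} bounds the far term from below by $(E^{*}(\bB,\cW_{\alpha})-C_{1}\alpha^{-2}R^{-2})\|\chi_{2}u\|^2$. For the near term I would drop the magnetic kinetic part and keep only the potential: since $|x_{1}b_{2}-x_{2}b_{1}|\leq|x|\sin\gamma$ on the support of $\chi_{1}$ and $\sin\gamma\neq0$ in the non-tangent case, one gets $\FQ_{\bB,\alpha}^{\tau}(\chi_{1}u)\geq(|\tau|-2R\sin\gamma)^2\|\chi_{1}u\|^2$, whose coefficient exceeds $E^{*}(\bB,\cW_{\alpha})$ once $|\tau|$ is large (depending on $R$). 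Using $\|\chi_{1}u\|^2+\|\chi_{2}u\|^2=\|u\|^2$ this yields, for both signs of $\tau$, $\liminf_{|\tau|\to\infty}\sse(\bB,\cS_{\alpha};\tau)\geq E^{*}(\bB,\cW_{\alpha})-C_{1}\alpha^{-2}R^{-2}-CR^{-2}$, and letting $R\to\infty$ gives $\sinf(\bB,\cS_{\alpha})\geq E^{*}(\bB,\cW_{\alpha})$.

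For the upper bound, assume without loss of generality that $\min\{\theta^{+},\theta^{-}\}=\theta^{+}$, so $E^{*}(\bB,\cW_{\alpha})=\sigma(\theta^{+})$. By \eqref{E:infspecPup}, $\sigma(\theta^{+})$ is the bottom of the spectrum of $H(\Add,\Hpup_{\alpha})+V_{\uB}^{0}$, so for every $\varepsilon>0$ there is a normalized $v$ in its form domain with energy at most $\sigma(\theta^{+})+\varepsilon$; truncating $v$ along the face direction, where the plane-wave behaviour makes the cut-off error negligible compared with the mass, I may assume $v$ has bounded support. The key computation is that translating $v$ by $s\omega^{+}$, where $\omega^{+}$ is the unit vector along the upper face, changes the magnetic operator $H(\Add,\Hpup_{\alpha})$ only by a gauge transformation and replaces $\tau=0$ by $\tau_{s}=s\,c^{+}$ in $V_{\uB}^{\tau}$, with translation rate $c^{+}=\sin\gamma\cos(\theta+\tfrac{\alpha}{2})$ (up to the geometric conventions). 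In the non-tangent case $c^{+}\neq0$, so $\tau_{s}\to\pm\infty$; for $|s|$ large the gauge-transformed translate $\tilde v$ is supported far from the edge and near the upper face, where $\cS_{\alpha}$ coincides with $\Hpup_{\alpha}$, whence $\tilde v\in\dom(\FQ_{\bB,\alpha}^{\tau_{s}})$ with $\FQ_{\bB,\alpha}^{\tau_{s}}(\tilde v)\leq(\sigma(\theta^{+})+\varepsilon)\|\tilde v\|^2$. This gives $\sse(\bB,\cS_{\alpha};\tau_{s})\leq\sigma(\theta^{+})+\varepsilon$, hence the corresponding $\liminf$ is at most $\sigma(\theta^{+})$, and letting $\varepsilon\to0$ yields $\sinf(\bB,\cS_{\alpha})\leq E^{*}(\bB,\cW_{\alpha})$.

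The delicate point is this last construction: I must check that the translated and gauge-transformed quasimode genuinely lives in the sector (its bounded support must avoid the opposite face and the origin, which holds for $|s|$ large) and that the truncation of the half-plane quasimode along the extended face direction costs only $o(\|v\|^2)$ in the quadratic form. I also need the geometric fact that the translation rate $c^{\pm}$ vanishes exactly on the tangent locus, so that non-tangency guarantees $\tau_{s}\to\pm\infty$; one then simply selects the face achieving $\min\{\theta^{+},\theta^{-}\}$ and runs the argument with the corresponding nonzero rate, using \eqref{E:infspecPlow} in place of \eqref{E:infspecPup} if the lower face is selected.
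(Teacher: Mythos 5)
Your proof is correct and follows essentially the same strategy as the paper: the lower bound via an IMS partition near/far from the origin, using the growth of $V_{\uB}^{\tau}$ near the origin for large $|\tau|$ together with Lemma \ref{L:estinfty1}, and the upper bound via gauge-transformed translates of compactly supported half-plane quasimodes along the face realizing $\min\{\theta^{+},\theta^{-}\}$. The only differences are cosmetic: you keep $R$ fixed and let $|\tau|\to\infty$ before $R\to\infty$ where the paper couples the partition radius to $|\tau|$, and your explicit translation rate $c^{+}=\sin\gamma\cos(\theta+\tfrac{\alpha}{2})$ (nonzero exactly off the tangent locus, so $\tau_{s}=sc^{+}\to\pm\infty$) is a more careful rendering of the step the paper writes as $V_{\uB}^{\tau}(x-\tau\bt^{+})=V_{\uB}^{0}(x)$.
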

\begin{rem}
The relation \eqref{E:sinfEstar} is not true when the magnetic field is tangent to a face of the wedge, see Proposition \ref{P:liminfTcase} and \eqref{E:lambdastarexplicit}.
\end{rem}
\begin{proof}
{\sc Lower bound:}
Let $(\chi_{1},\chi_{2})$ be two cut-off functions in $\mathcal{C}^{\infty}(\R_{+},[0,1])$ satisfying $\chi^2_{1}+\chi_{2}^2=1$, $\chi_{1}(r)=1$ if $r\in(0,\frac{1}{2})$ and $\chi_{1}(r)=0$ if $r\in(\frac{3}{4},+\infty)$. For $\tau\in \R^{*}$ we define the cut-off functions $\chi_{j,\tau}(x_{1},x_{2}):=\chi_{j}(\frac{r}{|\tau|})$ with $r=\sqrt{x_{1}^2+x_{2}^2}$. We have 
$$\exists C>0, \forall \tau\in \R^{*}, \quad \forall (x_{1},x_{2})\in \R^2, \quad \sum_{j=1}^{2}|\nabla\chi_{j,\tau}|^2 \leq \frac{C}{\tau^2} \ . $$
For $u\in \dom(\FQ_{\bB,\alpha}^{\tau})$, the IMS formula (see Lemma \ref{L:IMS}) provides 
\begin{equation}
\label{IMS:partitionrayon}
\FQ_{\bB,\alpha}^{\tau}(u) \geq \sum_{j=1}^{2}\FQ_{\bB,\alpha}^{\tau}(\chi_{j,\tau}u) - \frac{C}{\tau^2}\| u \|_{L^2}^2 \ . 
\end{equation}
Since $\supp(\chi_{1,\tau})\subset B(0,\frac{3}{4}\tau)$, we have $\dist(\Upsilon,\supp(\chi_{1,\tau}))\geq\frac{\tau}{4}$ and therefore we have $$\forall (x_{1},x_{2})\in \supp(\chi_{1,\tau}), \quad V_{\uB}^{\tau}(x_{1},x_{2})\geq\tfrac{1}{16}\tau^2 \ . $$ We deduce that for all $\tau\neq0$:
\begin{equation}
\label{E:fq1pol}
\FQ_{\bB,\alpha}^{\tau}(\chi_{1,\tau}u)\geq \tfrac{\tau^2}{16}\|\chi_{1,\tau}u\|^2_{L^2} \ .
\end{equation}
On the other part Lemma \ref{L:estinfty1} provides a constant $C_{1}>0$ such that for all $u\in \dom(\FQ_{\bB,\alpha}^{\tau})$ we have: 
$$ \forall \tau\in \R^{*}, \quad \FQ_{\bB,\alpha}^{\tau}(\chi_{2,\tau}u) \geq \left(E^{*}(\bB,\cW_{\alpha})-\frac{C_{1}}{\alpha^2\tau^2}\right)\| \chi_{2,\tau}u \|^2_{L^2}  \ . $$
We deduce by combining this with \eqref{IMS:partitionrayon} and \eqref{E:fq1pol} that 
$$\FQ_{\bB,\alpha}^{\tau}(u) \geq \min\left\{E^{*}(\bB,\cW_{\alpha})-\frac{C_{1}}{\alpha^2\tau^2},\frac{\tau^2}{16}\right\} \| u \|_{L^2}
^2-\frac{C}{\tau^2}\| u \|^2_{L^2} \ . $$
We deduce from the min-max principle that there exists $\tau_{0}>0$ such that for all $\tau$ satisfying $|\tau|>\tau_{0}$:
$$\sse(\bB,\cS_{\alpha};\tau) \geq E^{*}(\bB,\cW_{\alpha})-\frac{C_{1}}{\alpha^2\tau^2}-\frac{C}{\tau^2} \  $$
and therefore 
$$\sinf(\bB,\cS_{\alpha};\tau) \geq E^{*}(\bB,\cW_{\alpha}) \ . $$
\noindent {\sc Upper bound:} We suppose that $\theta^{+} \leq \theta^{-}$, the other case being symmetric. We have in that case $E^{*}(\bB,\cW_{\alpha})=\sigma(\theta^{+})$. Since we have assumed that we are not in the tangent case, we have $0<\theta^{+} $. Let $\epsilon>0$, we deduce from \eqref{E:infspecPup} that there exists $u_{\epsilon}\in \cC_{0}^{\infty}(\overline{\Hpup_{\alpha}})$ such that 
\begin{equation}
\label{E:energyqm}
\langle \left(H(\Add,\Hpup_{\alpha})+ V_{\uB}^{0}\right)u_{\epsilon},u_{\epsilon} \rangle_{L^2(\Hpup_{\alpha})}=\sigma(\theta^{+})+\epsilon \ . 
\end{equation}
We use $u_{\epsilon}$ to construct a quasimode of energy $\sigma(\theta^{+})+\epsilon$. Let $\bt^{+}:=(\cos\frac{\alpha}{2},\sin\frac{\alpha}{2})$ be a vector tangent to the boundary of $\Hpup_{\alpha}$. For $x=(x_{1},x_{2})$, we define the test-function: 
$$u_{\epsilon, \, \tau}(x):=e^{i \tau x\wedge\Add({\bt}^{+})}u_{\epsilon}(x-\tau \bt^{+}) \ .
$$
 We have $\supp(u_{\epsilon, \, \tau})=\supp(u_{\epsilon})+\tau \bt^{+}$. Since $\bt^{+}$ is pointing outward the corner of $\cS_{\alpha}$ along the upper boundary, there exists $\tau_{0}>0$ such that for all $\tau>\tau_{0}$ we have $\supp(u_{\epsilon, \, \tau}) \subset \overline{\cS_{\alpha}}$ and $\supp(u_{\epsilon,\, \tau})\cap \partial\Pi_{\alpha}^{-}=\emptyset$.  Therefore $u_{\epsilon,\, \tau}\in \dom(\FQ_{\bB,\alpha}^{\tau})$. Elementary computations (see the geometrical meaning of $V_{\uB}^{\tau}(x)$ in Subsection \ref{SS:essential}) provides $V_{\uB}^{\tau}(x-\tau\bt^{+})=V_{\uB}^{0}(x)$. Due to gauge invariance we get 
 $$\langle \left(H(\Add,\cS_{\alpha})+ V_{\uB}^{\tau}\right)u_{\epsilon,\tau},u_{\epsilon,\tau} \rangle_{L^2(\cS_{\alpha})}=\langle\left(H(\Add, \Hpup_{\alpha})+ V_{\uB}^{0}\right)u_{\epsilon},u_{\epsilon} \rangle_{L^2(\Hpup_{\alpha})}\ . $$
 We deduce from \eqref{E:energyqm} and from the min-max principle that 
 $$\forall \epsilon >0, \exists \tau_{0}>0, \forall \tau > \tau_{0}, \quad \sse(\bB,\cS_{\alpha};\tau) \leq \sigma(\theta^{+})+\epsilon \  $$
 and therefore $\liminf_{\tau\to+\infty}\sse(\bB,\cS_{\alpha};\tau) \leq \sigma(\theta^{+})$. Remark that in this proof we have taken $\tau \to +\infty$ in order to construct a test-function of energy close to $\sigma(\theta^{+})$. When $\theta^{-}\leq \theta^{+}$, the proof is the same but we use $\tau\to-\infty$.
 \end{proof}

\subsection{Comparison with the spectral quantities coming from the regular case}
\label{SS:Comparaison}
\begin{theo}
\label{T:comp}
Let $\alpha\in(0,\pi)\cup(\pi,2\pi)$ and $\bB\in \dS^2$, we have
\begin{equation}
\label{Up:complambdastar}
E(\bB,\cW_{\alpha}) \leq E^{*}(\bB,\cW_{\alpha}) \ .
\end{equation}
 Moreover if $E(\bB,\cW_{\alpha}) < E^{*}(\bB,\cW_{\alpha})$ then the band function $\tau\mapsto \sse(\bB,\cS_{\alpha};\tau)$ reaches its infimum. We denote by $\tau^{\rm c}\in \R$ a critical point such that 
 $$\sse(\bB,\cS_{\alpha};\tau^{\rm c})=E(\bB,\cW_{\alpha}) \ . $$
 Then there exists an eigenfunction with exponential decay for the operator $H(\Add,  \cS_{\alpha})+V_{\uB}^{\tau^{\rm c}}$ associated to the value $E(\bB,\cW_{\alpha})$.
 \end{theo}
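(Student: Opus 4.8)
The plan is to read everything off the band function $\tau\mapsto\sse(\bB,\cS_{\alpha};\tau)$ through the fundamental relation \eqref{E:relationfond}, $E(\bB,\cW_{\alpha})=\inf_{\tau\in\R}\sse(\bB,\cS_{\alpha};\tau)$, using two inputs already available: the asymptotic value $\sinf(\bB,\cS_{\alpha})$ for large $|\tau|$, and a lower bound on the essential spectrum. First I would record that in \emph{every} configuration one has $\sinf(\bB,\cS_{\alpha})\geq E^{*}(\bB,\cW_{\alpha})$ and $\ssess(\bB,\cS_{\alpha};\tau)\geq E^{*}(\bB,\cW_{\alpha})$ for all $\tau$. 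The first follows from the lower bound part of the proof of Proposition \ref{P:liminfsigma}, which uses only Lemma \ref{L:estinfty1} together with the elementary bound on $V_{\uB}^{\tau}$ near the origin, and hence needs no tangency assumption. The second is obtained by scanning the three cases of Subsection \ref{SS:essential}: compact resolvent in the outgoing case, so the essential spectrum is empty; $\ssess=E(\bB,\R^{3})=1\geq E^{*}$ in the ingoing case; and $\ssess\geq\Theta_{0}=E^{*}$ in the tangent case by \eqref{E:compcastangentsess}.

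For the inequality \eqref{Up:complambdastar} I would split according to the geometry. When $\bB$ is tangent to a face one has $\min\{\theta^{+},\theta^{-}\}=0$, so $E^{*}(\bB,\cW_{\alpha})=\sigma(0)=\Theta_{0}$ and the claim is exactly \eqref{E:compcastangent}. When $\bB$ is not tangent, Proposition \ref{P:liminfsigma} gives $\sinf(\bB,\cS_{\alpha})=E^{*}(\bB,\cW_{\alpha})$; since the global infimum is bounded above by the $\liminf$ of the band function at $\pm\infty$, relation \eqref{E:relationfond} yields $E(\bB,\cW_{\alpha})=\inf_{\tau}\sse(\bB,\cS_{\alpha};\tau)\leq\sinf(\bB,\cS_{\alpha})=E^{*}(\bB,\cW_{\alpha})$.

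Assume now the strict inequality $E(\bB,\cW_{\alpha})<E^{*}(\bB,\cW_{\alpha})$. To see that the infimum is attained I would take a minimizing sequence $(\tau_{n})$ with $\sse(\bB,\cS_{\alpha};\tau_{n})\to E(\bB,\cW_{\alpha})$. If $(\tau_{n})$ were unbounded, a subsequence would tend to $+\infty$ or $-\infty$ and would force $\sinf(\bB,\cS_{\alpha})\leq E(\bB,\cW_{\alpha})<E^{*}(\bB,\cW_{\alpha})$, contradicting $\sinf\geq E^{*}$. Hence $(\tau_{n})$ is bounded; passing to a convergent subsequence $\tau_{n}\to\tau^{\rm c}$ and using the continuity of $\tau\mapsto\sse(\bB,\cS_{\alpha};\tau)$ on $\R$, I would get $\sse(\bB,\cS_{\alpha};\tau^{\rm c})=E(\bB,\cW_{\alpha})$, so that the finite value $\tau^{\rm c}$ realizes the infimum. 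The bound on the essential spectrum then gives $\sse(\bB,\cS_{\alpha};\tau^{\rm c})=E(\bB,\cW_{\alpha})<E^{*}(\bB,\cW_{\alpha})\leq\ssess(\bB,\cS_{\alpha};\tau^{\rm c})$, i.e. the bottom of the spectrum of $H(\Add,\cS_{\alpha})+V_{\uB}^{\tau^{\rm c}}$ lies strictly below its essential spectrum, so by the min-max principle it is a discrete eigenvalue with an associated eigenfunction $u$.

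Finally, since $E(\bB,\cW_{\alpha})$ is isolated below the essential spectrum of $H(\Add,\cS_{\alpha})+V_{\uB}^{\tau^{\rm c}}$, I would derive the exponential decay of $u$ from an Agmon / Combes--Thomas estimate, as in the half-plane situation recalled in Subsection \ref{SS:modelproblemregular} (compare \cite{BoDauPopRay12}); in the outgoing case the decay is automatic since $V_{\uB}^{\tau^{\rm c}}\to+\infty$ at infinity. The conceptual heart of the argument is the double use of the strict gap $E<E^{*}$: paired with $\sinf\geq E^{*}$ it prevents minimizing sequences from escaping to infinity, and paired with $\ssess\geq E^{*}$ it places the ground energy below the essential spectrum. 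The one genuinely technical step is the exponential decay, where in the ingoing case one must control the unbounded channel along $\Upsilon$ on which $V_{\uB}^{\tau^{\rm c}}$ vanishes; this is where I expect the main effort, using Agmon weights adapted to the magnetic quadratic form $\FQ_{\bB,\alpha}^{\tau^{\rm c}}$.
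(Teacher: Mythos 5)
Your proof is correct and follows essentially the same route as the paper's: \eqref{E:compcastangent} and Proposition \ref{P:liminfsigma} for the inequality \eqref{Up:complambdastar}, continuity of the band function together with its behaviour as $\tau\to\pm\infty$ for the attainment of the infimum, the essential-spectrum analysis of Subsection \ref{SS:essential} for discreteness of the eigenvalue, and Agmon estimates for the exponential decay. The only difference is organizational: you package everything into the two uniform bounds $\sinf(\bB,\cS_{\alpha})\geq E^{*}(\bB,\cW_{\alpha})$ and $\ssess(\bB,\cS_{\alpha};\tau)\geq E^{*}(\bB,\cW_{\alpha})$ (both valid, since the lower-bound argument of Proposition \ref{P:liminfsigma} indeed never uses non-tangency), where the paper runs the tangent and non-tangent cases separately.
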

 \begin{rem}
 Note that in the tangent case the band function $\tau\mapsto \sse(\bB,\cS_{\alpha};\tau)$ always reaches its infimum.
 \end{rem}
\begin{proof}
{\em Tangent case:} We have $E^{*}(\bB,\cW_{\alpha})=\Theta_{0}$ and \eqref{Up:complambdastar} is already proven (see \eqref{E:compcastangent}). Since the function $\tau\mapsto \sse(\bB,\cS_{\alpha};\tau)$ is continuous, we deduce from Proposition \ref{P:liminfTcase} and \eqref{E:relationfond} that the band function $\tau\mapsto \sse(\bB,\cS_{\alpha};\tau)$ reaches its infimum. Let $\tau^{\rm c}$ be a minimizer of $\sse(\bB,\cS_{\alpha};\tau)$. Assume that $E(\bB,\cW_{\alpha})<E^{*}(\bB,\cW_{\alpha})$. Since $\ssess(\bB,\cS_{\alpha};\tau^{\rm c})\geq \Theta_{0}$ (see Proposition \ref{P:specess}), $\sse(\bB,\cS_{\alpha};\tau^{\rm c})$ is a discrete eigenvalue of the operator $H(\Add,  \Secteur_{\alpha})+ V_{\uB}^{\tau^{\rm c}}$.

{\em Non tangent case:}  We deduce \eqref{Up:complambdastar} from Proposition \ref{P:liminfsigma} and \eqref{E:relationfond}. Assume that $E(\bB,\cW_{\alpha})<E^{*}(\bB,\cW_{\alpha})$. Since the function $\tau\mapsto \sse(\bB,\cS_{\alpha};\tau)$ is continuous, Proposition \ref{P:liminfsigma} and \eqref{E:relationfond} imply that the band function $\tau\mapsto \sse(\bB,\cS_{\alpha};\tau)$ reaches its infimum. We denote by $\tau^{\rm c}$ a Fourier parameter such that $E(\bB,\cW_{\alpha})=\sse(\bB,\cS_{\alpha};\tau^{\rm c})$. The bottom of the essential spectrum of $H(\Add,  \Secteur_{\alpha})+ V_{\uB}^{\tau^{\rm c}}$ is either $+\infty$ (ongoing case) or 1 (ingoing case), see Subsection \ref{SS:essential}. Since $E^{*}(\bB,\cW_{\alpha})<1$ we deduce that $E(\bB,\cW_{\alpha})$ is a discrete eigenvalue of the operator $H(\Add,  \Secteur_{\alpha})+ V_{\uB}^{\tau^{\rm c}}$. 

In both cases we denote by $u_{\tau^{\rm c}}$ an eigenfunction associated to $E(\bB,\cW_{\alpha})$ for the operator $H(\Add,  \Secteur_{\alpha})+ V_{\uB}^{\tau^{\rm c}}$. The fact that $u_{\tau^{\rm c}}$ has exponential decay is classical (see \cite{Ag82}) and we will give precise informations about the decay rate of the eigenfunctions in Proposition \ref{Agmunif1}. 
\end{proof}

Several particular cases where $E(\bB,\cW_{\alpha}) < E^{*}(\bB,\cW_{\alpha})$ can be found in literature (see \cite{Pan02}, \cite{Bon06} or \cite{Pof13T}). Theorem \ref{T:inegstricte} below gives geometrical conditions for this inequality to be satisfied. Let us also note that in \cite[Section 5]{Pof13T}, it is proved that $E(\bB,\cW_{\alpha}) =E^{*}(\bB,\cW_{\alpha})$ for a magnetic field tangent to a face, normal to the edge with an opening angle larger that $\frac{\pi}{2}$.

We now show that when \eqref{E:ineqwedge} is strict, there exists a generalized eigenfunction (in some sense we will define below) for $H(\bA,\cW_{\alpha})$ associated to the ground energy $E(\bB,\cW_{\alpha})$. This generalized eigenfunction is localized near the edge and can be used to construct quasimodes for the semiclassical magnetic Laplacian on a bounded domain with edges (see \cite{BoDauPof13}).

We denote by $L^2_{\rm loc}(\overline{\cW_{\alpha}})$ (respectively $H^1_{\rm loc}(\overline{\cW_{\alpha}}$)) the set of the functions $u$ which are in $L^2(\mathring{K})$ (respectively $H^1(\mathring{K})$) for all compact $K$ included in $\overline{\cW_{\alpha}}$ where $\mathring{K}$ denotes the interior of $K$.

We introduce the set of the functions which are {\em locally} in the domain of $H(\bA,\cW_{\alpha})$:
\begin{multline*}
   \dom_{\,\rm loc}\left(H(\bA,\cW_{\alpha})\right) :=
   \\ \{u\in H^1_{\rm loc}(\overline{\cW_{\alpha}}), \,  (-i\nabla-\bA)^2u\in L^2_{\rm loc}(\overline{\cW_{\alpha}}), \, 
  (-i\nabla-\bA)u\cdot {\bf n}=0 \ \ \rm{ on } \ \ \partial\cW_{\alpha} \}\, ,
\end{multline*}
where ${\bf n}$ is the outward normal of the boundary $\partial\cW_{\alpha}$ of the wedge.
\begin{cor}
\label{C:generalizedef}
Let $\alpha\in(0,\pi)\cup(\pi,2\pi)$ and $\bB\in \dS^2$. Assume $E(\bB,\cW_{\alpha}) < E^{*}(\bB,\cW_{\alpha})$. Then there exists a non-zero function $\psi\in \dom_{\,\rm loc}\left(H(\bA,\cW_{\alpha})\right) $ satisfying 
\begin{equation*}
\left \{
\begin{aligned}
&(-i\nabla-\bA)^2\psi=E(\bB,\cW_{\alpha})\psi \quad \mbox{in}\quad  \cW_{\alpha}
\\
&(-i\nabla-\bA)\psi\cdot {\bf n}=0 \quad \mbox{on}\quad  \partial\cW_{\alpha} \ .
\end{aligned}
\right.
\end{equation*}
 Moreover $\psi$ has exponential decay in the $(x_{1},x_{2})$ variables.
\end{cor}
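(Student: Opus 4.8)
The plan is to construct $\psi$ by undoing the partial Fourier transform in the $x_{3}$ variable, starting from the fiber eigenfunction that Theorem \ref{T:comp} already provides. Since we assume $E(\bB,\cW_{\alpha}) < E^{*}(\bB,\cW_{\alpha})$, Theorem \ref{T:comp} furnishes a critical Fourier parameter $\tau^{\rm c}\in\R$ with $\sse(\bB,\cS_{\alpha};\tau^{\rm c})=E(\bB,\cW_{\alpha})$, together with an eigenfunction $u_{\tau^{\rm c}}$ of the reduced operator $H(\Add,\cS_{\alpha})+V_{\uB}^{\tau^{\rm c}}$ associated with the value $E(\bB,\cW_{\alpha})$ and enjoying exponential decay in $(x_{1},x_{2})$. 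First I would simply set
\[
\psi(x_{1},x_{2},x_{3}) := e^{i\tau^{\rm c}x_{3}}\,u_{\tau^{\rm c}}(x_{1},x_{2}).
\]

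The heart of the argument is the elementary identity linking the full operator to its fibers on such plane-wave-type functions, which is just the pointwise counterpart of the direct integral decomposition \eqref{E:decompintdir1}. Since $\bA$ is independent of $x_{3}$ and $D_{x_{3}}e^{i\tau^{\rm c}x_{3}}=\tau^{\rm c}e^{i\tau^{\rm c}x_{3}}$, a direct computation gives
\[
H(\bA,\cW_{\alpha})\psi = e^{i\tau^{\rm c}x_{3}}\,\widehat{H}^{\tau^{\rm c}}(\bA,\cW_{\alpha})\,u_{\tau^{\rm c}} = E(\bB,\cW_{\alpha})\,\psi \quad\text{in }\cW_{\alpha}.
\]
For the boundary condition I would use that the outward normal ${\bf n}$ to $\partial\cW_{\alpha}=\partial\cS_{\alpha}\times\R$ has no $x_{3}$-component and that the in-plane part of the magnetic potential is exactly $\Add$; hence $(-i\nabla-\bA)\psi\cdot{\bf n}=e^{i\tau^{\rm c}x_{3}}\,(-i\nabla_{x_{1},x_{2}}-\Add)u_{\tau^{\rm c}}\cdot{\bf n}$, which vanishes on $\partial\cW_{\alpha}$ precisely because $u_{\tau^{\rm c}}$ satisfies the magnetic Neumann condition for the reduced operator on $\cS_{\alpha}$.

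It then remains to verify the regularity claim $\psi\in\dom_{\,\rm loc}(H(\bA,\cW_{\alpha}))$. Since $e^{i\tau^{\rm c}x_{3}}$ is smooth and bounded with bounded derivatives, and $u_{\tau^{\rm c}}$ lies in the operator domain on the sector, on every compact $K\subset\overline{\cW_{\alpha}}$ one has $\psi\in H^{1}(\mathring{K})$ and $(-i\nabla-\bA)^{2}\psi\in L^{2}(\mathring{K})$, the $x_{3}$-integration running over a bounded interval only. The exponential decay of $\psi$ in $(x_{1},x_{2})$ is inherited verbatim from that of $u_{\tau^{\rm c}}$ (quantified later in Proposition \ref{Agmunif1}), and $\psi\neq0$ because $u_{\tau^{\rm c}}\neq0$.

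I do not expect a genuine obstacle here; the only substantive point — and the reason one is forced to work in $\dom_{\,\rm loc}$ rather than in $\dom(H(\bA,\cW_{\alpha}))$ — is that $\psi$ is not globally square-integrable: since $|e^{i\tau^{\rm c}x_{3}}|=1$, one has $\psi\notin L^{2}(\cW_{\alpha})$ unless $u_{\tau^{\rm c}}=0$. Thus $\psi$ is a generalized (non-$L^{2}$) eigenfunction rather than a true one, which is unavoidable given that $\spec(H(\bA,\cW_{\alpha}))=\specess(H(\bA,\cW_{\alpha}))$ is purely essential; the partial square-integrability, namely localization in $(x_{1},x_{2})$ near the edge, is exactly what makes $\psi$ a usable building block for semiclassical quasimodes.
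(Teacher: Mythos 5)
Your proposal is correct and follows essentially the same route as the paper: you take the critical fiber parameter $\tau^{\rm c}$ and eigenfunction furnished by Theorem \ref{T:comp}, form $\psi=e^{i\tau^{\rm c}x_{3}}u_{\tau^{\rm c}}$, and verify the eigenvalue equation, Neumann condition, local domain membership, and exponential decay exactly as in the paper's proof (your identity $H(\bA,\cW_{\alpha})\psi=e^{i\tau^{\rm c}x_{3}}\widehat{H}^{\tau^{\rm c}}(\bA,\cW_{\alpha})u_{\tau^{\rm c}}$ is the same computation the paper writes out explicitly with $\bA=(\Add,x_{2}b_{1}-x_{1}b_{2})$). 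The closing remark on why $\psi\notin L^{2}(\cW_{\alpha})$ and why one must work in $\dom_{\,\rm loc}$ is a correct addition beyond what the paper states, but it does not change the argument.
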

\begin{proof}
Let $\tau^{\rm c}$ be a minimizer of $\tau\mapsto \sse(\bB,\cS_{\alpha};\tau)$ given by Theorem \ref{T:comp}. Let $\Phi_{\tau^{\rm c}}$ be an eigenfunction of $H(\Add, \cS_{\alpha})+V_{\uB}^{\tau^{\rm c}}$ associated to $E(\bB,\cW_{\alpha})$. It has exponential decay and satisfies the boundary condition $(-i\nabla-\Add)u_{\tau^{\rm c}}\cdot \underline{{\bf n}}=0$ where $\underline{{\bf n}}$ is the outward normal to the boundary of $\cS_{\alpha}$. Let 
\begin{equation}
\label{E:generalizedadm}
\psi(x_{1},x_{2},x_{3}):=e^{i\tau^{\rm c}x_{3}}\Phi_{\tau^{\rm c}}(x_{1},x_{2}) \ . 
\end{equation}
We clearly have $u\in \dom_{\rm loc}(H(\bA,\cW_{\alpha}))$. Moreover writing $\bA=(\Add,x_{2}b_{1}-x_{1}b_{2})$ we get $$(-i\nabla-\bA)^2\psi=\left((-i\nabla_{x_{1},x_{2}}-\Add)^2\Phi_{\tau^{\rm c}}+(\tau^{\rm c}-x_{2}b_{1}+x_{1}b_{2})^2\Phi_{\tau^{\rm c}}\right)e^{i\tau^{\rm c}x_{3}}=E(\bB,\cW_{\alpha})\psi \ . $$
Therefore $\psi$ satisfies the conditions of the corollary.
\end{proof}
We say that the function $\psi$ is a generalized eigenfunction of $H(\bA,\cW_{\alpha})$. Since it has the form \eqref{E:generalizedadm}, we say it is \em admissible \rm and we shall use it to construct quasimode for the operator $(-ih\nabla-\bA)^2$ on $\Omega$ when $\Omega$ has an edge (see \cite{BoDauPof13}).

%
%
%

\section{Continuity}
\label{S:continuity}
In this section we prove the continuity of the application $(\bB,\alpha)\mapsto E(\bB,\cW_{\alpha})$. The domain of the quadratic form $\FQ_{\bB,\alpha}^{\tau}$ depends on the geometry (see \eqref{D:DomFQ}), moreover the bottom of the spectrum of the operator $H(\Add,  \cS_{\alpha})+V_{\uB}^{\tau}$ may be essential, see Subsection \ref{SS:essential}. Therefore we cannot apply directly Kato's perturbation theory.

In this section we use the generic notation $g$ (like geometry) for a couple $(\bB,\alpha)\in \dS^2\times (0,2\pi)$. We denote by $E(g):=E(\bB,\cW_{\alpha})$ and $\sse(g;\tau):=\sse(\bB,\cS_{\alpha};\tau)$. We also note $\FQ_{g}^{\tau}$ the quadratic form $\FQ_{\bB,\alpha}^{\tau}$ 
\subsection{Uniform Agmon estimates}
Here we give Agmon's estimates of concentration for the eigenfunctions of the operator $H(\Add,  \cS_{\alpha})+V_{\uB}^{\tau}$ associated to the ground energy $E(g)$. 

First we recall a basic commutator formula (see \cite[Chapter 3]{CyFrKiSi87}):
\begin{lem}
 Let $\Phi$ be a uniformly Lipschitz function on $\cS_{\alpha}$ and let $(E,u)$ be an eigenpair of the operator $H(\Add,  \cS_{\alpha})+V_{\uB}^{\tau}$. Then we have 
 \begin{equation}
 \label{IMS22}
\forall u\in \dom(\FQ_{g}^{\tau}), \quad \FQ_{g}^{\tau}(e^{\Phi} u)=\int_{\cS_{\alpha}}\left(E+|\nabla \Phi|^2 \right) e^{2\Phi} |u|^2 \ . 
 \end{equation}
\end{lem}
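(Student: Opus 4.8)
The plan is to establish \eqref{IMS22} by a direct computation of $\FQ_{g}^{\tau}(e^{\Phi}u)$ using the Leibniz rule for the magnetic momentum $P:=-i\nabla-\Add$, and then to eliminate the kinetic contribution by testing the eigenvalue equation against $e^{2\Phi}u$. Throughout I treat $u$ as the eigenfunction of the pair $(E,u)$, since the identity genuinely uses $\bigl(H(\Add,\cS_{\alpha})+V_{\uB}^{\tau}\bigr)u=Eu$.

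First I would record the elementary product rule: since $\Phi$ is real-valued and (by Rademacher's theorem) differentiable almost everywhere with $|\nabla\Phi|$ bounded by its Lipschitz constant, one has $P(e^{\Phi}u)=e^{\Phi}\bigl(Pu-i(\nabla\Phi)u\bigr)$. Taking the squared modulus of this vector-valued identity and integrating over $\cS_{\alpha}$ gives
\[
\FQ_{g}^{\tau}(e^{\Phi}u)=\int_{\cS_{\alpha}}e^{2\Phi}|Pu|^2+\int_{\cS_{\alpha}}|\nabla\Phi|^2e^{2\Phi}|u|^2-2\int_{\cS_{\alpha}}e^{2\Phi}\Im\bigl(\overline{u}\,(\nabla\Phi)\cdot Pu\bigr)+\int_{\cS_{\alpha}}V_{\uB}^{\tau}e^{2\Phi}|u|^2.
\]

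Next I would use the eigenvalue equation at the level of the sesquilinear form $B$ associated with $H(\Add,\cS_{\alpha})+V_{\uB}^{\tau}$. Because $u$ lies in the operator domain and $e^{2\Phi}u$ in the form domain, the relation $B(u,e^{2\Phi}u)=E\langle u,e^{2\Phi}u\rangle$ holds with no boundary term, the Neumann condition being built into $B$. Expanding $P(e^{2\Phi}u)=e^{2\Phi}\bigl(Pu-2i(\nabla\Phi)u\bigr)$ and taking real parts, the imaginary mixed term reproduces exactly the cross term of the previous display, so that
\[
\int_{\cS_{\alpha}}e^{2\Phi}|Pu|^2-2\int_{\cS_{\alpha}}e^{2\Phi}\Im\bigl(\overline{u}\,(\nabla\Phi)\cdot Pu\bigr)+\int_{\cS_{\alpha}}V_{\uB}^{\tau}e^{2\Phi}|u|^2=E\int_{\cS_{\alpha}}e^{2\Phi}|u|^2.
\]
Substituting this into the formula for $\FQ_{g}^{\tau}(e^{\Phi}u)$ collapses the kinetic, cross and potential terms into $E\int e^{2\Phi}|u|^2$, which leaves precisely \eqref{IMS22}.

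The main obstacle is not algebraic but a matter of integrability: since $\Phi$ is only uniformly Lipschitz, hence possibly unbounded above, $e^{2\Phi}u$ need not a priori lie in $L^2(\cS_{\alpha})$, so the test function used above is not immediately admissible — indeed, the membership $e^{\Phi}u\in L^2$ is precisely what the subsequent Agmon estimates are meant to produce. I would handle this by first proving \eqref{IMS22} for the bounded Lipschitz truncations $\Phi_{n}:=\min(\Phi,n)$, for which $e^{\Phi_{n}}$ and $e^{2\Phi_{n}}$ are bounded Lipschitz, every integral is finite, $e^{\Phi_{n}}u\in\dom(\FQ_{g}^{\tau})$, and $|\nabla\Phi_{n}|\leq|\nabla\Phi|$ almost everywhere. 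Letting $n\to+\infty$, and using that $E\geq0$ (the operator being a sum of the nonnegative magnetic Laplacian and the nonnegative potential $V_{\uB}^{\tau}$) together with $|\nabla\Phi_{n}|^2\uparrow|\nabla\Phi|^2$ and $e^{2\Phi_{n}}\uparrow e^{2\Phi}$, the right-hand side $\int(E+|\nabla\Phi_{n}|^2)e^{2\Phi_{n}}|u|^2$ increases to $\int(E+|\nabla\Phi|^2)e^{2\Phi}|u|^2$ by monotone convergence; the finiteness of this limit forces $e^{\Phi}u\in\dom(\FQ_{g}^{\tau})$ and the convergence of the left-hand side, yielding \eqref{IMS22} in full generality.
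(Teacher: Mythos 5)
Your proof is correct, and it matches the paper's treatment in the only sense available: the paper does not prove this lemma at all, quoting it as a basic commutator formula from \cite[Chapter 3]{CyFrKiSi87}, and your computation --- the conjugation rule $P(e^{\Phi}u)=e^{\Phi}\bigl(Pu-i(\nabla\Phi)u\bigr)$ for $P=-i\nabla-\Add$, elimination of the kinetic and cross terms by testing the eigenvalue equation against $e^{2\Phi}u$ (legitimate since the Neumann condition is encoded in the form), and the truncation $\Phi_{n}:=\min(\Phi,n)$ to handle unbounded $\Phi$ --- is precisely the standard Agmon-type derivation behind that citation. The one step to state more carefully is the limit $n\to\infty$: the claim that finiteness of the monotone limit forces $e^{\Phi}u\in\dom(\FQ_{g}^{\tau})$ together with $\FQ_{g}^{\tau}(e^{\Phi_{n}}u)\to\FQ_{g}^{\tau}(e^{\Phi}u)$ rests on the closedness (hence $L^{2}$-lower semicontinuity) of the nonnegative form along the sequence $e^{\Phi_{n}}u$, plus a dominated-convergence argument for $P(e^{\Phi_{n}}u)$ using the dominating function $e^{\Phi}\bigl(|Pu|+|\nabla\Phi|\,|u|\bigr)$, whose two pieces lie in $L^{2}$ once the limit is finite; this is routine, and it covers exactly the situation in which the paper invokes the formula (Proposition \ref{Agmunif1}, where $|\nabla\phi_{\nu}|\equiv\nu$ and $e^{\phi_{\nu}}u_{g}\in L^{2}$ is known from \cite{Ag82}, so the right-hand side is indeed finite).
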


We introduce the lowest energy of $H(\Add,  \cS_{\alpha})+V_{\uB}^{\tau}$ far from the origin: 
\begin{equation}
\label{D:Estar}
\cE^{*}(g):=
\left\{ 
\begin{aligned}
&E^{*}(\bB,\cW_{\alpha}) \quad \mbox{if}\quad \alpha\neq\pi \ , 
\\
&E(\bB,\cW_{\alpha}) \quad \mbox{if} \quad \alpha=\pi \ .
\end{aligned}
\right.
\end{equation}
We have $\cE^{*}(g)=\sigma(\theta^{0})$ where $\theta^{0}$ is the minimum angle between the magnetic field and the boundary of $\cW_{\alpha}$. Since $\theta\mapsto\sigma(\theta)$ is continuous we deduce that $g\mapsto \cE^{*}(g)$ is continuous on $\dS^2\times (0,2\pi)$. 

The following proposition gives the exponential decay uniformly with respect to the geometry for the first eigenfunctions of $H(\Add,  \cS_{\alpha})+V_{\uB}^{\tau(g)}$, provided that there exists a gap between $E(g)$ and $\cE^{*}(g)$:
\begin{prop}
\label{Agmunif1}
Let $G\subset \dS^2\times ((0,2\pi)\setminus \pi)$. We suppose that there exist $\alpha_{0}>0$ and $\delta\in(0,1)$ such that for all $g=(\bB,\alpha)\in G$ we have $\alpha \geq \alpha_{0}$ and $\cE^{*}(g)-E(g) \geq \delta$. Let $\tau(g)\in \R$ be a value of the Fourier parameter given in Theorem \ref{T:comp} such that $\sse(g;\tau(g))=E(g)$. Let $\phi_{\nu}(x_{1},x_{2}):=\nu\sqrt{x_{1}^2+x_{2}^2}$ be an Agmon distance. Then for all $\nu \in (0,\sqrt{\delta})$ there exists $C(\nu)>0$ such that for all $g\in G$ and for all eigenfunction $u_{g}$ of $H(\Add,  \cS_{\alpha})+V_{\uB}^{\tau(g)}$ associated to $\sse(g;\tau(g))$ we have  
$$\FQ_{g}^{\tau(g)}(e^{\phi_{\nu}}u_{g}) \leq C(\nu) \| u_{g}\|^2_{L^2} \ . $$
\end{prop}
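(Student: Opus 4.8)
The plan is to run a standard Agmon argument, but keeping careful track of the uniformity of every constant with respect to the geometry $g=(\bB,\alpha)\in G$. The two inputs are the commutator identity \eqref{IMS22} applied with the radial weight $\phi_{\nu}$, which converts a lower bound on the quadratic form into an exponential decay estimate, and the far-field lower bound of Lemma \ref{L:estinfty1}, which supplies the energy $\cE^{*}(g)$ outside a ball and is already uniform in $(\bB,\alpha)$. The spectral gap $\cE^{*}(g)-E(g)\geq\delta$ is exactly what makes the exponential weight $e^{\phi_{\nu}}$ integrable against $|u_g|^2$.

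First I would fix an eigenpair $(E(g),u_g)$ with $E(g)=\sse(g;\tau(g))$ and set $w:=e^{\phi_{\nu}}u_g$. Since $|\nabla\phi_{\nu}|=\nu$ almost everywhere, the identity \eqref{IMS22} reads
\begin{equation*}
\FQ_{g}^{\tau(g)}(w)=\bigl(E(g)+\nu^2\bigr)\|w\|^2_{L^2}.
\end{equation*}
Next I would split $w$ with a two-element partition of unity $\chi_{1,R}^2+\chi_{2,R}^2=1$, where $\chi_{1,R}$ is supported in $B(0,R+1)$ and $\chi_{2,R}$ vanishes on $B(0,R)$, the gradients being supported in the annulus $\{R\leq r\leq R+1\}$ with $\sum_j|\nabla\chi_{j,R}|^2\leq C_2$ uniformly. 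The IMS localization formula (Lemma \ref{L:IMS}), the nonnegativity of $\FQ_{g}^{\tau(g)}(\chi_{1,R}w)$, and Lemma \ref{L:estinfty1} applied to $\chi_{2,R}w$ (legitimate since $\supp(\chi_{2,R}w)\subset\complement\overline{B(0,R)}$) give
\begin{equation*}
\FQ_{g}^{\tau(g)}(w)\geq\Bigl(\cE^{*}(g)-\tfrac{C_1}{\alpha^2R^2}\Bigr)\|\chi_{2,R}w\|^2_{L^2}-C_2\|w\|^2_{L^2(\{R\leq r\leq R+1\})}.
\end{equation*}

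Then I would combine the two displays. Using $\cE^{*}(g)-E(g)\geq\delta$ and $\alpha\geq\alpha_0$, the coefficient of $\|\chi_{2,R}w\|^2_{L^2}$ after rearrangement is at least $\delta-\nu^2-C_1/(\alpha_0^2R^2)$; since $\nu<\sqrt{\delta}$ I may choose $R=R(\nu,\delta,\alpha_0)\geq R_0$ so large that this is $\geq\tfrac12(\delta-\nu^2)>0$. On the right the terms $\|\chi_{1,R}w\|^2_{L^2}$ and $\|w\|^2_{L^2(\{R\leq r\leq R+1\})}$ are controlled by $e^{2\nu(R+1)}\|u_g\|^2_{L^2}$, because on $\{r\leq R+1\}$ one has $\phi_{\nu}\leq\nu(R+1)$, and $E(g)+\nu^2\leq2$ (since $E(g)\leq\cE^{*}(g)\leq1$). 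This bounds $\|\chi_{2,R}w\|^2_{L^2}$, hence $\|w\|^2_{L^2}$, by $C_4(\nu)\|u_g\|^2_{L^2}$, and feeding this back into the first display yields $\FQ_{g}^{\tau(g)}(w)\leq C(\nu)\|u_g\|^2_{L^2}$ with $C(\nu)$ depending only on $\nu$, $\delta$, $\alpha_0$ and the universal constants $C_1,C_2,R_0$.

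The main obstacle is twofold. The genuine difficulty is the uniformity: every constant must be independent of $g$, which is why the hypotheses $\alpha\geq\alpha_0$ and $\cE^{*}(g)-E(g)\geq\delta$ are imposed, and why I rely on the fact that Lemma \ref{L:estinfty1} holds with $C_1,R_0$ uniform in $(\bB,\alpha)$; note that its error term $C_1/(\alpha^2R^2)$ blows up as $\alpha\to0$, so the lower bound $\alpha\geq\alpha_0$ is exactly what is needed to treat small openings. The remaining technical point is that $\phi_{\nu}=\nu\sqrt{x_1^2+x_2^2}$ is only Lipschitz (not smooth at the origin) and a priori $e^{\phi_{\nu}}u_g$ need not lie in $\dom(\FQ_{g}^{\tau(g)})$; I would circumvent this in the usual way by first running the whole argument with the bounded weight $\phi_{\nu,m}:=\min(\phi_{\nu},m)$, for which \eqref{IMS22} and all the estimates hold with constants independent of $m$, and then letting $m\to\infty$ by monotone convergence.
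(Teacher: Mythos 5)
Your proof is correct and follows essentially the same route as the paper: the commutator identity \eqref{IMS22} with the radial weight $\phi_{\nu}$, an IMS partition near/far from the origin, the uniform far-field bound of Lemma \ref{L:estinfty1}, and the gap $\cE^{*}(g)-E(g)\geq\delta$ together with $\alpha\geq\alpha_{0}$ to absorb the errors, with all constants tracked uniformly in $g$. The only cosmetic differences are your unit-width cutoff annulus versus the paper's $R$-to-$2R$ cutoffs, and your truncation $\min(\phi_{\nu},m)$ to justify $e^{\phi_{\nu}}u_{g}\in L^2$, where the paper simply invokes the a priori decay results of Agmon's lectures.
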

\begin{proof}
 We know from the results of \cite{Ag82} that $e^{\phi_{\nu}}u_{g}\in L^2(\cS_{\alpha})$. Since $|\nabla \phi_{\nu}|^2=\nu^2$ the IMS formula \eqref{IMS22} provides
\begin{equation}
\label{IMS:qm}
\int_{\cS_{\alpha}}(\sse(g;\tau(g))+\nu^2)e^{2\phi_{\nu}}|u_{g}|^2=\FQ_{g}^{\tau(g)}(e^{\phi_{\nu}}u_{g}) \ . 
\end{equation}
We use cut-off functions $\chi_{1,R}$ and $\chi_{2,R}$ in $C^{\infty}(\cS_{\alpha},[0,1])$ that satisfy $\chi_{1,R}(x)=0$ when $|x| \geq 2R$ and $\chi_{1,R}(x)=1$ when $|x| \leq R$ and $\chi_{1,R}^2+\chi_{2,R}^2=1$. We also assume without restriction that there exists $C>0$ such that
\begin{equation}
\label{E:controletaderivee}
\forall R>0, \quad \sum_{j=1}^{2}|\nabla \chi_{j,R}|^2 \leq \frac{C}{R^2} \ . 
\end{equation}
Lemma \ref{L:IMS} provides 
$$\FQ_{g}^{\tau(g)}(e^{\phi_{\nu}}u_{g}) = \sum_{j=1}^{2} \FQ_{g}^{\tau(g)}(\chi_{j,R}e^{\phi_{\nu}}u_{g})-\sum_{j=1}^{2}\| \nabla\chi_{j,R}e^{\phi_{\nu}}u_{g}\|^2  \ $$
and from \eqref{IMS:qm} and \eqref{E:controletaderivee} we get
$$ \left(\sse(g;\tau(g))+\nu^2+\frac{C}{R^2} \right) \| e^{\phi_{\nu}}u_{g}\|^2_{L^2} \geq \sum_{j=1}^{2}\FQ_{g}^{\tau(g)}(\chi_{j,R}e^{\phi_{\nu}}u_{g}) \ . $$
When $\alpha\neq\pi$ Lemma \ref{L:estinfty1} provides $\FQ_{g}^{\tau(g)}(\chi_{2,R}u_{g}) \geq (\cE^{*}(g)-\frac{C_{1}}{\alpha^2 R^2})\| \chi_{2,R}e^{\phi_{\nu}}u_{g}\|^2_{L^2}$, therefore we have for all $g\in G$:
\begin{multline}
\label{E:inegAgmonclef}
\left(\cE^{*}(g)-\sse(g;\tau(g))-\nu^2-\frac{C_{1}}{R\alpha_{0}^2}-\frac{C}{R^2} \right)\| \chi_{2,R}e^{\phi_{\nu}}u_{g}\|^2_{L^2} 
\\
\leq \left(\sse(g;\tau(g))+\nu^2+\frac{C}{R^2}+\frac{C_{1}}{R\alpha_{0}^2} \right) \| \chi_{1,R}e^{\phi_{\nu}}u_{g}\|^2_{L^2} \ .
\end{multline}
From the hypotheses on $G$ we can choose $R>0$ and $\epsilon>0$ such that: 
$$\forall g\in G, \quad \epsilon \leq \left(\cE^{*}(g)-\sse(g;\tau(g))-\nu^2-\frac{C_{1}}{R^2\alpha_{0}^2}-\frac{C}{R^2} \right) \ . $$
Moreover since $\sse(g;\tau(g))<1$ and $\nu < \sqrt{\delta}\leq1$ we can choose $R$ such that $\sse(g;\tau(g))+\nu^2+\frac{C}{R^2} +\frac{C_{1}}{R^2\alpha_{0}^2}\leq 2$. We deduce from \eqref{E:inegAgmonclef}
$$\forall g\in G, \quad \| e^{\phi_{\nu}}u_{g}\|^2_{L^2} \leq \left(\frac{2}{\epsilon}+1\right)\| \chi_{1,R}e^{\phi_{\nu}}u_{g}\|^2_{L^2} \leq \left(\frac{2}{\epsilon}+1\right)e^{2\nu R}\|u_{g}\|_{L^2}^2 \ . $$
We deduced the estimate on the quadratic form from the IMS formula \eqref{IMS:qm}.
\end{proof}

\subsection{Polar coordinates}
Let $(r,\psi)\in \R_{+}\times (-\frac{\alpha}{2},\frac{\alpha}{2})$ be the usual polar coordinates of $\cS_{\alpha}$. We use the change of variables associated to the normalized polar coordinates $(r,\phi):=(r,\frac{\psi}{\alpha})\in\Omega_{0}:=\R_{+}\times(-\frac{1}{2},\frac{1}{2})$. After a change a gauge (see \cite[Section 3]{Bon06} and \cite[Section 5]{Pof13T}) we get that the quadratic form $\FQ_{g}^{\tau}$ is unitary equivalent to the quadratic form
\begin{equation}
\label{D:FQpol}
\FQpol_{g}^{\tau}(u):=\int_{\Omega_{0}}\left(|(\partial_{r}-i\alpha r \phi b_{3})u|^2+\frac{1}{\alpha^2 r^2}|\partial_{\phi}u|^2+\Vpol_{g}^{\tau}(r,\phi)|u|^2\right)r \d r \d \phi 
\end{equation}
  with the electric potential in polar coordinates:
    \begin{equation}
\label{}
\Vpol_{g}^{\tau}(r,\phi):=\big(r\cos(\phi\alpha)b_{2}-r\sin(\phi\alpha)b_{1}-\tau\big)^2 \ .
\end{equation} 
The form domain is 
\begin{equation*}
\label{Domaine_fqpol}
\dom(\FQpol_{g}^{\tau})=
\left\{ u\in L^2_{r}(\Omega_{0}), \, (\partial_{r}-i\alpha r \phi b_{3})u\in L^2_{r}(\Omega_{0}),\,  \frac{1}{r}\partial_{\phi}u\in L^2_{r}(\Omega_{0}),\,  \sqrt{\Vpol_{g}^{\tau}}\, u\in L^2_{r}(\Omega_{0}) \right\}
\end{equation*}
where $L^2_{r}(\Omega_{0})$ stands for the set of the square-integrable functions for the weight $r\d r$.

\begin{notation}
Let $g_{0}=(\bB_{0},\alpha_{0})\in\dS^{2}\times (0,2\pi)$ and $\eta>0$. We denote by $B(g_{0},\eta)$ the ball of $\dS^{2}\times \R$ of center $g_{0}$ and of radius $\eta$ related to the euclidean norm $\|g\|:=\left(\|\bB\|^2_{2}+\alpha^2\right)^{1/2}$.
\end{notation}


\begin{lem}
\label{L:pertpolaire}
Let $g_{0}=(\bB_{0},\alpha_{0})\in\dS^{2}\times (0,2\pi)$. There exist $C>0$ and $\eta>0$ such that $B(g_{0},\eta)\subset \dS^{2}\times (0,2\pi)$ and for all $g\in B(g_{0},\eta)$ we have for all
$u\in\dom(\FQpol_{g}^{\tau})\cap \dom(\FQpol_{g_{0}}^{\tau})$:
$$\forall \tau\in \R, \quad \FQpol_{g}^{\tau}(u) \leq \FQpol_{g_{0}}^{\tau}(u) + C \|g-g_{0}\| \left(\| ru \|_{L^2_{r}(\Omega_{0})} ^2 +\FQpol_{g_{0}}^{\tau}(u)\right) \ .$$
\end{lem}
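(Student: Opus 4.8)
The goal is to estimate the difference $\FQpol_{g}^{\tau}(u)-\FQpol_{g_{0}}^{\tau}(u)$ uniformly in $\tau$, showing it is controlled by $\|g-g_{0}\|$ times a sum of the reference quantities $\|ru\|^2_{L^2_r}$ and $\FQpol_{g_{0}}^{\tau}(u)$. The plan is to inspect the explicit expression \eqref{D:FQpol} term by term and treat the $g$-dependence of each of the three pieces --- the radial kinetic term, the angular kinetic term, and the electric potential term --- as a perturbation. Since all coefficients in \eqref{D:FQpol} depend on $g=(\bB,\alpha)$ only through the smooth functions $\alpha$, $b_{1}$, $b_{2}$, $b_{3}$ (and through $\cos(\phi\alpha)$, $\sin(\phi\alpha)$), on a small ball $B(g_{0},\eta)$ these coefficients are Lipschitz in $g$ with constants uniform in $(r,\phi,\tau)$ as long as $\eta$ is small enough that $B(g_{0},\eta)\subset\dS^2\times(0,2\pi)$ stays away from $\alpha=0$ and $\alpha=2\pi$.

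First I would handle the angular term $\frac{1}{\alpha^2 r^2}|\partial_{\phi}u|^2$: the only $g$-dependence is the prefactor $\alpha^{-2}$, which on $B(g_{0},\eta)$ satisfies $|\alpha^{-2}-\alpha_{0}^{-2}|\leq C\|g-g_{0}\|$, so this contribution is bounded by $C\|g-g_{0}\|$ times the corresponding reference term $\frac{1}{\alpha_{0}^2 r^2}|\partial_{\phi}u|^2$, hence absorbed into $\FQpol_{g_{0}}^{\tau}(u)$. Next the radial term $|(\partial_{r}-i\alpha r\phi b_{3})u|^2$: writing the difference of the two magnetic derivatives as a multiplication by $-i(\alpha b_{3}-\alpha_{0}b_{3}^{0})r\phi$, which is Lipschitz in $g$, I would expand the square and use $2ab\leq a^2+b^2$ (Young's inequality) to bound the cross term. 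The resulting error involves $\|(\partial_{r}-i\alpha_{0}r\phi b_{3}^{0})u\|$ (controlled by $\FQpol_{g_{0}}^{\tau}(u)$) and $\|r\phi\, u\|\leq\tfrac12\|ru\|$ (controlled by the $\|ru\|^2_{L^2_r}$ term), with the Lipschitz factor $\|g-g_{0}\|$ out front; this is exactly the shape of the claimed bound.

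The main work, and the step I expect to be the mildest obstacle, is the electric-potential term $\Vpol_{g}^{\tau}(r,\phi)=\big(r\cos(\phi\alpha)b_{2}-r\sin(\phi\alpha)b_{1}-\tau\big)^2$, because here the $\tau$-dependence must be tracked and made uniform. Writing $\Vpol_{g}^{\tau}=\big(a_{g}(r,\phi)-\tau\big)^2$ with $a_{g}(r,\phi):=r\cos(\phi\alpha)b_{2}-r\sin(\phi\alpha)b_{1}$, I observe that $a_{g}$ is Lipschitz in $g$ with $|a_{g}-a_{g_{0}}|\leq C\|g-g_{0}\|\,r$ uniformly in $(\phi,\tau)$, since the trigonometric factors and their $\alpha$-derivatives are bounded and $|\phi|\leq\tfrac12$. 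Then $\Vpol_{g}^{\tau}-\Vpol_{g_{0}}^{\tau}=(a_{g}-a_{g_{0}})(a_{g}+a_{g_{0}}-2\tau)$, and the crucial point is that the second factor equals $(a_{g}-\tau)+(a_{g_{0}}-\tau)$, i.e. $\sqrt{\Vpol_{g}^{\tau}}+\sqrt{\Vpol_{g_{0}}^{\tau}}$ up to sign. Hence by Cauchy--Schwarz applied against $|u|^2 r\d r\d\phi$, the potential-difference integral is bounded by $C\|g-g_{0}\|$ times $\|ru\|_{L^2_r}\big(\|\sqrt{\Vpol_{g}^{\tau}}u\|_{L^2_r}+\|\sqrt{\Vpol_{g_{0}}^{\tau}}u\|_{L^2_r}\big)$; using $\sqrt{\Vpol_{g}^{\tau}}=\sqrt{\Vpol_{g_{0}}^{\tau}}+O(\|g-g_{0}\|\,r)$ once more and Young's inequality converts this into the desired combination of $\|ru\|^2_{L^2_r}$ and $\FQpol_{g_{0}}^{\tau}(u)$ with a $\|g-g_{0}\|$ factor. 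Collecting the three estimates and renaming the constant $C$ yields the stated inequality; the key is that every error bound is uniform in $\tau$ because the $\tau$-dependence appears only through the reference quantity $\sqrt{\Vpol_{g_{0}}^{\tau}}$, never in the Lipschitz constants themselves.
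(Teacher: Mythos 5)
Your proposal is correct and follows essentially the same route as the paper's proof: a term-by-term perturbation of the three pieces of $\FQpol_{g}^{\tau}$, using Lipschitz bounds on the coefficients (uniform in $\tau$, $r$, $\phi$ thanks to $|\phi|\leq\tfrac12$ and $\alpha$ staying away from $0$ and $2\pi$ on the ball), with Young's inequality absorbing the cross terms into $\|ru\|^2_{L^2_r}$ and $\FQpol_{g_0}^{\tau}(u)$. The only cosmetic difference is that for the potential you factor the difference of squares and apply Cauchy--Schwarz at the integral level, whereas the paper expands $\big((a_{g_0}-\tau)+(a_g-a_{g_0})\big)^2$ pointwise; the two computations are algebraically equivalent.
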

\begin{proof}
Let $g_{0}=(\bB_{0},\alpha_{0})$ and $g=(\bB,\alpha)$ be in $\dS^2\times(0,2\pi)$. We denote by $(b_{j,0})_{j}$ and $(b_{j})_{j}$ the cartesian coordinates of $\bB_{0}$ and $\bB$. Let $d:=\|g-g_{0}\|$. We discuss separately the three terms of $\FQpol_{g}^{\tau}(u)$ written in \eqref{D:FQpol}. For the first one we write 
\begin{align*}
|(\partial_{r}-i\alpha b_{3}r \phi)u|^2 \leq & |(\partial_{r}-i\alpha_{0} b_{3,0}r \phi)u|^2
\\
&+|\alpha_{0}b_{3,0}-\alpha b_{3}|^2 r^2|u|^2+2r|u| |\alpha_{0}b_{3,0}-\alpha b_{3}||(\partial_{r}-i\alpha_{0} b_{3,0}r \phi)u| 
\end{align*}
We have $|\alpha_{0}b_{3,0}-\alpha b_{3}|\leq \alpha_{0}|b_{3,0}-b_{3}|+|b_{3}||\alpha_{0}-\alpha|\leq (2\pi+1)d,$ and $$2r|u| |\alpha_{0}b_{3,0}-\alpha b_{3}||(\partial_{r}-i\alpha_{0} b_{3,0}r \phi)u|\leq (2\pi+1)d\left(r^2|u|^2+|(\partial_{r}-i\alpha_{0} b_{3,0}r \phi)u|^2\right) \ .$$
Therefore there exists $C_{1}>0$ such that for all  $g\in \dS^2\times (0,2\pi)$:
\begin{multline}
\label{E:Pert1}
|(\partial_{r}-i\alpha b_{3}r \phi)u|^2 \leq  |(\partial_{r}-i\alpha_{0} b_{3,0}r \phi)u|^2
\\
+C_{1}d\left( r^2|u|^2(1+d)+|(\partial_{r}-i\alpha_{0} b_{3,0}r \phi)u|^2\right) \ .
\end{multline}
We deal with the second term: we have
\begin{align*}\left|\frac{1}{\alpha^2r^2}|\partial_{\phi}u|^2-\frac{1}{\alpha_{0}^2r^2}|\partial_{\phi}u|^2\right| 
= d\frac{\alpha+\alpha_{0}}{\alpha^2\alpha_{0}}\frac{1}{\alpha_{0}r^2}|\partial_{\phi}u|^2   \ .
\end{align*}
Therefore there exist $\eta>0$ and $C_{2}>0$ such that $B(g_{0},\eta)\subset \dS^2\times (0,2\pi)$ and
\begin{equation}
\label{E:Pert2}
\forall g\in B(g_{0},\eta), \quad \frac{1}{\alpha^2r^2}|\partial_{\phi}u|^2\leq\frac{1}{\alpha_{0}^2r^2}|\partial_{\phi}u|^2
+ C_{2}d\frac{1}{\alpha_{0}r^2}|\partial_{\phi}u|^2   \ .
\end{equation}
For the third term we write 
\begin{align*}
\Vpol_{g}^{\tau}(r,\phi)\leq &\Vpol_{g_{0}}^{\tau}(r,\phi) + \left|\cos(\alpha\phi)b_{2}-\cos(\alpha_{0}\phi)b_{2,0}+\sin(\alpha_{0}\phi)b_{1,0}-\sin(\alpha\phi)b_{1} \right|^2r^2
\\
&+2\sqrt{\Vpol_{g_{0}}^{\tau}(r,\phi)}  \left|\cos(\alpha\phi)b_{2}-\cos(\alpha_{0}\phi)b_{2,0}+\sin(\alpha_{0}\phi)b_{1,0}-\sin(\alpha\phi)b_{1} \right| r \ .
\end{align*}
We get $C_{3}>0$ and $C_{4}>0$ such that for all $g\in \dS^2\times (0,2\pi)$ and for all $\tau\in \R$:
\begin{equation}
\label{E:Pert3}
\forall (r,\phi)\in \Omega_{0}, \quad \Vpol_{g, \tau}(r,\phi)\leq (1+C_{3}d)\Vpol_{g_{0}}^{\tau}(r,\phi)+C_{4}r^2 d \ .
\end{equation}
Combining \eqref{E:Pert1}, \eqref{E:Pert2} and \eqref{E:Pert3} we get $C>0$ such that for all $g\in B(g_{0},\eta)$:
$$\FQpol_{g}^{\tau}(u) \leq \FQpol_{g_{0}}^{\tau}(u)| + C \| g-g_{0}\| \left(\| ru \|_{L^2_{r}(\Omega_{0})} ^2 +\FQpol_{g_{0}}^{\tau}(u)\right) \ .$$
\end{proof}

\subsection{Main result}
\begin{theo}
\label{P:continuitediedre}
The function $g\mapsto E(g)$ is continuous on $\dS^{2}\times(0,2\pi)$. 
\end{theo}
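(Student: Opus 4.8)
The plan is to establish upper and lower semicontinuity of $g\mapsto E(g)$ separately at an arbitrary fixed point $g_{0}\in\dS^{2}\times(0,2\pi)$. Two facts will be used repeatedly: that $\cE^{*}(g)=\sigma(\theta^{0}(g))$ is continuous, and that $E(g)\leq\cE^{*}(g)$ everywhere (with equality when $\alpha=\pi$). I would work throughout in the normalized polar coordinates, where the form domain $\dom(\FQpol_{g}^{\tau})$ is carried by the fixed set $\Omega_{0}$ independent of $\alpha$, so that the perturbative comparison of Lemma~\ref{L:pertpolaire} is available; since $g$ and $g_{0}$ enter its proof symmetrically, I will also use the reverse estimate (with $g$ and $g_{0}$ interchanged), valid with a uniform constant on a fixed ball $B(g_{0},\eta)$.

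For \emph{upper semicontinuity}, I distinguish whether the spectral gap is open at $g_{0}$. If $E(g_{0})=\cE^{*}(g_{0})$, then $\limsup_{g\to g_{0}}E(g)\leq\limsup_{g\to g_{0}}\cE^{*}(g)=\cE^{*}(g_{0})=E(g_{0})$ is immediate from $E\leq\cE^{*}$ and continuity of $\cE^{*}$. If instead $E(g_{0})<\cE^{*}(g_{0})$ (so $\alpha_{0}\neq\pi$), Theorem~\ref{T:comp} supplies a Fourier parameter $\tau^{\rm c}$ and an exponentially decaying eigenfunction $u_{g_{0}}$ at energy $E(g_{0})$; its polar image $\widetilde u$ lies in $\dom(\FQpol_{g_{0}}^{\tau^{\rm c}})\cap\dom(\FQpol_{g}^{\tau^{\rm c}})$ for $g$ near $g_{0}$ (the decay makes every polynomial weight integrable). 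Feeding $\widetilde u$ into Lemma~\ref{L:pertpolaire} and using $\FQpol_{g_{0}}^{\tau^{\rm c}}(\widetilde u)=E(g_{0})\|\widetilde u\|^{2}$ together with $\|r\widetilde u\|^{2}<\infty$ yields $E(g)\leq\sse(g;\tau^{\rm c})\leq E(g_{0})+C\|g-g_{0}\|$ via \eqref{E:relationfond}, hence $\limsup_{g\to g_{0}}E(g)\leq E(g_{0})$.

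For \emph{lower semicontinuity}, I take $g_{n}\to g_{0}$ realizing $\ell:=\liminf_{g\to g_{0}}E(g)=\lim_{n}E(g_{n})$. If $\ell\geq\cE^{*}(g_{0})$ then $E(g_{0})\leq\cE^{*}(g_{0})\leq\ell$ and there is nothing to do. Otherwise $\ell<\cE^{*}(g_{0})$, and continuity of $\cE^{*}$ forces a \emph{uniform} gap $\cE^{*}(g_{n})-E(g_{n})\geq\delta>0$ with $\alpha_{n}$ bounded away from $0$ (hence $\alpha_{n}\neq\pi$) for large $n$. Theorem~\ref{T:comp} then gives normalized eigenfunctions $u_{n}$ at parameters $\tau_{n}$ with energy $E(g_{n})$, and the uniform Agmon estimate of Proposition~\ref{Agmunif1}, combined with the identity \eqref{IMS22}, provides a uniform bound $\|r\widetilde u_{n}\|^{2}\leq C'$ for their polar images (the decay also places $\widetilde u_{n}\in\dom(\FQpol_{g_{0}}^{\tau_{n}})$). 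Applying the reverse of Lemma~\ref{L:pertpolaire} with $\widetilde u_{n}$ as a test function for $g_{0}$ at parameter $\tau_{n}$, and using $\FQpol_{g_{n}}^{\tau_{n}}(\widetilde u_{n})=E(g_{n})\leq1$, gives
$$E(g_{0})\leq\sse(g_{0};\tau_{n})\leq\FQpol_{g_{0}}^{\tau_{n}}(\widetilde u_{n})\leq E(g_{n})+C(C'+1)\|g_{n}-g_{0}\|,$$
and letting $n\to\infty$ yields $E(g_{0})\leq\ell$. Note that the minimizing $\tau_{n}$ need not be controlled, since $E(g_{0})\leq\sse(g_{0};\tau_{n})$ holds for every $\tau_{n}$ and Lemma~\ref{L:pertpolaire} is uniform in $\tau$.

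The main obstacle is this lower bound: one must rule out that near-ground states of $H(\Add,\cS_{\alpha_{n}})+V_{\uB}^{\tau_{n}}$ spread out towards infinity (where the energy is governed by $\cE^{*}$) as $g_{n}\to g_{0}$. This is exactly where a \emph{uniform} localization near the corner is needed, and the decisive point is that a strict limiting gap $\ell<\cE^{*}(g_{0})$ upgrades to a uniform gap along the sequence, making Proposition~\ref{Agmunif1} applicable and producing the uniform weight bound $\|r\widetilde u_{n}\|^{2}\leq C'$ that renders the perturbation remainder $o(1)$. Combining the two semicontinuity estimates proves continuity at the arbitrary point $g_{0}$.
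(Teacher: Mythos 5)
Your proof is correct, and it runs on exactly the same machinery as the paper's: the polar-coordinate perturbation bound of Lemma~\ref{L:pertpolaire} used in both directions (the paper also uses the reverse direction, silently, when it takes $u_{g}$ as a quasimode for $\FQpol_{g_{0}}^{\tau(g)}$), the existence of exponentially decaying eigenfunctions from Theorem~\ref{T:comp} whenever a gap is open, the uniform Agmon estimates of Proposition~\ref{Agmunif1}, and the continuity of $g\mapsto\cE^{*}(g)$ combined with $E\leq\cE^{*}$. What differs is the logical packaging. The paper splits according to whether $E(g_{0})<\cE^{*}(g_{0})$ (Case 1: both inequalities proved directly on a ball with uniform gap) or $E(g_{0})=\cE^{*}(g_{0})$ (Case 2: a contradiction argument assuming a sequence with $E(g)<\cE^{*}(g)-\epsilon_{0}$). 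You instead split into upper and lower semicontinuity, and for the lower bound you take a sequence realizing $\ell=\liminf E(g)$ and run the dichotomy $\ell\geq\cE^{*}(g_{0})$ versus $\ell<\cE^{*}(g_{0})$; the second branch is the same computation as the paper's (uniform gap $\Rightarrow$ uniform Agmon localization $\Rightarrow$ reverse quasimode transfer with remainder $O(\|g_{n}-g_{0}\|)$), but stated directly rather than by contradiction, and it absorbs the paper's Case 1 lower bound and Case 2 into a single argument, dispensing with the intermediate ``squeeze'' statement. Your explicit observations --- that the reversed Lemma~\ref{L:pertpolaire} holds with a uniform constant near $g_{0}$, that uniformity in $\tau$ means the minimizers $\tau_{n}$ need no control, and that a positive gap forces $\alpha_{n}\neq\pi$ --- are all points the paper relies on without comment. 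One microscopic slip in wording: it is the uniform gap together with the definition \eqref{D:Estar} (which sets $\cE^{*}=E$ when $\alpha=\pi$) that rules out $\alpha_{n}=\pi$, not the fact that $\alpha_{n}$ is bounded away from $0$; your parenthetical attaches the conclusion to the wrong hypothesis, but the needed fact is true and you clearly use it correctly.
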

\begin{proof}
Let $g_{0}\in \dS^2\times (0,2\pi)$. We distinguish different cases depending on whether \eqref{Up:complambdastar} is strict or not.

\noindent {\em Case 1}: When 
\begin{equation}
\label{H:Incas1}
E(g_{0}) < \cE^{*}(g_{0}) \ . 
\end{equation}
Let us note that in that case $\alpha_{0}\neq\pi$ (see \eqref{D:Estar}). We use Theorem \ref{T:comp}: There exists $\tau^{\rm c}\in \R$ such that the band function $\tau \mapsto \sse(g_{0};\tau)$ reaches its infimum in $\tau^{\rm c}$ and there exists a normalized eigenfunction (in polar coordinate) $u_{0}$ for $\FQpol_{g_{0}}^{\tau^{\rm c}}$ with exponential decay in $r$. We use this function as a quasimode for $\FQpol_{g}^{\tau^{\rm c}}$. We get from Lemma \ref{L:pertpolaire} constants $C>0$ and $\eta>0$ such that  for all $g\in B(g_{0},\eta)$:
\begin{align*}
\FQpol_{g}^{\tau^{\rm c}}(u_{0}) &\leq 
\FQpol_{g_{0}}^{\tau^{\rm c}}(u_{0})+C\|g-g_{0}\|\left(\|ru_{0}\|_{L^2_{r}}^2+\FQpol_{g_{0}}^{\tau^{\rm c}}(u_{0})\right)
\\
&=E(g_{0})+C\|g-g_{0}\|\left(\|ru_{0}\|_{L^2_{r}}^2+E(g_{0})\right) \ . 
\end{align*}
Let $\epsilon>0$. Since $u_{0}$ has exponential decay in $r$ we get for $g$ close enough to $g_{0}$:
$$ \FQpol_{g}^{\tau^{\rm c}}(u_{0}) \leq E(g_{0})+\epsilon \ . $$
The min-max Principle and the relation \eqref{E:relationfond} provide
\begin{equation}
\label{E:contlimsup}
 \limsup_{g\to g_{0}}E(g) \leq E(g_{0})  \ . 
 \end{equation}
Using this upper bound, the assumption \eqref{H:Incas1} and the continuity of $g\mapsto \cE^{*}(g)$, we deduce that there exist $\kappa>0$ and $\epsilon_{0}>0$ such that $\overline{B(g_{0},\kappa)}\subset \dS^2\times((0,2\pi)\setminus \pi)$ and
  \begin{equation}
  \label{E:distspecess}
  \forall g\in B(g_{0},\kappa), \quad \epsilon_{0} < \cE^{*}(g)-E(g) \ .
  \end{equation}
   Let  $g\in B(g_{0},\kappa)$, Theorem \ref{T:comp} provides  $\tau(g)\in \R$ such that $\sse(g;\tau(g))=E(g)$ is a discrete eigenvalue for the operator $H(\Add,\cS_{\alpha})+V_{\uB}^{\tau(g)}$. We denote by $u_{g}$ an associated normalized eigenfunction in polar coordinates.
We apply the uniform exponential estimates of Proposition \ref{Agmunif1} to the set $G:=B(g_{0},\kappa)$ and we get:
  \begin{equation}
\label{E:concentrationexpopreuveC}
 \forall \nu\in(0,\sqrt{\epsilon_{0}}), \ \exists C_{0}(\nu), \ \forall g\in B(g_{0},\kappa),
\quad \| e^{\nu \,r} u_{g} \|_{L^2_{r}(\Omega_{0})}<C_{0}(\nu) \ .
\end{equation}
We use $u_{g}$ as a quasimode for $\FQpol_{g_{0}}^{\tau(g)}$: \eqref{E:concentrationexpopreuveC} and Lemma \ref{L:pertpolaire} yields
  $$\exists C_{1}>0,\ \forall g\in B(g_{0},\kappa), \quad \FQpol_{g_{0}}^{\tau(g)}(u_{g}) \leq \FQpol_{g}^{\tau(g)}(u_{g})+C_{1}\| g-g_{0}\|$$
 and since $u_{g}$ satisfies $\FQpol_{g}^{\tau(g)}(u_{g})=E(g)$ we deduce from the min-max principle and \eqref{E:relationfond}:
 $$\forall g\in B(g_{0}, \kappa), \quad E(g_{0}) \leq E(g)+C_{1}\|g-g_{0} \|  \ .$$ 
This last upper bound combined with \eqref{E:contlimsup} brings the continuity of $E(\cdot)$ in $g_{0}$ when $E(g_{0})<\cE^{*}(g_{0})$.
 
\noindent {\em Case 2}: When
\begin{equation}
\label{H:Incas2}
E(g_{0}) = \cE^{*}(g_{0}) \ . 
\end{equation}
Let us suppose that for all $\epsilon>0$ there exists $\kappa>0$ such that for all $g\in B(g_{0},\kappa)$  we have 
$$\cE^{*}(g)-\epsilon \leq E(g) \leq \cE^{*}(g) \ . $$ In that case we deduce the continuity of $E(\cdot)$ in $g_{0}$ from the continuity of $\cE^{*}(\cdot)$.

Let us write the contraposition of the previous statement and exhibit a contradiction. We suppose that there exists $\epsilon_{0}>0$ such that for all $\kappa>0$ there exists $g\in \dS^2\times(0,2\pi)$ satisfying $\|g-g_{0}\|< \kappa$ and $E(g) < \cE^{*}(g)-\epsilon_{0}$. This implies $\alpha\neq\pi$ (see \eqref{D:Estar}). Theorem \ref{T:comp} provides $\tau(g)\in \R$ such that $E(g)=\sse(g;\tau(g))$ and we denote by $u_{g}$ an associated normalized eigenfunction for $H(\Add,\cS_{\alpha})+V_{\uB}^{\tau(g)}$. Again Proposition \ref{Agmunif1} shows that this eigenfunction has exponential decay uniformly in $g$: For each $\nu\in (0,\sqrt{\epsilon_{0}})$, we have $C_{0}>0$ that does not depend on $g$ such that
$$\|e^{\nu \, r} u_{g} \|_{L^2_{r}(\Omega_{0})}<C_{0} \  .$$
We use $u_{g}$ as a quasimode for $\FQpol_{g_{0}}^{\tau(g)}$: There exists a constant $C_{1}>0$ that does not depend on $g$ such that
\begin{align*}
 \FQpol_{g_{0}}^{\tau(g)}(u_{g}) &\leq \FQpol_{g}^{\tau(g)}(u_{g})+C_{1}\|g-g_{0}\| 
\\
&<\cE^{*}(g)-\epsilon_{0}+C_{1}\kappa. 
\end{align*}
The min-max Principle and \eqref{E:relationfond} provide $$E(g_{0}) < \cE^{*}(g)-\epsilon_{0}+C_{1}\kappa \ . $$ Let $\epsilon>0$, the continuity of $\cE^{*}(\cdot)$ implies that for $\kappa>0$ small enough there holds $\cE^{*}(g)<\cE^{*}(g_{0})+\epsilon$. We have proved:
$$\exists\epsilon_{0}>0,\,  \exists C_{1}>0,\,  \forall \epsilon>0, \, \exists \kappa_{0}>0, \forall \kappa\in(0,\kappa_{0}),\quad E(g_{0}) < \cE^{*}(g_{0})-\epsilon_{0}+C_{1}\kappa+\epsilon \ . $$
Choosing $\epsilon>0$ and $\kappa>0$ small enough we get a contradiction with \eqref{H:Incas2}.
\end{proof}

\section{Upper bound for small angles}
\label{S:UbSa}
\subsection{An auxiliary problem on a half-line}
\label{S:HL}
Let $L_{r}^{2}(\R_{+})$ be the space of the square-integrable functions for the weight $r\d r$ and let $$B^1_{r}(\Rp) :=\{ u\in L^2_{r}(\R^+), u'\in L^2_{r}(\R^+), r u \in L^2_{r}(\R^+)\} \ .$$ 
We define the 1d quadratic form
$$ \gq_{\tau}(u):=\int_{\R_{+}}\left(|u'(r)|^2+(r-\tau)^2|u(r)|^2\right) r \d r $$
on the domain $B_{r}^{1}(\R_{+})$. 
As we will see later, if $u$ is a function of $L^2(\cS_{\alpha})$ that does not depend on the angular variable and if $b_{2}\neq 0$, $b_{2}^{-1}\FQ_{\bB,\alpha}^{\tau}(u)$ written in polar coordinates degenerates formally toward $\gq_{\tau}(u)$ when $\alpha$ goes to 0. 

We denote by $\gg_{\tau}$ the Friedrichs extension of the quadratic form $\gq_{\tau}$. This operator has been introduced in \cite{Yaf08} and studied in \cite{Pof13-II} as the reduced operator of a 3d magnetic hamiltonian with axisymmetric potential. 

The technics from \cite{BolCa72} show that $\gg_{\tau}$ has compact resolvent. We denote by $\zeta(\tau)$ its first eigenvalue. For all $\tau\in \R$, $\zeta(\tau)$ is a simple eigenvalue and we denote by $z_{\tau}$ an associated normalized eigenfunction. Basic estimates of Agmon show that $z_{\tau}$ has exponential decay. The following properties are shown in \cite{Pof13-II}:

 The function $\tau\mapsto\zeta(\tau)$ reaches its infimum. We denote by 
\begin{equation}
\label{D:xi0}
\Xi_{0}:=\inf_{\tau\in\R}\zeta(\tau) 
\end{equation}
 the infimum. Let $\tau_{0}>0$ be the lowest real  number such that $\zeta(\tau_{0})=\Xi_{0}$. We have 
 \begin{equation}
\label{E:inexi0}
 \Theta_{0}<\Xi_{0}\leq \sqrt{4-\pi} \ . 
 \end{equation}
 Numerical simulations show that $\Xi_{0}\approx 0.8630$.

\subsection{Upper bounds and consequences}
\label{SS:upperboundsmallangles}
Let $\bB=(b_{1},b_{2},b_{3})$ be a magnetic field in $\dS^2$. Due to symmetry we assume $b_{j}\geq0$ for all $j\in\{1,2,3\}$ (see Proposition \ref{P:symmetry}). Recall the quadratic form $\FQpol_{\bB,\alpha}^{\tau}$ associated to $H(\Add, \cS_{\alpha})+V_{\uB}^{\tau}$ in polar coordinates (see \eqref{D:FQpol}). The injection from $\left(B_{r}^{1}(\R_{+}),\|\cdot\|_{L^2_{r}(\R_{+})}\right)$ into $\left(\dom(\FQpol_{{\bf B},\alpha}^{\tau}(u)),\|\cdot\|_{L^2_{r}(\Omega_{0})}\right)$ is an isometry, therefore we can restrict $\FQpol_{{\bf B},\alpha}^{\tau}$ to $B_{r}^{1}(\R_{+})$ and in the following for $u\in B^1_{r}(\R_{+})$ we denote again by $u$ the associated function defined on $\Omega_{0}$. Assume $b_{2}>0$, that means that the magnetic field is not tangent to the symmetry plane of the wedge. For $u\in B^1_{r}(\R_{+})$ we have formally that $b_{2}^{-1}\FQpol_{\bB,\alpha}^{ \tau\sqrt{b_{2}}}(u)$ goes to $\gq_{\tau}(u)$ when $\alpha$ goes to 0. 

The following lemma makes this argument more rigorous: 
\begin{lem}
\label{L:evalqm}
Let $\bB\in \dS^2$ with $b_{2}>0$. For $u\in B_{r}^{1}(\R_{+})$ we denote by $u^{\rm sc}(r):=b_{2}^{1/2}u(b_{2}^{1/2}r)$ the associated rescaled function. We have $\|u^{\rm sc}\|_{L^2_{r}(\R_{+})}=\|u\|_{L^2_{r}(\R_{+})}$ and 
\begin{multline}
\label{E:evalqm}
\FQpol_{{\bf B},\alpha}^{\tau\sqrt{b_{2}}}(u^{\rm sc})=b_{2}\gq_{\tau}(u)+\frac{\alpha^2}{12}\| r u\|^2_{L^2_{r}(\Rp)}\frac{b_{3}^2}{b_{2}}
\\
+\frac{1}{2}(1-\sinc \alpha)\|r u\|^2_{L^2_{r}(\Rp)}\frac{b_{1}^2-b_{2}^2}{b_{2}}+2\tau b_{2}\left(1-\sinc \frac{\alpha}{2}\right)\|\sqrt{r} u\|^2_{L^2_{r}(\Rp)} \ .
\end{multline}
with $\sinc\alpha:=\frac{\sin\alpha}{\alpha}$.
\end{lem}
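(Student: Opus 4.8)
The plan is to exploit that $u^{\rm sc}$ is, by construction, a function of $r$ alone, so the whole computation collapses to one-dimensional integrals in $r$ weighted by elementary integrals in $\phi$ over $(-\tfrac12,\tfrac12)$. First I would record the norm identity: writing $\|u^{\rm sc}\|^2_{L^2_{r}(\Rp)}=\int_{\Rp}b_{2}|u(b_{2}^{1/2}r)|^2\,r\d r$ and substituting $s=b_{2}^{1/2}r$ gives $\|u^{\rm sc}\|_{L^2_{r}(\Rp)}=\|u\|_{L^2_{r}(\Rp)}$; the same change of variables, with the appropriate powers of $b_{2}$, is applied term by term at the end. Since $\partial_{\phi}u^{\rm sc}=0$, the angular term $\frac{1}{\alpha^2 r^2}|\partial_{\phi}u^{\rm sc}|^2$ in \eqref{D:FQpol} drops out entirely and only the radial kinetic term and the electric potential survive.

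For the radial kinetic term I would expand
\begin{equation*}
|(\partial_{r}-i\alpha r\phi b_{3})u^{\rm sc}|^2=|(u^{\rm sc})'|^2+\alpha^2 r^2\phi^2 b_{3}^2|u^{\rm sc}|^2-2\alpha r\phi b_{3}\Im\big((u^{\rm sc})'\,\overline{u^{\rm sc}}\big).
\end{equation*}
Because $u^{\rm sc}$ depends only on $r$, the last term is an odd function of $\phi$ and integrates to zero on $(-\tfrac12,\tfrac12)$, while $\int_{-1/2}^{1/2}\phi^2\d\phi=\tfrac{1}{12}$. After the rescaling $s=b_{2}^{1/2}r$ (which turns $\|(u^{\rm sc})'\|^2_{L^2_{r}(\Rp)}$ into $b_{2}\|u'\|^2_{L^2_{r}(\Rp)}$ and $\|r\,u^{\rm sc}\|^2_{L^2_{r}(\Rp)}$ into $b_{2}^{-1}\|r\,u\|^2_{L^2_{r}(\Rp)}$) this contributes exactly $b_{2}\|u'\|^2_{L^2_{r}(\Rp)}+\frac{\alpha^2}{12}\frac{b_{3}^2}{b_{2}}\|r\,u\|^2_{L^2_{r}(\Rp)}$, which accounts for the $b_{3}$-term of \eqref{E:evalqm} and for the $\|u'\|^2$ part of $b_{2}\gq_{\tau}(u)$.

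The heart of the computation is the electric potential. Expanding the square in $\Vpol_{\bB,\alpha}^{\tau\sqrt{b_{2}}}(r,\phi)=(r\cos(\phi\alpha)b_{2}-r\sin(\phi\alpha)b_{1}-\tau\sqrt{b_{2}})^2$ and integrating in $\phi$, the only elementary integrals needed are $\int_{-1/2}^{1/2}\cos^2(\phi\alpha)\d\phi=\frac{1+\sinc\alpha}{2}$, $\int_{-1/2}^{1/2}\sin^2(\phi\alpha)\d\phi=\frac{1-\sinc\alpha}{2}$ and $\int_{-1/2}^{1/2}\cos(\phi\alpha)\d\phi=\sinc\frac{\alpha}{2}$, all cross terms carrying $\sin(2\phi\alpha)$ or $\sin(\phi\alpha)$ being odd and vanishing. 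This yields an effective radial potential which, after the same rescaling $s=b_{2}^{1/2}r$, produces the three weighted quantities $\|r\,u\|^2_{L^2_{r}(\Rp)}$, $\|\sqrt{r}\,u\|^2_{L^2_{r}(\Rp)}$ and $\|u\|^2_{L^2_{r}(\Rp)}$ with explicit coefficients.

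The final step, and the only place requiring some care, is the algebraic reassembly: I would add and subtract $b_{2}\int_{\Rp}(r-\tau)^2|u|^2\,r\d r=b_{2}\big(\|r\,u\|^2-2\tau\|\sqrt{r}\,u\|^2+\tau^2\|u\|^2\big)$ so as to isolate $b_{2}\gq_{\tau}(u)$, and then check that the remaining coefficients match \eqref{E:evalqm}. The $\|\sqrt{r}\,u\|^2$ and $\|u\|^2$ coefficients come out immediately ($-2\tau b_{2}\sinc\frac{\alpha}{2}$ and $\tau^2 b_{2}$ respectively), while for $\|r\,u\|^2$ one verifies the identity $\frac{1}{2}\big(b_{2}(1+\sinc\alpha)+\frac{b_{1}^2}{b_{2}}(1-\sinc\alpha)\big)=b_{2}+\frac{1}{2}(1-\sinc\alpha)\frac{b_{1}^2-b_{2}^2}{b_{2}}$, which reproduces the correction term of \eqref{E:evalqm}. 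The main obstacle is therefore not conceptual but bookkeeping: tracking the powers of $b_{2}$ generated by the rescaling and confirming that the $\phi$-averaged potential collapses to precisely the form $\gq_{\tau}$ plus these three correction terms.
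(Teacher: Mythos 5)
Your proposal is correct and follows essentially the same route as the paper: evaluation of the quadratic form on functions of $r$ alone, elimination of the odd-in-$\phi$ cross terms, the same elementary angular integrals, and the rescaling $s=b_{2}^{1/2}r$. The only difference is bookkeeping order — the paper subtracts $(rb_{2}-\tau)^2$ from the potential \emph{before} integrating in $\phi$, whereas you expand the full square and reassemble $b_{2}\gq_{\tau}(u)$ at the end — and the coefficient identity you check is exactly the one implicit in the paper's computation.
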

\begin{proof}
 We evaluate $\FQpol_{{\bf B},\alpha}^{\tau}(u)$ for $u\in B_{r}^{1}(\Rp)$:
 \begin{multline*}
 \FQpol_{{\bf B},\alpha}^{\tau}(u)=\int_{\Rp}\left(|u'(r)|^2+(rb_{2}-\tau)|u(r)|^2\right)r \d r
 \\+\int_{\Omega_{0}}\alpha^2\phi^2 b_{3}^2r^2|u(r)|^2 r\d r\d\phi+\int_{\Omega_{0}}\left(\Vpol_{\uB}^{\tau}(r,\phi)-(rb_{2}-\tau)^2\right)|u(r)|^2 r\d r \d\phi \ .
 \end{multline*}
 We have
\begin{equation}
\label{Eval_QM1}
\int_{\Omega_{0}}\alpha^2 r^2 \phi^2 b_{3}^2 |u(r)|^2 r\d r \d\phi=\frac{\alpha^2}{12}\| r u\|_{L^2_{r}(\Rp)}^2b_{3}^2 \ . 
\end{equation}
Elementary computations yield: 
 \begin{align*}
 \Vpol_{\uB}^{\tau}(r,\phi)-(rb_{2}-\tau)^2&=r^2 \sin^2(\alpha\phi)(b_{1}^2-b_{2}^2)-2rb_{2}\tau\left(\cos(\alpha\phi)-1\right)^2
 \\
 & \quad - 2r b_{1}\sin(\alpha\phi)\left(r b_{2}\cos(\alpha\phi)-\tau\right) \ .
 \end{align*}
 Since the term $- 2r b_{1}\sin(\alpha\phi)\left(r b_{2}\cos(\alpha\phi)-\tau\right)$ is odd with respect to $\phi$, its integral on $\Omega_{0}$ vanishes. For the other terms we use:
 \begin{equation*}
 \label{inteta1}
 \int_{-1/2}^{1/2}\sin^2(\alpha\phi)\d\phi=\tfrac{1}{2}(1-\sinc\alpha) \quad \mbox{and}\quad \int_{-1/2}^{1/2}\big(\cos(\alpha\phi)-1\big)\d\phi=\sinc\tfrac{\alpha}{2}-1 \ .
\end{equation*}
We deduce for all $u\in B^1_{r}(\Rp)$ and $\tau\in \R$:
\begin{multline*}
\int_{\Omega_{0}}\left(\Vpol_{\uB}^{\tau}(r,\phi)-(rb_{2}-\tau)^2\right)|u(r)|^2 r\d r \d\phi =
\\
\tfrac{1}{2}(1-\sinc \alpha)\|r u\|^2_{L^2_{r}(\Rp)}(b_{1}^2-b_{2}^2)+2\tau\left(1-\sinc \tfrac{\alpha}{2}\right)\|\sqrt{r} u\|^2_{L^2_{r}(\Rp)}b_{2} \ 
\end{multline*}
and therefore (note that we have make the change $\tau\to\tau\sqrt{b_{2}}$):
\begin{multline}
\label{E:evalqmlem}
\FQpol_{{\bf B},\alpha}^{\tau\sqrt{b_{2}}}(u)=\int_{\Rp}\left(|u'(r)|^2+(rb_{2}-\tau\sqrt{b_{2}})^2|u(r)|^2\right)r\d r+\tfrac{\alpha^2}{12}\| r u\|^2_{L^2_{r}(\Rp)}b_{3}^2
\\
+\tfrac{1}{2}(1-\sinc \alpha)\|r u\|^2_{L^2_{r}(\Rp)}(b_{1}^2-b_{2}^2)+2\tau\left(1-\sinc \tfrac{\alpha}{2}\right)\|\sqrt{r} u\|^2_{L^2_{r}(\Rp)}b_{2}^{3/2} \ .
\end{multline}
Let $u^{\rm sc}(r):=b_{2}^{1/2}u(b_{2}^{1/2}r)$. An elementary scaling provides 
$$\int_{\Rp}\left(|(u^{\rm sc})'(r)|^2+(rb_{2}-\tau\sqrt{b_{2}})^2|u^{\rm sc}(r)|^2\right)r\d r=b_{2}\gq_{\tau}(u) \ . $$
Moreover we have
$$ \|r u^{\rm sc}\|^2_{L^2_{r}(\R_{+})}=b_{2}^{-1}\|ru\|^2_{L^2_{r}(\R_{+})}  \quad \mbox{and} \quad \|\sqrt{r} u^{\rm sc}\|^2_{L^2_{r}(\R_{+})}=b_{2}^{-1/2}\|\sqrt{r}u\|^2_{L^2_{r}(\R_{+})}  \ ,$$
therefore we deduce \eqref{E:evalqm} from \eqref{E:evalqmlem}.
\end{proof}

\begin{prop}
Let $\bB\in \dS^2$ with $b_{2}>0$. There exists $C(\bB)>0$ such that 
\begin{equation}
\label{}
\forall \alpha\in (0,\pi), \quad E(\bB,\cW_{\alpha}) \leq b_{2}\Xi_{0}+C(\bB)\alpha^2 \ .
\end{equation}
\end{prop}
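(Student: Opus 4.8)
The plan is to use the fundamental relation \eqref{E:relationfond} together with the explicit energy computation of Lemma \ref{L:evalqm}, taking as test function the one-dimensional ground state of the auxiliary operator $\gg_{\tau_{0}}$. Since $E(\bB,\cW_{\alpha})=\inf_{\tau\in\R}\sse(\bB,\cS_{\alpha};\tau)$, it suffices to exhibit one value of the Fourier parameter and one admissible function whose Rayleigh quotient is bounded by $b_{2}\Xi_{0}+C(\bB)\alpha^{2}$. I would choose the Fourier parameter $\tau=\tau_{0}\sqrt{b_{2}}$, where $\tau_{0}>0$ is the minimizer defined in Subsection \ref{S:HL} realizing $\zeta(\tau_{0})=\Xi_{0}$, and take $z_{\tau_{0}}$, the normalized eigenfunction of $\gg_{\tau_{0}}$ associated with $\Xi_{0}$, viewed through the isometry recalled in Subsection \ref{SS:upperboundsmallangles} as a function on $\Omega_{0}$ depending only on $r$.

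First I would form the rescaled function $u^{\rm sc}(r):=b_{2}^{1/2}z_{\tau_{0}}(b_{2}^{1/2}r)$, which is again normalized in $L^{2}_{r}(\Rp)$ and lies in the form domain of $\FQpol_{\bB,\alpha}^{\tau_{0}\sqrt{b_{2}}}$. Applying Lemma \ref{L:evalqm} with $\tau=\tau_{0}$ and using that $z_{\tau_{0}}$ realizes the lowest eigenvalue, so that $\gq_{\tau_{0}}(z_{\tau_{0}})=\zeta(\tau_{0})=\Xi_{0}$ by \eqref{D:xi0}, yields
\begin{multline*}
\FQpol_{\bB,\alpha}^{\tau_{0}\sqrt{b_{2}}}(u^{\rm sc})=b_{2}\Xi_{0}
+\frac{\alpha^{2}}{12}\|r z_{\tau_{0}}\|^{2}_{L^2_{r}(\Rp)}\frac{b_{3}^{2}}{b_{2}}
\\
+\frac{1}{2}(1-\sinc\alpha)\|r z_{\tau_{0}}\|^{2}_{L^2_{r}(\Rp)}\frac{b_{1}^{2}-b_{2}^{2}}{b_{2}}
+2\tau_{0}b_{2}\Big(1-\sinc\tfrac{\alpha}{2}\Big)\|\sqrt{r} z_{\tau_{0}}\|^{2}_{L^2_{r}(\Rp)}\,.
\end{multline*}

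Next I would control the three correction terms uniformly on $(0,\pi)$. The exponential decay of $z_{\tau_{0}}$ recalled in Subsection \ref{S:HL} ensures that the moments $\|r z_{\tau_{0}}\|_{L^2_{r}(\Rp)}$ and $\|\sqrt{r} z_{\tau_{0}}\|_{L^2_{r}(\Rp)}$ are finite. The first term is manifestly $O(\alpha^{2})$; for the other two I would invoke the elementary inequalities $0\le 1-\sinc\alpha\le \tfrac{\alpha^{2}}{6}$ and $0\le 1-\sinc\tfrac{\alpha}{2}\le \tfrac{\alpha^{2}}{24}$, valid for all $\alpha\ge 0$ (they follow from $\sin t\ge t-\tfrac{t^{3}}{6}$), so that each is likewise bounded by a constant times $\alpha^{2}$. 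Collecting these bounds produces a constant $C(\bB)$, depending only on $b_{1},b_{2},b_{3},\tau_{0}$ and the two moments of $z_{\tau_{0}}$, such that $\FQpol_{\bB,\alpha}^{\tau_{0}\sqrt{b_{2}}}(u^{\rm sc})\le b_{2}\Xi_{0}+C(\bB)\alpha^{2}$ for every $\alpha\in(0,\pi)$.

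Finally, since $\|u^{\rm sc}\|_{L^2_{r}(\Rp)}=1$, the min-max principle gives $\sse(\bB,\cS_{\alpha};\tau_{0}\sqrt{b_{2}})\le \FQpol_{\bB,\alpha}^{\tau_{0}\sqrt{b_{2}}}(u^{\rm sc})$, and \eqref{E:relationfond} then yields $E(\bB,\cW_{\alpha})\le b_{2}\Xi_{0}+C(\bB)\alpha^{2}$. There is no genuine obstacle here: the work of selecting and evaluating the quasimode has already been packaged into Lemma \ref{L:evalqm}, so the only real content is matching the shift $\tau=\tau_{0}\sqrt{b_{2}}$ to the rescaling $u^{\rm sc}$ and establishing the uniform $\sinc$ estimates. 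The one mild point to watch is that the sign of $b_{1}^{2}-b_{2}^{2}$ is uncontrolled, so I would bound that middle term by its absolute value when assembling $C(\bB)$.
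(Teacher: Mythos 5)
Your proposal is correct and coincides with the paper's own proof: the same rescaled quasimode $z_{\tau_{0}}^{\rm sc}(r)=b_{2}^{1/2}z_{\tau_{0}}(rb_{2}^{1/2})$ at the Fourier parameter $\tau_{0}\sqrt{b_{2}}$, the same use of Lemma \ref{L:evalqm} with $\gq_{\tau_{0}}(z_{\tau_{0}})=\Xi_{0}$, the same elementary $\sinc$ bounds \eqref{E:ineqelementaires}, and the same conclusion via the min-max principle and \eqref{E:relationfond}. Your remark about bounding $b_{1}^{2}-b_{2}^{2}$ by its absolute value is exactly what the paper does when assembling $C(\bB)$.
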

\begin{proof}
We recall that $z_{\tau}\in B^1_{r}(\R_{+})$ is a normalized eigenfunction associated to $\zeta(\tau)$ for the operator $\gg_{\tau}$ (see Subsection \ref{S:HL}). We define 
$$z_{\tau_{0}}^{\rm sc}(r):=b_{2}^{1/2}z_{\tau_{0}}(rb_{2}^{1/2}) $$
where $\tau_{0}\in \R$ satisfies $\zeta(\tau_{0})=\Xi_{0}$ (see \eqref{S:HL}). For all $\alpha>0$ we have: 
\begin{equation}
\label{E:ineqelementaires}
0\leq 1-\sinc\alpha\leq\frac{\alpha^2}{6} \quad \mbox{and}\quad 0\leq1-\sinc\frac{\alpha}{2} \leq \frac{\alpha^2}{24} \ . 
\end{equation}
We have $\gq_{\tau_{0}}(z_{\tau_{0}})=\Xi_{0}$, therefore Lemma \ref{L:evalqm} and \eqref{E:ineqelementaires} provides 
$$\FQpol_{\bB,\alpha}^{\tau_{0}\sqrt{b_{2}}}(z_{\tau_{0}}^{\rm sc})\leq b_{2}\Xi_{0}+\frac{\alpha^2}{12}\left(\frac{b_{3}^2+|b_{1}^2-b_{2}^2|}{b_{2}}\|rz_{\tau_{0}}\|^2_{L^2_{r}}+\tau_{0}b_{2}\|\sqrt{r}z_{\tau_{0}}\|^2_{L^2_{r}(\R_{+})}\right) \ .$$
Since $\|z_{\tau_{0}}^{\rm sc}\|_{L^2_{r}(\Omega_{0})}=\|z_{\tau_{0}}\|_{L^2_{r}(\R_{+})}=1$ the min-max principle provides:
$$\exists C(\bB)>0, \ \forall \alpha\in(0,\pi), \quad \sse(\bB,\cS_{\alpha};\tau_{0}\sqrt{b_{2}}) \leq b_{2}\Xi_{0}+C(\bB)\alpha^2 \ . $$
We deduce the proposition with \eqref{E:relationfond}.
\end{proof}
As a direct consequence we get 
\begin{cor}
\label{C:ineqsmallangle}
Let $\bB\in \dS^2$ with $b_{2}>0$. We have the following upper bound:
\begin{equation}
\label{Ub:lambdaxi0}
\limsup_{\alpha\to0}E(\bB,\cW_{\alpha}) \leq b_{2}\Xi_{0} \ .
\end{equation}
\end{cor}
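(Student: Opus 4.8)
The plan is to read off the statement directly from the Proposition that immediately precedes it, which already does all the work. That Proposition furnishes a constant $C(\bB)>0$ with
$$
E(\bB,\cW_{\alpha}) \leq b_{2}\Xi_{0}+C(\bB)\alpha^2, \qquad \forall \alpha\in(0,\pi).
$$
Since the right-hand side is an explicit function of $\alpha$ whose correction term $C(\bB)\alpha^2$ tends to $0$ as $\alpha\to0$, I would simply take the limit superior of both sides: the left-hand side yields $\limsup_{\alpha\to0}E(\bB,\cW_{\alpha})$, while the right-hand side converges to $b_{2}\Xi_{0}$. Monotonicity of $\limsup$ with respect to the pointwise inequality then gives \eqref{Ub:lambdaxi0}. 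This is the entire argument; the corollary is genuinely a passage to the limit.

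Were the Proposition not available, I would reconstruct its proof, which is where the actual content lives. First I would select the optimal Fourier parameter $\tau_{0}$ realizing $\zeta(\tau_{0})=\Xi_{0}$ and the associated normalized ground state $z_{\tau_{0}}\in B^1_{r}(\Rp)$ of the auxiliary one-dimensional operator $\gg_{\tau_{0}}$ from Subsection \ref{S:HL}. I would then use its rescaled version $z_{\tau_{0}}^{\rm sc}(r):=b_{2}^{1/2}z_{\tau_{0}}(rb_{2}^{1/2})$ as a test function, viewing it as an angle-independent function on $\Omega_{0}$ via the isometric injection of $B^1_{r}(\Rp)$ into $\dom(\FQpol_{\bB,\alpha}^{\tau})$. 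The key step is the exact quadratic-form identity of Lemma \ref{L:evalqm}, which expresses $\FQpol_{\bB,\alpha}^{\tau_{0}\sqrt{b_{2}}}(z_{\tau_{0}}^{\rm sc})$ as $b_{2}\gq_{\tau_{0}}(z_{\tau_{0}})=b_{2}\Xi_{0}$ plus three correction terms carrying the factors $\tfrac{\alpha^2}{12}$, $\tfrac12(1-\sinc\alpha)$, and $2(1-\sinc\tfrac{\alpha}{2})$.

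The heart of the estimate is then to show these corrections are $O(\alpha^2)$ with a constant depending only on $\bB$. For this I would invoke the elementary bounds \eqref{E:ineqelementaires}, namely $0\leq 1-\sinc\alpha\leq \tfrac{\alpha^2}{6}$ and $0\leq 1-\sinc\tfrac{\alpha}{2}\leq\tfrac{\alpha^2}{24}$, together with the finiteness of the weighted norms $\|r z_{\tau_{0}}\|_{L^2_{r}(\Rp)}$ and $\|\sqrt{r}\,z_{\tau_{0}}\|_{L^2_{r}(\Rp)}$. The main (and only genuine) obstacle is precisely guaranteeing that these weighted norms are finite, so that the packaged constant $C(\bB)$ makes sense; this is exactly where the Agmon exponential decay of $z_{\tau_{0}}$ recalled in Subsection \ref{S:HL} is used. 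Once the correction is controlled uniformly for $\alpha\in(0,\pi)$, the min-max principle bounds $\sse(\bB,\cS_{\alpha};\tau_{0}\sqrt{b_{2}})$, and the fibration relation \eqref{E:relationfond} transfers this to $E(\bB,\cW_{\alpha})$, recovering the Proposition and hence, upon taking $\limsup_{\alpha\to0}$, the stated corollary.
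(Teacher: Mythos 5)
Your argument is correct and is exactly the paper's proof: the corollary is stated there as a direct consequence of the preceding Proposition, obtained by letting $\alpha\to0$ in the bound $E(\bB,\cW_{\alpha})\leq b_{2}\Xi_{0}+C(\bB)\alpha^2$. Your reconstruction of the Proposition itself (rescaled test function $z_{\tau_{0}}^{\rm sc}$, Lemma \ref{L:evalqm}, the $\sinc$ bounds, min-max, and \eqref{E:relationfond}) also matches the paper's argument.
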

Numerical computations show that $E(\bB,\cW_{\alpha})$ seems to go to $b_{2}\Xi_{0}$ when $\alpha$ goes to 0 (see Section \ref{S:numerique} and \cite[Section 6.4]{Popoff}). This question remains open. However the upper bound \eqref{Ub:lambdaxi0} is sufficient to give a comparison between the spectral quantity associated to an edge and the one coming from regular model problem:

\begin{theo}
\label{T:inegstricte}
Let $\bB\in \dS^2$ with $b_{2}>0$. Then there exists $\alpha(\bB)\in(0,\pi)$ such that for all $\alpha\in(0,\alpha(\bB))$ we have $E(\bB,\cW_{\alpha})<E^{*}(\bB,\cW_{\alpha})$. 
\end{theo}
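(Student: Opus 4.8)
The plan is to read off the behaviour of the two sides of \eqref{E:ineqwedge} as $\alpha\to0$ and to produce a strict gap between their limits. The upper bound is already available from Corollary~\ref{C:ineqsmallangle}, namely $\limsup_{\alpha\to0}E(\bB,\cW_{\alpha})\leq b_{2}\Xi_{0}$, so the task reduces to understanding $E^{*}(\bB,\cW_{\alpha})$ for small $\alpha$ and comparing the two limits.

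First I would compute $\lim_{\alpha\to0}E^{*}(\bB,\cW_{\alpha})$. Since $E^{*}(\bB,\cW_{\alpha})=\sigma(\min\{\theta^{+},\theta^{-}\})$, it suffices to follow $\theta^{\pm}$. As $\alpha\to0$ both faces $\Hsup_{\alpha}$ and $\Hslow_{\alpha}$ degenerate to the plane $\{x_{2}=0\}$, whose unit normal is $(0,1,0)$; as the unoriented angle between the unit field $\bB$ and a plane of normal $\nu$ equals $\arcsin|\bB\cdot\nu|$, a direct computation gives $\sin\theta^{\pm}=|b_{1}\sin\tfrac{\alpha}{2}\mp b_{2}\cos\tfrac{\alpha}{2}|$, both tending to $b_{2}$. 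Hence $\min\{\theta^{+},\theta^{-}\}\to\arcsin b_{2}$, and the continuity of $\sigma$ yields
\[
\lim_{\alpha\to0}E^{*}(\bB,\cW_{\alpha})=\sigma(\arcsin b_{2}).
\]

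The key analytic ingredient is the lower bound $\sigma(\theta)\geq\sin\theta$ on $[0,\tfrac{\pi}{2}]$, which I would obtain from the normal component of the field. Rotating so that the boundary of the half-space is $\{x_{3}=0\}$ with $\bB\cdot(0,0,1)=\sin\theta$, the two tangential magnetic momenta $P_{j}:=D_{x_{j}}-a_{j}$, $j=1,2$, produce no boundary term under integration by parts and satisfy $[P_{1},P_{2}]=i\sin\theta$; expanding $\|(P_{1}-iP_{2})u\|^{2}\geq0$ then gives $\|P_{1}u\|^{2}+\|P_{2}u\|^{2}\geq\sin\theta\,\|u\|^{2}$, and dropping the nonnegative normal term $\|P_{3}u\|^{2}$ yields $\sigma(\theta)\geq\sin\theta$. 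In particular $\sigma(\arcsin b_{2})\geq b_{2}$.

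It remains to combine these facts. By \eqref{E:inexi0} we have $\Xi_{0}\leq\sqrt{4-\pi}<1$, and since $b_{2}>0$ this gives the strict chain
\[
\limsup_{\alpha\to0}E(\bB,\cW_{\alpha})\leq b_{2}\Xi_{0}<b_{2}\leq\sigma(\arcsin b_{2})=\lim_{\alpha\to0}E^{*}(\bB,\cW_{\alpha}).
\]
Setting $\delta:=\sigma(\arcsin b_{2})-b_{2}\Xi_{0}>0$, Corollary~\ref{C:ineqsmallangle} provides $\alpha_{1}$ with $E(\bB,\cW_{\alpha})<b_{2}\Xi_{0}+\tfrac{\delta}{2}$ for $\alpha<\alpha_{1}$, while the limit above provides $\alpha_{2}$ with $E^{*}(\bB,\cW_{\alpha})>b_{2}\Xi_{0}+\tfrac{\delta}{2}$ for $\alpha<\alpha_{2}$; then $\alpha(\bB):=\min(\alpha_{1},\alpha_{2})$ works. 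The only genuine difficulty is the bound $\sigma(\theta)\geq\sin\theta$; once it (or the weaker $\sigma(\theta)\geq\max(\Theta_{0},\sin\theta)$) is granted, the rest is a limit comparison exploiting $\Xi_{0}<1$.
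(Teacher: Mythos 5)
Your proof is correct, and its skeleton is the same as the paper's: both arguments compare $\limsup_{\alpha\to0}E(\bB,\cW_{\alpha})\leq b_{2}\Xi_{0}$ (Corollary \ref{C:ineqsmallangle}) with the small-angle behaviour of $E^{*}(\bB,\cW_{\alpha})=\sigma(\theta^{0})$, $\theta^{0}=\min\{\theta^{+},\theta^{-}\}\to\arcsin b_{2}$, and close the gap using $\Xi_{0}<1$. Where you genuinely diverge is in the key lower bound for $\sigma$. The paper quotes the Helffer--Morame inequality $\sigma(\theta)\geq\sqrt{\Theta_{0}^2\cos^2\theta+\sin^2\theta}$ from \cite{HeMo02}, which gives $\liminf_{\alpha\to0}E^{*}(\bB,\cW_{\alpha})\geq\sqrt{(1-\Theta_{0}^2)b_{2}^2+\Theta_{0}^2}$ and the strict inequality $\Xi_{0}b_{2}<\sqrt{(1-\Theta_{0}^2)b_{2}^2+\Theta_{0}^2}$, valid for every $b_{2}\in[0,1]$. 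You instead prove from scratch the weaker bound $\sigma(\theta)\geq\sin\theta$ by a commutator argument: after flattening the boundary to $\{x_{3}=0\}$, the tangential momenta $P_{1},P_{2}$ integrate by parts without boundary terms, $[P_{1},P_{2}]=iB_{3}$ with $|B_{3}|=\sin\theta$, and expanding $\|(P_{1}\mp iP_{2})u\|^2\geq0$ on the form core $C_{0}^\infty(\overline{\Pi})$ (the same density fact the paper uses in \eqref{E:energyqm}) then dropping $\|P_{3}u\|^2$ yields the claim. Your bound is strictly weaker --- it gives nothing at $\theta=0$ --- but under the standing hypothesis $b_{2}>0$ the chain $b_{2}\Xi_{0}<b_{2}\leq\sigma(\arcsin b_{2})$ suffices, so the proof is complete. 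What the paper's route buys is a citation-only argument whose gap stays bounded below by a positive quantity even as $b_{2}\to0$; what your route buys is self-containedness, the only external inputs being Corollary \ref{C:ineqsmallangle} and the continuity of $\sigma$. It is worth noting that the cheap bound $\sigma(\theta)\geq\sigma(0)=\Theta_{0}$ from monotonicity would \emph{not} suffice here, since $b_{2}\Xi_{0}>\Theta_{0}$ for $b_{2}$ close to $1$; some angle-dependent lower bound, yours or the one cited in the paper, is truly needed.
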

\begin{proof}
We introduce $\theta^{0}:=\min\{\theta^{+},\theta^{-}\}$ ($\theta^{0}$ depends on $\alpha$ and $\bB$). For $\alpha\in (0,\pi)$ we have $E^{*}(\bB,\cW_{\alpha})=\sigma(\theta^{0})$. We recall the inequality from \cite[Section 3.4]{HeMo02}: 
$$\sigma(\theta^{0}) \geq \sqrt{\Theta_{0}^2\cos^2(\theta^{0})+\sin^2(\theta^{0})} \ . $$
Since $\theta^{0}$ goes to $\arcsin b_{2}$ when $\alpha$ goes to 0, we get 
$$\liminf_{\alpha\to0} E^{*}(\bB,\cW_{\alpha})=\liminf_{\alpha\to0} \sigma(\theta^{0}) \geq \sqrt{(1-\Theta_{0}^2)b_{2}^2+\Theta_{0}^2} \ . $$
Since $\Xi_{0}\in (0,1)$ (see Subsection \ref{E:inexi0}), we get: 
$$\forall b_{2}\in [0,1], \quad \Xi_{0}b_{2} < \sqrt{(1-\Theta_{0}^2)b_{2}^2+\Theta_{0}^2} $$
and we deduce from Corollary \ref{C:ineqsmallangle}:
$$\limsup_{\alpha\to0} E(\bB,\cW_{\alpha}) < \liminf_{\alpha\to0}E^{*}(\bB,\cW_{\alpha}) \ . $$
The theorem follows.
\end{proof}
\begin{rem}
It is possible to use gaussian quasimodes in \eqref{E:evalqm} and to deduce for $E(\bB,\cW_{\alpha})$ a polynomial in $\alpha$ upper bound with explicit constants (see \cite[Section 6.3]{Popoff}). This allows to get analytic value of $\alpha(\bB)$, for example we get with numerical approximations $\alpha(\bB) \geq 0.38\pi$ for the magnetic field $\bB=(0,1,0)$ normal to the plane of symmetry.
\end{rem}
\begin{rem}
The previous theorem remains true in the special case $b_{2}=0$ (see \cite[Section 7]{Popoff}) but the proof is different since the limit operator when $\alpha$ goes to 0 is not anymore the operator $\gq_{\tau}$ introduced in Section \ref{S:HL}.
\end{rem}

\section{Numerical simulations}
\label{S:numerique}
Let $C:=(0,L)^2$ be the square of length $L$. We perform a rotation by $-\frac{\pi}{4}$ around the origin and the scaling $X_{2}:=x_{2}\tan\frac{\alpha}{2}$ along the $x_{2}$-axis. The image of $C$ by these transformations is a rhombus of opening $\alpha$ denoted by $\Los(\alpha,L)$. The length of the diagonal supported by the $x_{1}$-axis is $\sqrt{2}L$. Using the finite element library M\'elina (\cite{Melina++}), we compute the first eigenpairs of $(-i\nabla-\Add)+V_{\uB}^{\tau}$ on $\Los(\alpha,L)$ with a Dirichlet condition on the artificial boundary $\{\partial\Los(\alpha,L)\cap \{x_{1}>\frac{1}{\sqrt{2}}L\}$. We denote by $\breve{\sse}(\bB,\cS_{\alpha};\tau)$ the numerical approximation of the first eigenvalue of this operator. For $L$ large, $\breve{\sse}(\bB,\cS_{\alpha};\tau)$ is a numerical approximation of $\sse(\bB,\cS_{\alpha};\tau)$. We refer to \cite[Annex C and Chapter 5]{Popoff} for more details about the meshes and the degree of the approximations we have used.

We make numerical simulations for the magnetic field $\bB=(\frac{1}{\sqrt{2}},\frac{1}{\sqrt{2}},0)$ which is normal to the edge. An associated linear potential is $\bA(x_{1},x_{2},x_{3})=(0,0,-\frac{x_{1}}{\sqrt{2}}+\frac{x_{2}}{\sqrt{2}})$ and we have 
$$H(\Add, \cS_{\alpha})+V_{\uB}^{\tau}=-\Delta+(\tfrac{x_{1}}{\sqrt{2}}-\tfrac{x_{2}}{\sqrt{2}}-\tau)^2 \ .$$
We notice that in that case the reduced operator on $\cS_{\alpha}$ is real and therefore its eigenfunctions have real values. For numerical simulations of eigenfunction with complex values, see \cite[Section 7]{Pof13T}.

\newpage
\begin{figure}[ht]
\begin{center}
\includegraphics[keepaspectratio=true,width=17cm]{./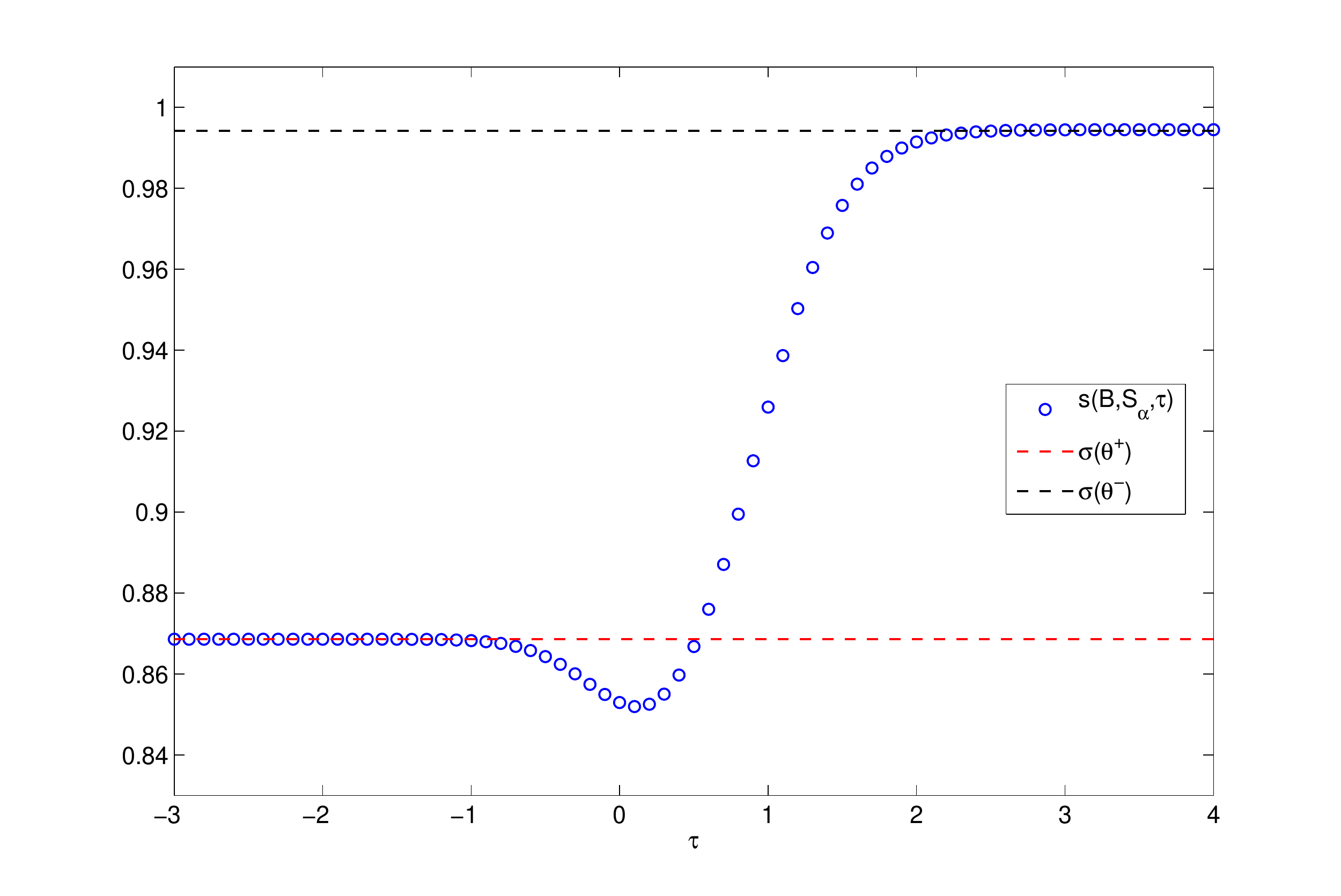}
\caption{Magnetic field: $\bB=(\frac{1}{\sqrt{2}},\frac{1}{\sqrt{2}},0)$. Opening angle: $\alpha=\frac{4\pi}{5}$. The numerical approximation of $\sse(\bB,\cS_{\alpha};\tau)$ versus $\tau$ compared with $\sigma(\theta^{+})$ and $\sigma(\theta^{-})$.}
\label{F1}
\end{center}
\end{figure}
On Figure \ref{F1} we have set $\alpha=\frac{4\pi}{5}$: the magnetic field is ingoing. In that case we have $\theta^{+}=\frac{3\pi}{20}$ and $\theta^{-}=\frac{7\pi}{20}$. We have shown $\breve{\sse}(\bB,\cS_{\alpha};\tau)$ for $\tau=\frac{k}{10}$ with $-30\leq k \leq 40$. We have also shown $\sigma(\theta^{+})$ and $\sigma(\theta^{-})$ where the numerical approximations of $\sigma(\cdot)$ comes from \cite{BoDauPopRay12}. $\breve{\sse}(\bB,\cS_{\alpha};\tau)$ seems to converge to $\sigma(\theta^{\mp})$ when $\tau$ goes to $\pm\infty$ in agreement with Proposition \ref{P:liminfTcase}. Moreover $\tau\mapsto \breve{\sse}(\bB,\cS_{\alpha};\tau)$ reaches its infimum and this infimum is strictly below $\sigma(\theta^{+})=E^{*}(\bB,\cW_{\alpha})$. Therefore we think that \eqref{E:ineqwedge} is strict for these values of $\bB$ and $\alpha$.

On figure \ref{F2} we show normalized eigenfunctions of $(-i\nabla-\Add)+V_{\uB}^{\tau}$ on $\Los(\frac{4\pi}{5},20)$ associated to $\breve{\sse}(\bB,\cS_{\frac{4\pi}{5}};\tau)$ for $\tau=k$, $-3\leq k \leq 4$. We see that the eigenfunctions are localized near the line $\Upsilon$ where the potential $V_{\uB}^{\tau}$ vanishes.

\newpage
\begin{figure}[!h]
\begin{center} 
\begin{tabular}{cccc}
\includegraphics[keepaspectratio=true,width=3cm]{./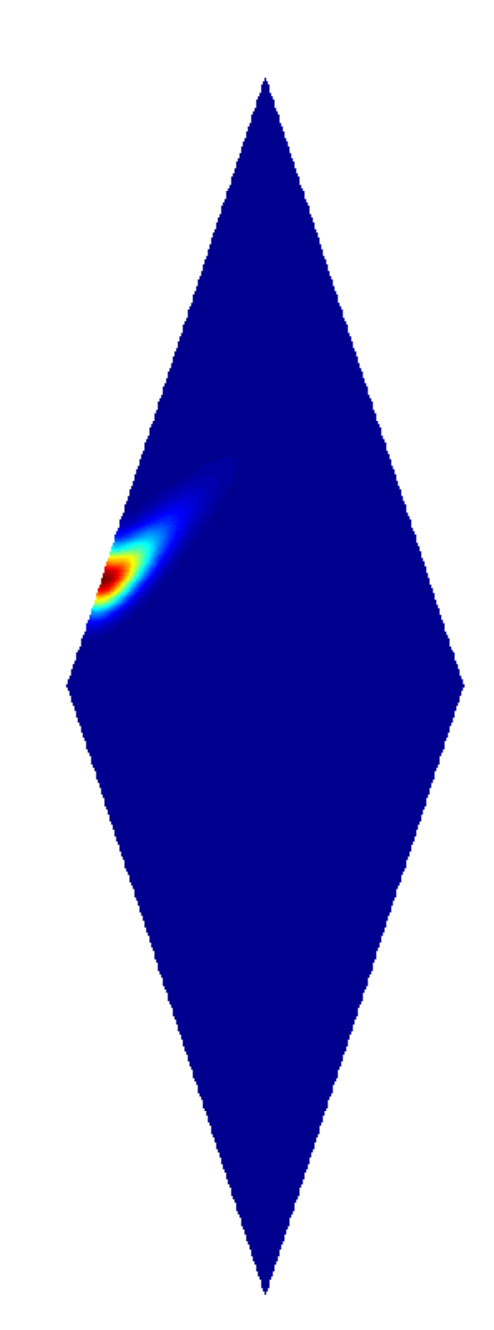}
&
\includegraphics[keepaspectratio=true,width=3cm]{./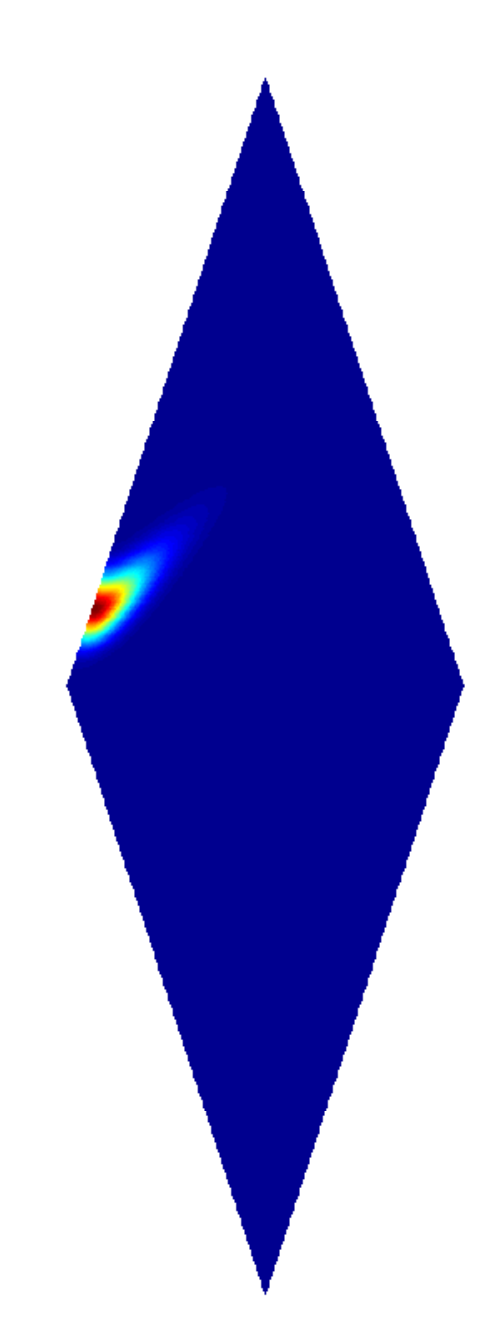} 
  &
\includegraphics[keepaspectratio=true,width=3cm]{./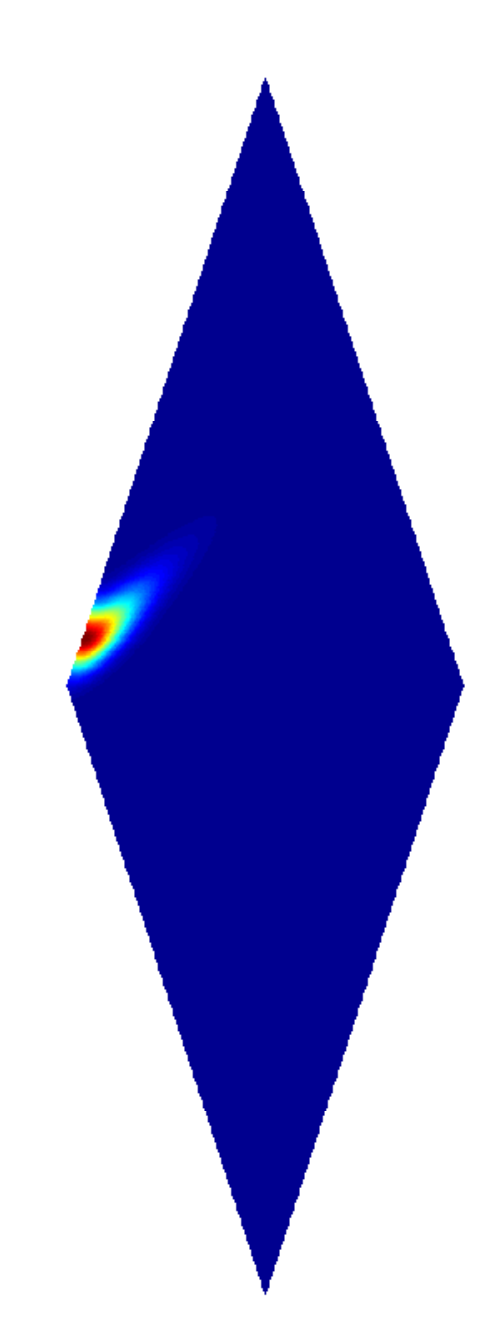}
&
\includegraphics[keepaspectratio=true,width=3cm]{./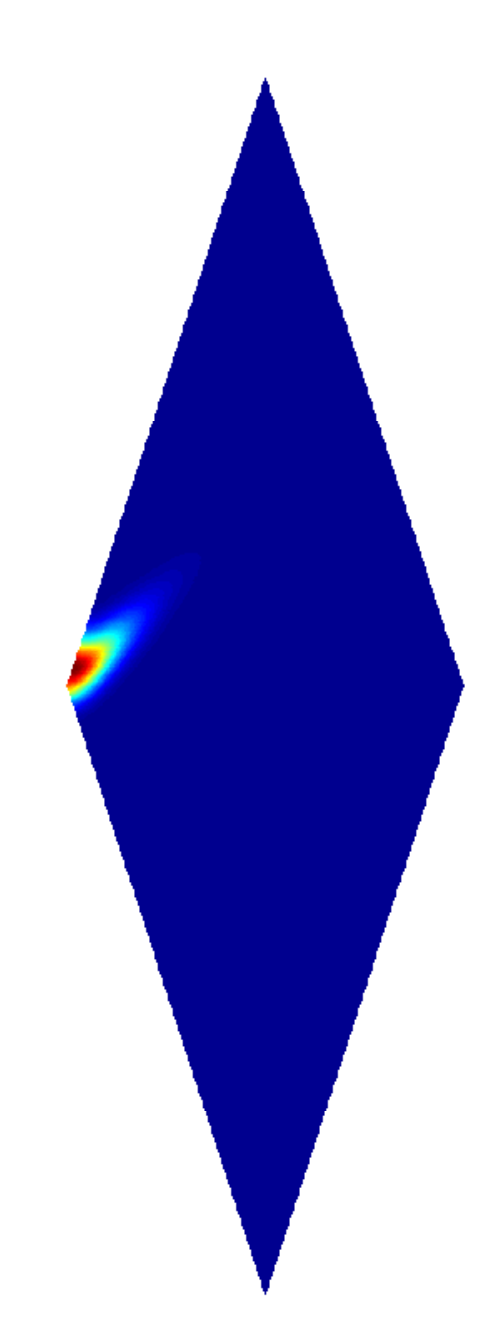} 
\\
$\tau=-3$ & $\tau=-2$ & $\tau=-1$ & $\tau=0$
\\
\includegraphics[keepaspectratio=true,width=3cm]{./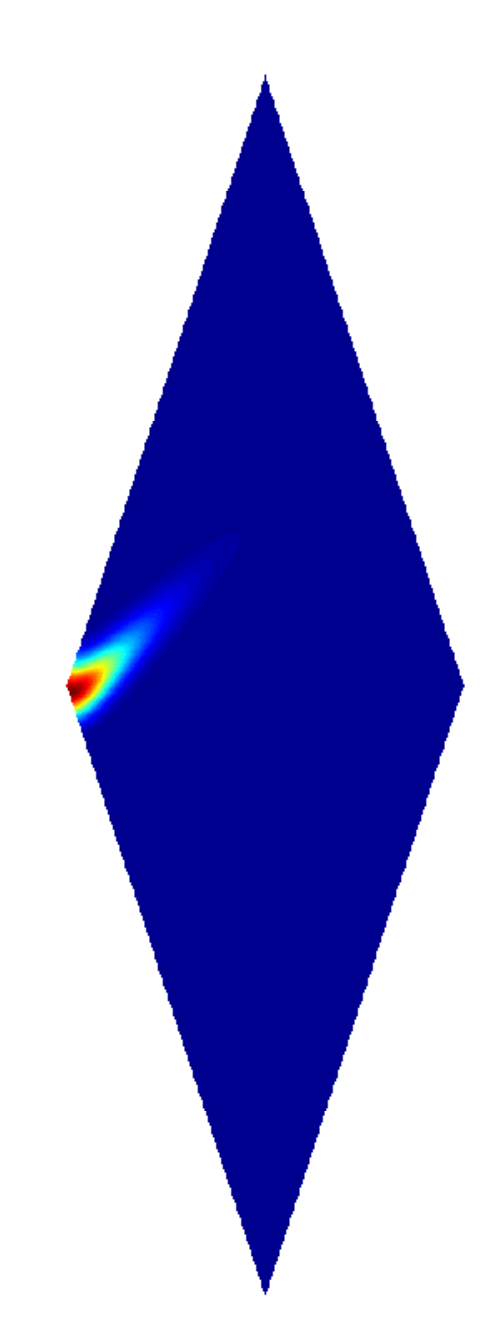}
&
\includegraphics[keepaspectratio=true,width=3cm]{.//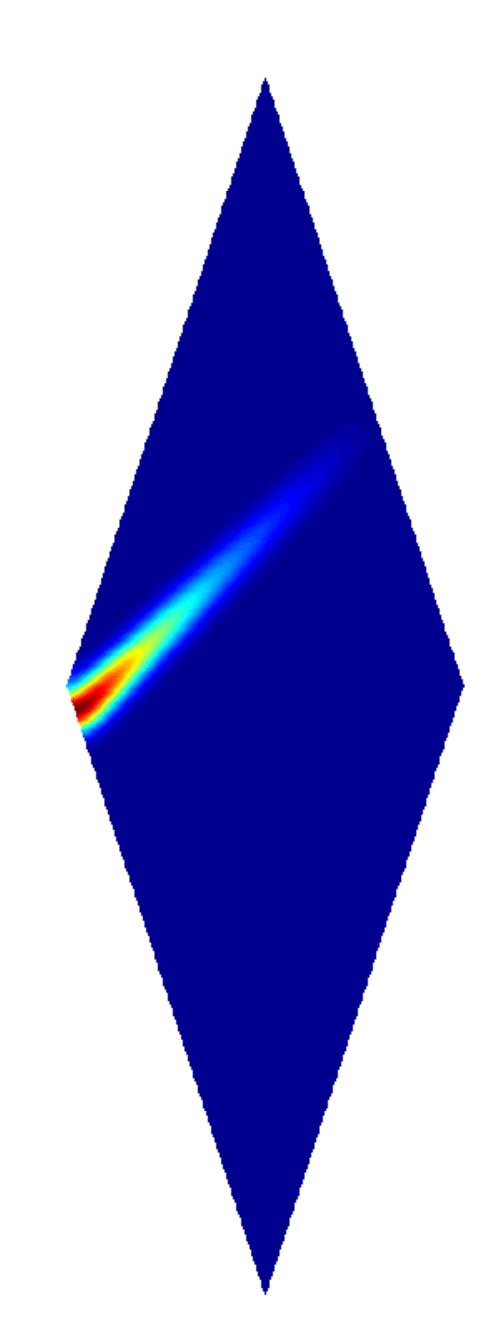}
&
\includegraphics[keepaspectratio=true,width=3cm]{./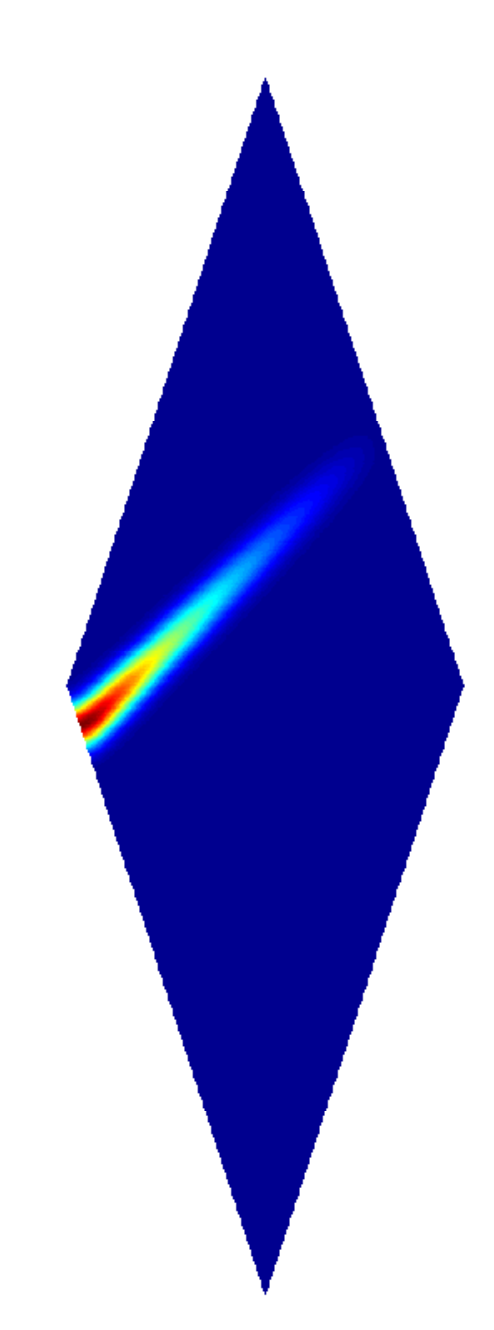}
&
\includegraphics[keepaspectratio=true,width=3cm]{./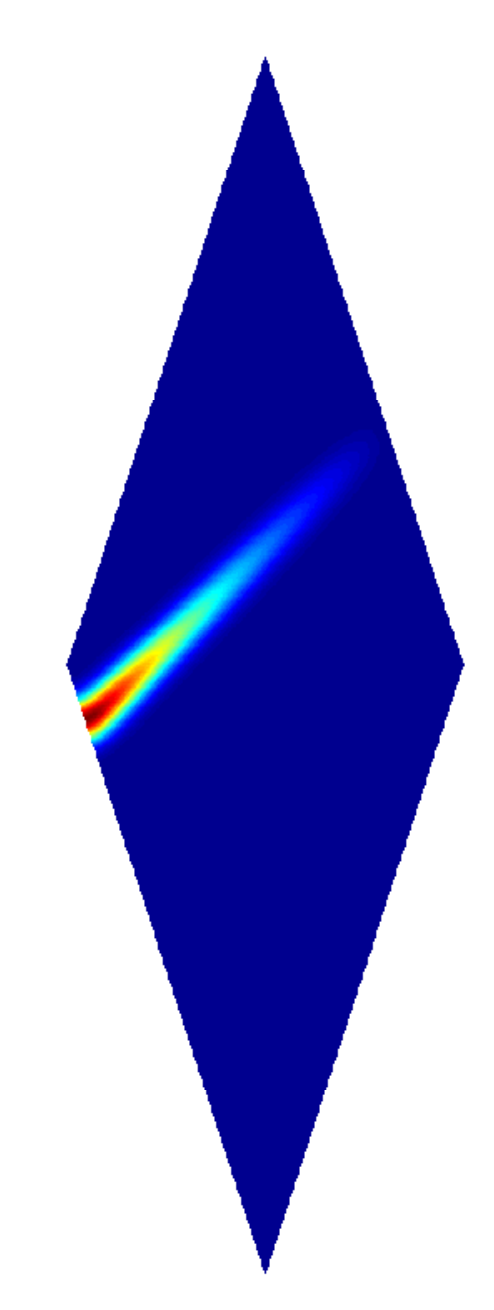}
\\
$\tau=1$ & $\tau=2$ & $\tau=3$ & $\tau=4$
\end{tabular}
\caption{Magnetic field: $\bB=(\frac{1}{\sqrt{2}},\frac{1}{\sqrt{2}},0)$. Opening angle: $\alpha=\frac{4\pi}{5}$. Normalized Eigenvectors of $H(\Add, \cS_{\alpha})+V_{\uB}^{\tau}$ associated to $\sse(\bB,\cS_{\alpha};\tau)$. From top to bottom and left to right: $\tau=k$, $-3 \leq k \leq 4$. Computational domain: $\Los(20,\frac{4\pi}{5})$.}
\label{F2}
\end{center}
\end{figure}

\newpage

\begin{figure}[ht!]
\begin{center}
\includegraphics[keepaspectratio=true,width=17cm]{./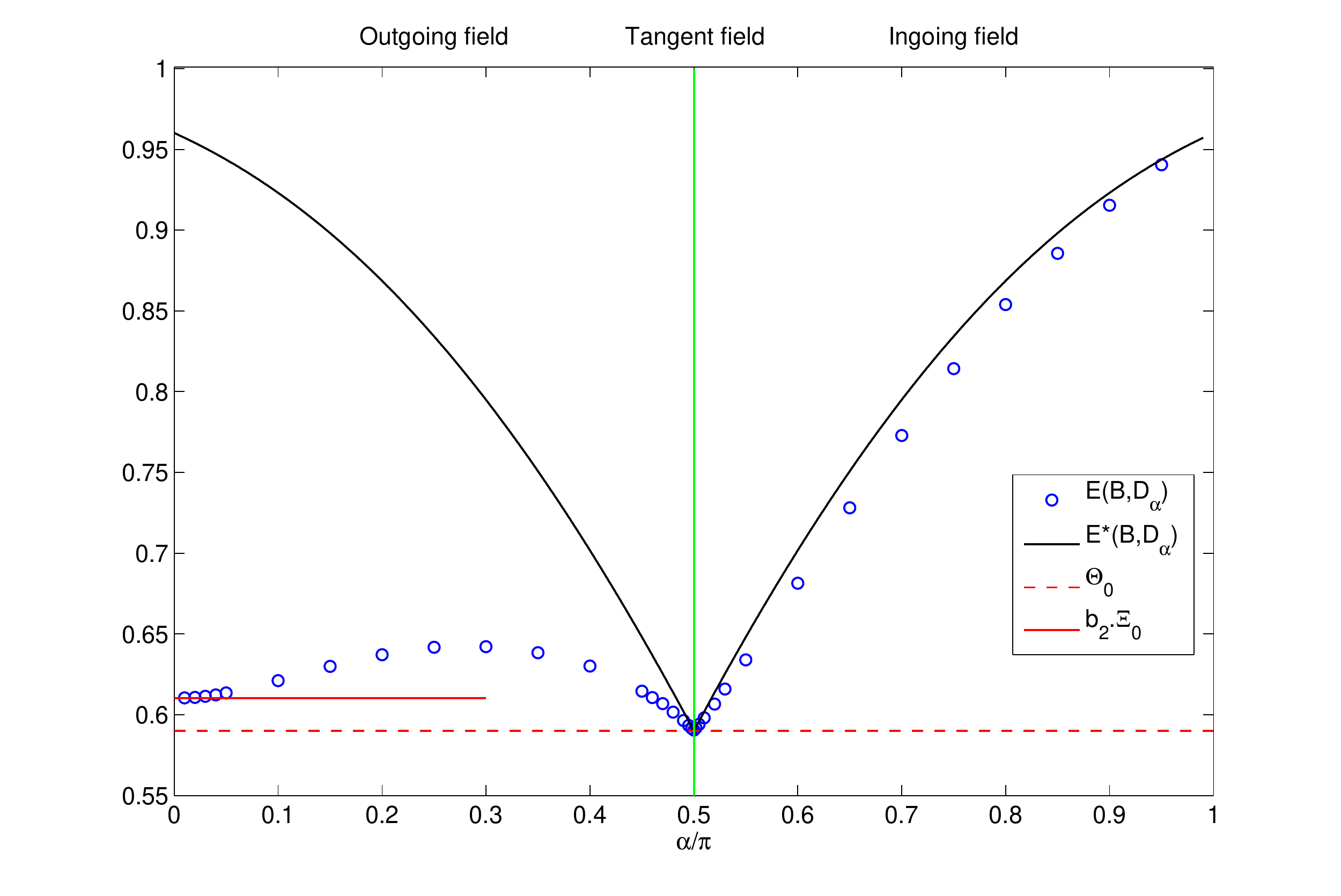}
\caption{Magnetic field: $\bB=(\frac{1}{\sqrt{2}},\frac{1}{\sqrt{2}},0)$. The numerical approximation of $E(\bB,\cW_{\alpha})$ versus $\frac{\alpha}{\pi}$ compared with $E^{*}(\bB,\cW_{\alpha})$, $b_{2}\Xi_{0}$ and $\Theta_{0}$.}
\label{F3}
\end{center}
\end{figure}
On figure \ref{F3} we show numerical approximations of $E(\bB,\cW_{\alpha})$. For each value of $\alpha$ we compute $\sse(\bB,\cS_{\alpha};\tau)$ for several values of $\tau$ and we define 
$$\breve{E}(\bB,\cW_{\alpha}):=\inf_{\tau}\breve{\sse}(\bB,\cS_{\alpha};\tau)$$
a numerical approximation of $E(\bB,\cW_{\alpha})$. The magnetic field is outgoing when $\alpha\in(0,\frac{\pi}{2})$, ingoing when $\alpha\in(\frac{\pi}{2},\pi)$ and tangent when $\alpha=\frac{\pi}{2}$. We notice that $\breve{E}(\bB,\cW_{\alpha})$ seems to converge to $b_{2}\Xi_{0}$ (see Subsection \ref{SS:upperboundsmallangles}). We have also plotted $E^{*}(\bB,\cW_{\alpha})$ according to \eqref{E:lambdastarexplicit} and to the numerical values of $\sigma(\cdot)$ coming from \cite{BoDauPopRay12}. We see that for $\alpha\neq\frac{\pi}{2}$, we have $\breve{E}(\bB,\cW_{\alpha})<E^{*}(\bB,\cW_{\alpha})$ whereas $\breve{E}(\bB,\cW_{\frac{\pi}{2}})\approx \Theta_{0}=E^{*}(\bB,\cW_{\frac{\pi}{2}})$. Let us also notice that $\alpha\mapsto E(\bB,\cW_{\alpha})$ seems not to be $\mathcal{C}^{1}$ in $\alpha=\frac{\pi}{2}$.

\paragraph{Aknowledgement}
Some of this work is part of a PhD. thesis. The author would like to thank M. Dauge and V. Bonnaillie-No\"el for sharing lot of ideas and for useful discussions. He is also grateful to M. Dauge for careful reading and suggestions.

\newpage
\newpage
\bibliographystyle{mnachrn}

\end{document}